\definecolor{CCD}{RGB}{20, 200, 20}
\tikzstyle{nodo}=[circle,draw,fill,inner sep=0pt, minimum size=0.5*width("k")]
\tikzstyle{infinito}=[circle,inner sep=0pt,minimum size=0mm]
\tikzstyle{nodino}=[circle,draw,fill,inner sep=0pt]
\definecolor{lightred}{rgb}{1,0.9,0.9}
\definecolor{lightgreen}{rgb}{0.8,1,0.8}
\newtheorem{theorem}{Theorem}[section]
\newtheorem{lemma}[theorem]{Lemma}
\newtheorem{proposition}[theorem]{Proposition}
\newtheorem{definition}[theorem]{Definition}
\newcommand\ppp{\texttt p}
\theoremstyle{remark}
\newtheorem{remark}[theorem]{Remark}
\numberwithin{equation}{section}
\title[ ]{Constant sign and sign changing NLS ground states \\ on noncompact metric graphs}
\author[ ]{Colette De Coster, Simone Dovetta, Damien Galant, \\ 
Enrico Serra,  Christophe Troestler}
\address[C. De Coster, D. Galant]{Universit\'e Polytechnique Hauts-de-France, INSA Hauts-de-France, CERAMATHS - Laboratoire de
	 Mat\'eriaux C\'eramique et de Math\'ematiques, F-59313 Valenciennes, France}
\address[S. Dovetta, E. Serra]{Diparimento di Scienze Matematiche, Politecnico di Torino, Italy}
\address[D. Galant, C. Troestler]{D\'epartment de Math\'ematique, Universit\'e de Mons, Place du Parc, 20,
B-7000 Mons, Belgium}
\address[D. Galant]{F.R.S.-FNRS Research Fellow}
\newcommand\N{\mathbb{N}}
\newcommand\R{\mathbb{R}}
\newcommand\Z{\mathbb{Z}}
\newcommand\C{\mathcal{C}}
\newcommand\B{\mathcal{B}}
\newcommand\G{\mathcal{G}}
\newcommand\K{\mathcal{K}}
\newcommand\E{\mathbb{E}}
\newcommand\V{\mathbb{V}}
\newcommand\vv{\textsc{v}}
\newcommand\eps{\varepsilon}
\newcommand\NN{\mathcal{N}}
\newcommand\MM{\mathcal{M}}
\newcommand\pr{\pi_\lambda}
\newcommand\wG{\widetilde \G}
\newcommand\we{\widetilde e}
\newcommand\wu{\widetilde u}
\newcommand\dx{\,{\operator@font d}x}
\DeclareMathOperator{\supp}{supp}
\DeclareMathOperator{\diam}{diam}
\begin{document}

\begin{abstract} We investigate existence and nonexistence of action ground states and nodal action ground states for the nonlinear Schr\"odinger equation on noncompact metric graphs with rather general boundary conditions. We first obtain abstract sufficient conditions for existence, typical of problems with lack of compactness, in terms of ``levels at infinity'' for the action functional associated with the problems. Then we analyze in detail two relevant classes of graphs. For noncompact graphs with finitely many edges, we detect purely topological sharp conditions preventing the existence of ground states or of nodal ground states. We also investigate analogous conditions of metrical nature. The negative results are complemented  by several sufficient conditions to ensure existence, either of topological  or metrical nature, or a combination of the two. For graphs with infinitely many edges, all bounded, we focus on periodic graphs and infinite trees. In these cases, our results completely describe the phenomenology. 
Furthermore, we study nodal domains and nodal sets of nodal ground states and we show that the situation on graphs can be totally different from that on domains of $\mathbb R^N$.
\end{abstract}

\maketitle
	
\noindent{\small AMS Subject Classification: 35R02, 35Q55, 49J40, 58E30.}
\smallskip

\noindent{\small Keywords: nonlinear Schr\"odinger, ground states, nodal solutions, least action, constrained minimization}

\section{Introduction}
\label{sec:intro}
	
In this paper we investigate the existence of constant sign and sign changing solutions of the nonlinear Schr\"odinger equation
\begin{equation}
\label{eq:NLS}
u'' + |u|^{p-2}u = \lambda u,
\end{equation}
where $p > 2$ and $\lambda$ is a real parameter, on noncompact metric graphs under various assumptions.

Throughout this paper we consider the class {\bf G} of connected metric graphs $\G = (\V, \E)$ where the sets $\V$ and $\E$ are at most countable, every vertex $\vv \in \V$ has a finite degree, and the lengths of the edges $e \in \E$ are bounded away from zero (see Definition \ref{Def 2.1} below). A graph of this type is noncompact if at least one edge is unbounded (i.e.\ it is a half-line) or if the number of edges is infinite, giving rise to two classes of graphs that behave quite differently and that we will treat separately. Note that every half-line is considered to end at a ``vertex at infinity'' of degree one. The set of all such vertices of $\V$ is denoted by $\V_{\infty}$.

The analysis of differential equations on metric graphs experienced a massive growth in recent years, in particular motivated by the potential of graphs to serve as simple models for signal propagation in branched structures. In this context, nonlinear Schr\"odinger equations as \eqref{eq:NLS} gained a prominent interest in view of their role as effective equations for quantum mechanical systems as e.g.\ Bose--Einstein condensates.

 It is well known, however, that, for  dynamics on graphs to be well--defined, together with an equation as \eqref{eq:NLS} governing the evolution in the interior of the edges, one needs to prescribe matching conditions at the vertices that describe how junctions affect the transmission. 
In the case of nonlinear Schr\"odinger equations, there is a wide class of vertex conditions that can be considered, essentially any ensuring that the associated differential operator be self--adjoint (see \cite{ABR} for a comprehensive overview on this point). In the present paper, we couple equation \eqref{eq:NLS} with rather general boundary conditions.
Given a (not necessarily finite) set $Z \subseteq \V\setminus \V_{\infty}$ of
degree 1 vertices, we are interested in solutions to the problem
\begin{equation}
\label{NLSdk}
\begin{cases}
u'' + |u|^{p-2}u = \lambda u	&\text{ on every edge of $\G$},\\
u \text{ is continuous} & \text{ on } \G,\\
\displaystyle\sum_{e\succ\vv} u_e'(\vv)=0	&\text{ for every $\vv \in \V \setminus (Z\cup \V_{\infty})$},\\
u(\vv) = 0					&\text{ for every $\vv \in Z$},
\end{cases}
\end{equation}
where $u_e'(\vv)$ is the outgoing derivative along the edge $e$ incident at the vertex $\vv$ and  $e\succ \vv$ means that the sum is extended to all such edges. The boundary condition for $\vv\notin Z$ (together with the continuity of $u$) is the homogeneous Kirchhoff condition, by far the most used in the literature.  The boundary condition for $\vv \in Z$ is the homogeneous Dirichlet condition, which by contrast  has been discussed only by few papers (see for instance \cite{esteban}). Here we choose to include mixed conditions to highlight their role in the existence (or nonexistence) of various types of solutions to  \eqref{NLSdk}. The requirement that all nodes of $Z$ have degree 1 prevents that the graph be
disconnected by the Dirichlet conditions, but more general frameworks can easily be treated building on the results of this paper.

Solutions to \eqref{NLSdk} can be found by a variational approach that has been employed very frequently to deal with this kind of problem or with its variants. 
In our setting, the appropriate function space to set problem \eqref{NLSdk} is
\begin{equation*}
H^1_Z(\G) := \bigl\{ u \in H^1(\G) \bigm|  u(\vv) = 0\, \text{ for every } \vv \in Z \bigr\}.
\end{equation*}
Standard arguments show that
the $H^1(\G)$ solutions of problem \eqref{NLSdk} are exactly the critical points of the {\em action} functional $J_\lambda: H^1_Z(\G) \to \R$ defined by
\begin{equation}
\label{J}
J_\lambda(u):= \frac{1}{2} \|u'\|_{L^2(\G)}^2 + \frac{\lambda}{2} \|u\|_{L^2(\G)}^2- \frac{1}{p} \|u\|_{L^p(\G)}^p,
\end{equation}
that is of class $\C^2$ on  $H^1_Z(\G)$. 
Hereafter the parameter  $\lambda$ satisfies as usual $\lambda>-\omega_Z(\G)$, where
\begin{equation*}
  \omega_{Z}(\G):=\inf_{v\in H_Z^1(\G) \setminus\{0\}} \frac{\|v'\|_{L^2(\G)}^2}{\|v\|_{L^2(\G)}^2}
\end{equation*}
denotes the bottom of the spectrum of the Laplacian on $\G$ associated to the boundary conditions in \eqref{NLSdk}.

When looking at solutions to \eqref{NLSdk} from the variational perspective, the natural first step is to search for minimizers of \eqref{J}. Since we want to discuss both one-signed and sign changing solutions, we will consider here two classes of minimizers. The first class is that of so-called {\em ground states}, i.e.\ global minimizers of the action among all functions in a suitable subset of $H_Z^1(\G)$. The second class is given by the {\em nodal ground states}, roughly the analogue of ground states among sign changing functions. 

It is well-known that, whenever they exist, ground states provide constant sign solutions of \eqref{eq:NLS} of minimal action. 
Actually, to look for one-signed solutions of nonlinear Schr\"odinger equations as minimizers of suitable functionals is a standard 
strategy that has been widely exploited on graphs in the mass constrained setting, where ground states are defined as minimizers 
of the energy functional restricted to an $L^2$-sphere (see e.g.~\cite{ABD, ACFN, ADST, AST1,AST2, BMP, BC, BDL20, BDL21, BD1, BD2,DT, KMPX, KNP,PS20,PSV,T-JMAA}). Despite being similar, 
the action approach has not received much attention so far (some results in this direction can be found e.g.\
 in \cite{acfn_jde,DDGS,KS, pankov}). In addition, to the best of our knowledge, nodal ground states, and more generally sign changing 
solutions, on general metric graphs have never been investigated up to now, neither for the action nor for the energy problem.
The main concern of the present paper is thus to begin a systematic study of action ground states and nodal ground states for 
\eqref{NLSdk}, characterizing the dependence of the problem on topological and metrical properties of the graph. 

In talking about least action solutions, one has to take into account that the functional $J_\lambda$ is not bounded from below. A standard way to recover the notion of minimality is to introduce the Nehari manifold associated to $J_\lambda$, defined by
\begin{align*}
\mathcal{N}_{\lambda,Z}(\G) &:=  \bigl\{ u \in H_Z^1(\G) \bigm| u \ne 0,\, J_\lambda'(u)u= 0\bigr\} \\
&=  \Bigl\{ u \in H_Z^1(\G) \Bigm| u \ne 0,\, \|u'\|_{L^2(\G)}^2 + \lambda \|u\|_{L^2(\G)}^2 = \|u\|_{L^p(\G)}^p\Bigr\}.
\end{align*}
Clearly, $\mathcal{N}_{\lambda,Z}(\G)$ contains all solutions to \eqref{NLSdk}. It is a ${\mathcal C}^1$-manifold diffeomorphic to the unit sphere of $H^1_Z(\G)$ and is a natural constraint for $J_\lambda$, in the sense that constrained critical points of $J_\lambda$ are in fact true critical points.
Other approaches are possible (for instance one could constrain $J_\lambda$ on the unit sphere of $L^p(\G)$), but the Nehari approach has the advantage that it works also in cases where the nonlinearity is not homogeneous, thus providing a framework suitable to be generalized to a wider class of nonlinear terms.

The notion of ground states we consider here can then be made precise as follows.
\begin{definition}
\label{gs} We say that a function $u\in \NN_{\lambda,Z}(\G)$ is a \emph{ground state} for problem \eqref{NLSdk} if
\[
J_\lambda(u) = \inf_{v \in \NN_{\lambda,Z}(\G)} J_\lambda(v).
\]
\end{definition}

In the prescribed mass problem, where one minimizes the energy functional on spheres of $L^2(\G)$, the aforementioned literature has shown that the existence of the (similarly defined) ground states on noncompact graphs is a rather unlikely event. Obstructions to existence are provided mostly by the topology of the graph, and sometimes also by its metrical properties. In this paper we will show that the same is true for action ground states and that the presence of the set $Z$, where Dirichlet conditions are imposed, makes it even harder for a graph to host such states.

To define rigorously the second class of solutions we seek, we let
\[
u^+ := \max(u, 0), \qquad u^- = \min(u, 0)
\]
and define the \emph{nodal Nehari set} as
\[
\MM_{\lambda,Z}(\G) := \bigl\{ u \in H^1_Z(\G) \bigm| u^{\pm} \in \mathcal{N}_{\lambda,Z}(\G) \bigr\}
		= \bigl\{ u \in H^1_Z(\G) \bigm| u^{\pm} \ne 0,\, J_\lambda'(u)u^{\pm} = 0 \bigr\}.
\]
The nodal Nehari set contains all nodal solutions of \eqref{NLSdk} but, contrary to $\NN_{\lambda,Z}(\G)$, in general is not a manifold (see e.g.~\cite{BW, castro_cossio_neuberger, SW})  and is not a natural constraint for $J_\lambda$, which causes some extra difficulties when proving existence results.

\begin{definition}
\label{ngs}
We say that a function $u\in \MM_{\lambda,Z}(\G)$ is a nodal ground state for problem \eqref{NLSdk} if
\[
J_\lambda(u) = \inf_{v \in \MM_{\lambda,Z}(\G)} J_\lambda(v).
\]
\end{definition}

If $\G$ is compact, the existence of a ground state and of a nodal ground state can be proved 
via standard compactness arguments as in the papers quoted above.

If $\G$ is noncompact, since as we said above the existence of ground states is unlikely, it is not surprising that the analogous eventuality for nodal ground states  is even more so. As for ground states, we are going to derive sufficient conditions for both existence and nonexistence of nodal ground states involving topological features, metrical ones and combinations of the two.

\smallskip
Our analysis is based on a rather abstract procedure, typical of problems with lack of compactness, consisting in locating  thresholds on the levels of  $J_\lambda$, involving the so--called level at infinity 
\[
J_\lambda^{\infty}(\G; Z)
:= \inf \Bigl\{ \liminf_n J_\lambda(u_n) \Bigm|
(u_n)_n \subseteq \mathcal{N}_{\lambda,Z}(\G),\ \lim_n u_n = 0 \text{ weakly in } H^1_Z(\G) \Bigr\}.
\]

\begin{theorem}
\label{groundandnodal}
Let  $\G\in {\bf G}$ be a noncompact graph and $\lambda>-\omega_Z(\G)$.
\begin{itemize}
\item[(i)] If 
\begin{equation}
\label{levelN}
\inf_{v \in \mathcal{N}_{\lambda,Z}(\G)} J_\lambda(v) < J_\lambda^\infty(\G; Z),
\end{equation}
then there exists a ground state of $J_\lambda$ in $\NN_{\lambda,Z}(\G)$.
Moreover,  ground states have constant sign on $\G\setminus Z$.
\item[(ii)]
If
\begin{equation}
\label{levelM}
\inf_{v \in \mathcal{M}_{\lambda,Z}(\G)} J_\lambda(v) < J_\lambda^{\infty}(\G; Z) + \inf_{v \in \mathcal{N}_{\lambda,Z}(\G)} J_\lambda(v),
\end{equation}
then there exists a nodal ground state of $J_\lambda$ in $\MM_{\lambda,Z}(\G)$.
\end{itemize}
\end{theorem}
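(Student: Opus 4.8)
The plan is to obtain a ground state (resp.\ a nodal ground state) by direct minimization of $J_\lambda$ over $\NN_{\lambda,Z}(\G)$ (resp.\ over $\MM_{\lambda,Z}(\G)$), the only compactness issue being the escape of mass at infinity, which the hypotheses \eqref{levelN}--\eqref{levelM} keep below the level $J_\lambda^\infty(\G;Z)$. Throughout, abbreviate $Q(u):=\|u'\|_{L^2(\G)}^2+\lambda\|u\|_{L^2(\G)}^2$. By definition of $\omega_Z(\G)$ and the hypothesis $\lambda>-\omega_Z(\G)$ there is $c_\lambda>0$ with $Q(u)\ge c_\lambda\|u\|_{H^1(\G)}^2$, so $Q$ is equivalent to the square of the $H^1_Z(\G)$-norm; this is the only place where the bound on $\lambda$ enters, and it covers the limit case $\omega_Z(\G)=0$, in which necessarily $\lambda>0$. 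On $\NN_{\lambda,Z}(\G)$ one has $J_\lambda(u)=(\tfrac12-\tfrac1p)Q(u)=(\tfrac12-\tfrac1p)\|u\|_{L^p(\G)}^p$, and on $\MM_{\lambda,Z}(\G)$ additionally $J_\lambda(u)=J_\lambda(u^+)+J_\lambda(u^-)$ with $u^\pm\in\NN_{\lambda,Z}(\G)$; hence $\mathfrak l:=\inf_{\NN_{\lambda,Z}(\G)}J_\lambda>0$ and $\mathfrak m:=\inf_{\MM_{\lambda,Z}(\G)}J_\lambda\ge2\mathfrak l>0$ (with $\MM_{\lambda,Z}(\G)\ne\emptyset$, so $\mathfrak m<\infty$), and every minimizing sequence for either problem is bounded in $H^1_Z(\G)$. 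Finally I shall use freely that a bounded sequence in $H^1_Z(\G)$ has a subsequence converging weakly in $H^1_Z(\G)$ and locally uniformly on $\G$ (hence in $L^q_{\mathrm{loc}}$ for every $q$, and a.e.): every metric ball of $\G$ is contained in a finite, hence compact, subgraph, on which $H^1\hookrightarrow C$ compactly.

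For part (i), take a minimizing sequence $(u_n)_n\subseteq\NN_{\lambda,Z}(\G)$ and, up to a subsequence, let $u_n\weak u$ with $u_n\to u$ locally uniformly. If $u=0$, then $(u_n)_n$ is admissible in the definition of $J_\lambda^\infty(\G;Z)$, so $\mathfrak l=\lim_nJ_\lambda(u_n)\ge J_\lambda^\infty(\G;Z)$, contradicting \eqref{levelN}; hence $u\ne0$. With $v_n:=u_n-u\weak0$, the Hilbert-space identity for $Q$ and the Brezis--Lieb lemma for the $L^p$-norm give $Q(u_n)=Q(u)+Q(v_n)+o(1)$ and $\|u_n\|_{L^p(\G)}^p=\|u\|_{L^p(\G)}^p+\|v_n\|_{L^p(\G)}^p+o(1)$; since $Q(u_n)=\|u_n\|_{L^p(\G)}^p$, this yields $Q(u)-\|u\|_{L^p(\G)}^p=\|v_n\|_{L^p(\G)}^p-Q(v_n)+o(1)$. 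The crucial step is the projection inequality $Q(u)\le\|u\|_{L^p(\G)}^p$. If it failed, then along a further subsequence with $Q(v_n)\to Q_\infty$ and $\|v_n\|_{L^p(\G)}^p\to\gamma$ we would get $\gamma-Q_\infty=Q(u)-\|u\|_{L^p(\G)}^p>0$; coercivity of $Q$ excludes $Q_\infty=0$ (which would force $\gamma=0$), so $\gamma>Q_\infty>0$, and the rescaled residual $t_nv_n$, with $t_n^{p-2}=Q(v_n)/\|v_n\|_{L^p(\G)}^p\to Q_\infty/\gamma<1$, lies in $\NN_{\lambda,Z}(\G)$ for large $n$ and satisfies $t_nv_n\weak0$, so $J_\lambda^\infty(\G;Z)\le\liminf_nJ_\lambda(t_nv_n)<(\tfrac12-\tfrac1p)\gamma<(\tfrac12-\tfrac1p)(\|u\|_{L^p(\G)}^p+\gamma)=\mathfrak l$, contradicting \eqref{levelN}. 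Therefore $Q(u)\le\|u\|_{L^p(\G)}^p$, and the Nehari rescaling $\tilde u:=t_uu$ with $t_u^{p-2}=Q(u)/\|u\|_{L^p(\G)}^p\in(0,1]$ satisfies $\tilde u\in\NN_{\lambda,Z}(\G)$ and $J_\lambda(\tilde u)=(\tfrac12-\tfrac1p)t_u^p\|u\|_{L^p(\G)}^p\le(\tfrac12-\tfrac1p)\|u\|_{L^p(\G)}^p\le\mathfrak l$; as $\tilde u\in\NN_{\lambda,Z}(\G)$ forces $J_\lambda(\tilde u)\ge\mathfrak l$, it is a ground state. For the sign claim, if $u$ is any ground state then so is $|u|$ (same $Q$ and $L^p$-norm, still in $\NN_{\lambda,Z}(\G)$), hence, $\NN_{\lambda,Z}(\G)$ being a natural constraint, $|u|$ is a nonnegative solution of \eqref{NLSdk}; a Cauchy-uniqueness argument on each edge (a nonnegative solution vanishing at an interior point vanishes on the whole edge) together with the Kirchhoff conditions (at a vertex where it vanishes, all outgoing derivatives are $\ge0$ and sum to $0$, hence vanish, so it vanishes on every incident edge) propagates, by connectedness of $\G$, to $|u|\equiv0$ whenever $|u|$ vanishes somewhere on $\G\setminus Z$; since $u\ne0$ this is impossible, so $|u|>0$ on the connected set $\G\setminus Z$ and $u$ has constant sign there.

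For part (ii), note that since $\MM_{\lambda,Z}(\G)$ is not a natural constraint we only claim, as in Definition~\ref{ngs}, that $\mathfrak m$ is attained, which is all the minimization scheme has to deliver. Take a minimizing sequence $(u_n)_n\subseteq\MM_{\lambda,Z}(\G)$; then $u_n^\pm\in\NN_{\lambda,Z}(\G)$, and along a subsequence $u_n^\pm\weak u^\pm$ locally uniformly, with $u^+\ge0\ge u^-$ and $u^+u^-=0$. If $u^+=0$ (resp.\ $u^-=0$), then $(u_n^+)_n$ (resp.\ $(u_n^-)_n$) is admissible in the definition of $J_\lambda^\infty(\G;Z)$ while the complementary sequence lies in $\NN_{\lambda,Z}(\G)$, so $\mathfrak m=\lim_nJ_\lambda(u_n)\ge\liminf_nJ_\lambda(u_n^+)+\liminf_nJ_\lambda(u_n^-)\ge J_\lambda^\infty(\G;Z)+\mathfrak l$, contradicting \eqref{levelM}; hence $u^+\ne0$ and $u^-\ne0$. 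Applying the splitting of part (i) to $(u_n^+)_n$ and to $(u_n^-)_n$ separately, and arguing as there, one shows that if $Q(u^+)>\|u^+\|_{L^p(\G)}^p$ then, using $\lim_n\|u_n^-\|_{L^p(\G)}^p\ge\mathfrak l/(\tfrac12-\tfrac1p)$ (valid since $u_n^-\in\NN_{\lambda,Z}(\G)$), one gets $\mathfrak m>(\tfrac12-\tfrac1p)\|u^+\|_{L^p(\G)}^p+J_\lambda^\infty(\G;Z)+\mathfrak l>J_\lambda^\infty(\G;Z)+\mathfrak l$, against \eqref{levelM}, and symmetrically for $u^-$; hence $Q(u^+)\le\|u^+\|_{L^p(\G)}^p$ and $Q(u^-)\le\|u^-\|_{L^p(\G)}^p$. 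Since $u^+$, $u^-$ and their derivatives are supported on disjoint sets, $Q$ and $\|\cdot\|_{L^p(\G)}^p$ split over $u^+$ and $u^-$, so the independent rescalings $\tau^\pm:=\big(Q(u^\pm)/\|u^\pm\|_{L^p(\G)}^p\big)^{1/(p-2)}\in(0,1]$ give $w:=\tau^+u^++\tau^-u^-$ with $w^\pm=\tau^\pm u^\pm\in\NN_{\lambda,Z}(\G)$, i.e.\ $w\in\MM_{\lambda,Z}(\G)$, and $J_\lambda(w)=(\tfrac12-\tfrac1p)\big((\tau^+)^p\|u^+\|_{L^p(\G)}^p+(\tau^-)^p\|u^-\|_{L^p(\G)}^p\big)\le(\tfrac12-\tfrac1p)\|u\|_{L^p(\G)}^p\le\mathfrak m$; since $w\in\MM_{\lambda,Z}(\G)$ forces the reverse inequality, $w$ is a nodal ground state.

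The main obstacle is precisely the projection inequalities $Q(u)\le\|u\|_{L^p(\G)}^p$ and $Q(u^\pm)\le\|u^\pm\|_{L^p(\G)}^p$: these are where the thresholds \eqref{levelN}, \eqref{levelM} are genuinely exploited, through the comparison of the rescaled residual with $J_\lambda^\infty(\G;Z)$, and they require some care in the borderline subcase $Q(u_n-u)\to0$, which is ruled out by the coercivity of $Q$, hence by $\lambda>-\omega_Z(\G)$. The remaining ingredients — the two Brezis--Lieb decompositions, the elementary one-dimensional Nehari rescaling, the splitting of $Q$ and $\|\cdot\|_{L^p(\G)}^p$ over $u^\pm$ underlying the decomposition $u=u^++u^-$, and the maximum-principle argument for the sign — are standard.
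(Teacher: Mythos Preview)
Your argument is correct as a proof of the theorem as stated, but it follows a genuinely different route from the paper's. The paper does not work with an arbitrary minimizing sequence: it first builds, via an exhaustion of $\G$ by compact subgraphs $\K_n$ (Remark~\ref{approx}), a \emph{special} minimizing sequence $(u_n)_n$ consisting of ground states (resp.\ nodal ground states) on $\K_n$ with Dirichlet data on $\partial\K_n$ (Propositions~\ref{weaklim} and~\ref{prop:compactness_lans}). Each $u_n$ is thus already a critical point, so the weak limit $u$ satisfies $J_\lambda'(u)=0$; the level hypotheses \eqref{levelN}, \eqref{levelM} are then used only to exclude $u\equiv0$ (resp.\ $u^\pm\equiv0$), after which $u\in\NN_Z$ (resp.\ $u\in\MM_Z$) follows automatically from $J_\lambda'(u)=0$, and weak lower semicontinuity of $\|\cdot\|_p^p$ finishes the job.

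Your approach is more elementary in that it avoids the compact exhaustion entirely and treats any minimizing sequence via the Brezis--Lieb splitting plus the projection inequality $Q(u)\le\|u\|_p^p$ (resp.\ $Q(u^\pm)\le\|u^\pm\|_p^p$); this is a nice, self-contained concentration-compactness style argument. The trade-off concerns part~(ii): the paper's construction produces a nodal ground state that is \emph{automatically} a solution of \eqref{NLSdk}, whereas your minimizer $w=\tau^+u^++\tau^-u^-$ is, a priori, only a minimizer on $\MM_Z$, and you correctly flag that $\MM_Z$ is not a natural constraint. This is enough for Definition~\ref{ngs} and hence for the theorem, but the stronger information that the nodal ground state solves the equation is used later in the paper (e.g.\ in Theorem~\ref{nodaldom}), which is why the authors invest in the special minimizing sequence.
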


\begin{theorem}
\label{nonodal}
For every noncompact graph $\G\in {\bf G}$ and $\lambda>-\omega_Z(\G)$,
\begin{equation}
\label{M2N}
\inf_{v\in\MM_{\lambda,Z}(\G)} J_\lambda(v) \ge 2 \inf_{v\in\NN_{\lambda,Z}(\G)} J_\lambda(v).
\end{equation}
If equality holds, then $\G$ admits no nodal ground states of $J_\lambda$ in $\MM_{\lambda,Z}(\G)$.
\end{theorem}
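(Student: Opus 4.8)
The plan is to decompose $J_\lambda$ along positive and negative parts. First I would record the identity $J_\lambda(u) = J_\lambda(u^+) + J_\lambda(u^-)$ valid for every $u \in H^1_Z(\G)$: indeed $u = u^+ + u^-$ with $u^+ u^- \equiv 0$, both $u^\pm \in H^1_Z(\G)$, and $u' = 0$ a.e.\ on $\{u = 0\}$, so that $\|u'\|_{L^2(\G)}^2 = \|(u^+)'\|_{L^2(\G)}^2 + \|(u^-)'\|_{L^2(\G)}^2$ and likewise for the $L^2$ and $L^p$ terms; hence each of the three terms in \eqref{J} splits additively. If now $u \in \MM_{\lambda,Z}(\G)$, then by definition $u^\pm \in \NN_{\lambda,Z}(\G)$, whence $J_\lambda(u^\pm) \ge \inf_{\NN_{\lambda,Z}(\G)} J_\lambda =: c_\NN$, so $J_\lambda(u) \ge 2 c_\NN$. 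Taking the infimum over $u \in \MM_{\lambda,Z}(\G)$ yields \eqref{M2N}.

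For the equality case, assume $\inf_{\MM_{\lambda,Z}(\G)} J_\lambda = 2 c_\NN$ and suppose, for contradiction, that $u \in \MM_{\lambda,Z}(\G)$ is a nodal ground state. Then $J_\lambda(u^+) + J_\lambda(u^-) = 2 c_\NN$ with $J_\lambda(u^\pm) \ge c_\NN$, which forces $J_\lambda(u^+) = c_\NN$; thus $u^+$ is a ground state on $\NN_{\lambda,Z}(\G)$, hence---since $\NN_{\lambda,Z}(\G)$ is a natural constraint---a genuine critical point of $J_\lambda$, i.e.\ a nonnegative solution of \eqref{NLSdk}.

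Now I would argue edge by edge. On any edge, $u^+$ solves the ODE $(u^+)'' = (\lambda - |u^+|^{p-2})u^+$, whose right-hand side is locally Lipschitz in $u^+$ (as $p>2$), so if $u^+$ and $(u^+)'$ vanish at a common point of the edge then $u^+ \equiv 0$ on that edge; consequently on each edge $u^+$ either vanishes identically or is strictly positive in the interior. Since $u^+ \not\equiv 0$, it is strictly positive in the interior of some edge $e$; since $\{u < 0\}$ is open and nonempty (because $u^- \ne 0$), $u^+$ vanishes on a subinterval---hence, by the above, identically---on some edge $e' \ne e$. By connectedness of $\G$, some vertex $\vv$ is incident both to $e$ and to $e'$; in particular $\vv$ has degree at least $2$, so $\vv \notin \V_{\infty}$ and $\vv \notin Z$ (both consist of degree-1 vertices), and therefore $u^+$ satisfies $\sum_{f \succ \vv} (u^+)_f'(\vv) = 0$. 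But $u^+(\vv) = 0$ (read off from $e'$) and $u^+ \ge 0$ on $\G$ give $(u^+)_f'(\vv) \ge 0$ for every $f \succ \vv$, while $(u^+)_e'(\vv) > 0$ (otherwise $u^+$ would vanish with its derivative at $\vv$ and hence be $\equiv 0$ on $e$); thus the Kirchhoff sum is strictly positive, a contradiction.

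I expect the only genuinely technical points to be the rigorous justification of the additive splitting of $J_\lambda$ (routine, relying on $u^\pm \in H^1_Z(\G)$ and $u'=0$ a.e.\ on $\{u=0\}$) and the interplay between the edgewise dichotomy and the vertex computation above---in particular making sure the degenerate possibility that $u^+$ vanishes on many edges is handled by connectedness; everything else is bookkeeping.
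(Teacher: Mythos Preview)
Your argument is correct and follows the same route as the paper: split $J_\lambda(u)=J_\lambda(u^+)+J_\lambda(u^-)$, conclude that in the equality case $u^+$ is a ground state, and derive a contradiction from the fact that a ground state cannot vanish on $\G\setminus Z$. The paper obtains this last point by citing \cite[Proposition~3.3]{AST1}; your edgewise ODE/Kirchhoff computation is precisely a self-contained proof of that proposition, so nothing essentially new is happening.

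One minor wording issue: after you \emph{fix} specific edges $e$ and $e'$, connectedness does not give a vertex incident to both of \emph{those} edges. What it gives (and what you really need) is that the two nonempty edge sets $\{u^+>0\text{ in the interior}\}$ and $\{u^+\equiv 0\}$ must, by connectedness, contain a pair of adjacent edges; rename those as $e,e'$ and your vertex argument goes through verbatim. You already anticipated this in your closing paragraph, so just make the choice of $e,e'$ after, not before, invoking connectedness.
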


\begin{remark}
	Inequality \eqref{M2N} also holds when $\G$ is a compact metric graph,
	and is then strict as nodal ground states always exist in this case.
\end{remark}

This abstract strategy, though general, is absolutely insufficient to obtain existence results if one is not able to compute the level $J_\lambda^\infty(\G;Z)$ in concrete cases. Here we will detect specific properties of the graph that permit such computation and make sure that, in certain cases, the ground state level or the nodal ground state level lie below the level at infinity. Since this is where the topology and the metric of the graph become crucial, the analysis of such questions is carried out separately according to the class of graphs under study.  

We first discuss the case of graphs with at least one half-line. For every such graph, $\omega_{Z}(\G)=0$ and so the following results hold for every $\lambda>0$.

We identify topological conditions on $\G$ that prevent the existence of ground states and nodal ground states. We describe them here in a concise way, referring to Section \ref{half-lines} for a more detailed discussion. To begin with, recall that the set of {\em vertices at infinity} of $\G$ is
\[
\V_\infty = \bigl\{\vv \in \V \bigm| \vv \text{ is the vertex of degree $1$ of some half-line} \bigr\}
\]
and define the set
\[
F(\G) = \bigl\{ e \in \E \bigm| \text{at least one connected component of } (\V, \E\setminus \{e\}) \text{ has no vertices in } \V_\infty \cup Z\bigr\}.
\]
The set $F(\G)$ is thus the set of edges of $\G$ (if any) whose removal disconnects $\G$ creating a connected component without vertices in $\V_\infty \cup Z$. The cardinality of $F(\G)$, a purely topological notion, plays a fundamental role in the nonexistence of ground states and nodal ground states. 

To state the next result, for every $\lambda >0$ we define 
\begin{equation*}
  s_\lambda := \inf_{v \in \NN_\lambda(\R)} J_\lambda(v),
\end{equation*}
namely the ground state level of $J_\lambda$ on $\R$.

\begin{theorem}
\label{thm:nonexhalf}
Let  $\G\in {\bf G}$ be a noncompact graph with at least one half-line and $\lambda>0$, then
\begin{equation}
\label{NZlevel0}
\inf_{v \in \NN_{\lambda,Z}(\G)} J_\lambda(v) \leq s_\lambda
\end{equation}
and 
\begin{equation}
\label{MZlevel0}
\inf_{v \in \MM_{\lambda,Z}(\G)} J_\lambda(v) \leq s_\lambda +  \inf_{v \in \NN_{\lambda,Z}(\G)} J_\lambda(v),
\end{equation}
Moreover, 
\begin{itemize}
\item[(i)] if $\# F(\G) = 0$, then 
\begin{equation}
\label{NZlevel}
\inf_{v \in \NN_{\lambda,Z}(\G)} J_\lambda(v) = s_\lambda
\end{equation}
and it is not achieved, unless $\G$ is isometric to $\R$ or to a ``tower of bubbles'' depicted in Figure \ref{fig:torri};
\item[(ii)] if $\# F(\G) \le 1$, then 
\begin{equation}
\label{MZlevel}
\inf_{v \in \MM_{\lambda,Z}(\G)} J_\lambda(v) = s_\lambda +  \inf_{v \in \NN_{\lambda,Z}(\G)} J_\lambda(v) 
\end{equation}
and it is never achieved.
\end{itemize}
\end{theorem}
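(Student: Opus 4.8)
The plan is to establish, in order, the upper bounds \eqref{NZlevel0}--\eqref{MZlevel0}, then the equality and non‑attainment in (i), and finally those in (ii); the two lower bounds are where the hypotheses on $\#F(\G)$ enter, via a P\'olya--Szeg\H{o} rearrangement onto $\R$ and a combinatorial reduction, and the rigidity part of (i) is the main obstacle.

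\emph{Upper bounds.} Fix a half-line $h$ of $\G$, parametrised by $[0,+\infty)$ with $0$ the vertex joining $h$ to the rest of $\G$, and let $\phi$ be the exponentially decaying soliton on $\R$ realising $s_\lambda=J_\lambda(\phi)$, $\phi\in\NN_\lambda(\R)$. For $t$ large let $w_t\in H^1_Z(\G)$ be supported in $h$, equal to $\phi(\cdot-t)$ away from a fixed neighbourhood of $0$ and damped to $0$ at $0$ by a cut-off. Then $\|w_t'\|_{L^2}^2$, $\|w_t\|_{L^2}^2$ and $\|w_t\|_{L^p}^p$ tend, as $t\to+\infty$, to the corresponding quantities of $\phi$, so the unique $\tau_t>0$ with $\tau_tw_t\in\NN_{\lambda,Z}(\G)$ satisfies $\tau_t\to1$ and $J_\lambda(\tau_tw_t)\to s_\lambda$, which gives \eqref{NZlevel0}. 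For \eqref{MZlevel0}, pick for each $\eta>0$ a nonnegative, compactly supported $v_\eta\in\NN_{\lambda,Z}(\G)$ with $J_\lambda(v_\eta)\le\inf_{\NN_{\lambda,Z}(\G)}J_\lambda+\eta$ (possible since compactly supported functions are dense in $H^1_Z(\G)$ and $J_\lambda(|v|)=J_\lambda(v)$), translate $\tau_tw_t$ so far along $h$ that $\supp(\tau_tw_t)\cap\supp v_\eta=\emptyset$, and set $u:=\tau_tw_t-v_\eta$. Then $u^+=\tau_tw_t$ and $u^-=-v_\eta$ both lie in $\NN_{\lambda,Z}(\G)$, so $u\in\MM_{\lambda,Z}(\G)$, and disjointness of the supports gives $J_\lambda(u)=J_\lambda(\tau_tw_t)+J_\lambda(v_\eta)$; letting $t\to+\infty$ then $\eta\to0$ proves \eqref{MZlevel0}.

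\emph{Part (i).} In view of \eqref{NZlevel0} it remains to prove $J_\lambda(u)\ge s_\lambda$ for every $u\in\NN_{\lambda,Z}(\G)$ when $\#F(\G)=0$. The condition $\#F(\G)=0$ guarantees that the symmetric decreasing rearrangement $u^\sharp$ of $|u|$ onto $\R$ preserves every $L^q$ norm and does not increase the Dirichlet energy. On the Nehari manifold $J_\lambda$ is an increasing function of the quotient $Q(u):=(\|u'\|_{L^2}^2+\lambda\|u\|_{L^2}^2)/\|u\|_{L^p}^2$, and $Q$ does not increase under $\sharp$; hence $J_\lambda(u)\ge\inf_{\NN_\lambda(\R)}J_\lambda=s_\lambda$, which together with \eqref{NZlevel0} gives \eqref{NZlevel}. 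If equality is attained by some $u$, then $u^\sharp$ attains $s_\lambda$ on $\R$, hence is a translate of $\phi$, and the rearrangement inequality must be an equality; analysing the equality case pins down the metric structure of $\G$ and the level sets of $u$, and the soliton profile can be realised in this way only if $\G$ is isometric to $\R$ or to a tower of bubbles as in Figure~\ref{fig:torri}.

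\emph{Part (ii).} By \eqref{MZlevel0} it remains to show $J_\lambda(u)\ge s_\lambda+\inf_{\NN_{\lambda,Z}(\G)}J_\lambda$ for every $u\in\MM_{\lambda,Z}(\G)$. Writing $\G^\pm$ for the (disjoint) supports of $u^\pm$, we have $J_\lambda(u)=J_\lambda(u^+)+J_\lambda(u^-)$ with $u^\pm\in\NN_{\lambda,Z}(\G)$, so both terms are $\ge\inf_{\NN_{\lambda,Z}(\G)}J_\lambda$ and it is enough to bound one of them below by $s_\lambda$. Regard $\G^\pm$ as metric subgraphs of $\G$ with Dirichlet conditions added at the interface vertices where $u$ vanishes; then $u^\pm$ lies on the Nehari manifold of $\G^\pm$. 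The crux — and the main obstacle — is the combinatorial fact that $\#F(\G)\le1$ forces $\#F(\G^+)=0$ or $\#F(\G^-)=0$: an edge $e$ cutting off from $\G^+$ a component with no vertex in $\V_\infty$ and no Dirichlet vertex of $\G^+$ cannot meet the interface, hence $e$ already cuts $\G$ into a component containing no vertex of $\V_\infty\cup Z$, so $e\in F(\G)$; a defect edge of $\G^-$ would likewise produce a second, distinct element of $F(\G)$, which is impossible. For the side with vanishing $\#F$, say $\G^+$, the rearrangement estimate of part (i) applied on $\G^+$ gives $J_\lambda(u^+)\ge s_\lambda$, whence \eqref{MZlevel}. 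Finally, if the infimum in \eqref{MZlevel} were attained by some $u$, then up to swapping signs $J_\lambda(u^+)=s_\lambda$ would be attained on $\G^+$; but $\G^+$ carries at least one Dirichlet interface vertex (as $\G$ is connected and $u$ changes sign), so by the rigidity in (i) it can be neither $\R$ nor a tower of bubbles, a contradiction. Hence no nodal ground state exists.
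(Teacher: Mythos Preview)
Your upper bounds and Part~(i) are essentially the paper's argument: the paper obtains \eqref{NZlevel0} from Proposition~\ref{2preim}, proves \eqref{MZlevel0} by the same disjoint-support construction you use, and for \eqref{NZlevel} it replaces each $Z$-edge by a half-line to get a graph $\widetilde\G$ satisfying assumption~\eqref{H}, then invokes Theorem~\ref{notatt}; this is just a repackaging of your P\'olya--Szeg\H{o} argument together with the standard two-preimage result. The rigidity is also delegated to that theorem, with the extra observation that if $Z\ne\emptyset$ then the extension $\widetilde u$ vanishes on the new half-lines and hence cannot be a ground state on $\widetilde\G$, forcing $Z=\emptyset$.

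Part~(ii), however, has a genuine gap. For a general $u\in\MM_{\lambda,Z}(\G)$ the zero set $u^{-1}(0)$ is merely that of an $H^1$ function, so your ``metric subgraphs'' $\G^\pm$ need not belong to the class $\mathbf G$ (no lower bound on edge lengths, possibly infinitely many interface vertices), and their ``edges'' are sub-intervals of edges of $\G$ rather than edges of $\G$; hence the implication ``$e\in F(\G^+)\Rightarrow e\in F(\G)$'' is not meaningful as written, and the combinatorial dichotomy $\#F(\G^+)=0$ or $\#F(\G^-)=0$ is not established. The paper avoids this entirely: when $\#F(\G)=0$ it simply applies Part~(i) to $u^+$ and $u^-$ as elements of $\NN_{\lambda,Z}(\G)$ itself (no subgraphs), obtaining $J(u^\pm)>s_\lambda$ since each vanishes somewhere; when $\#F(\G)=1$ it picks a vertex $\vv$ of the compact piece $\G_K$ in the decomposition \eqref{decompo}, notes that one of $u^\pm$ vanishes at $\vv$, attaches a single half-line at $\vv$, and shows via Lemma~\ref{topolemma} that the resulting graph satisfies \eqref{H0}. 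Extending that $u^\pm$ by zero on the new half-line then gives $J(u^\pm)\ge s_\lambda$, strictly since a ground state cannot vanish on a half-line. This bypasses any need to treat $\G^\pm$ as graphs and is the missing idea in your argument.
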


\begin{figure}[t]
	\centering
	\begin{tikzpicture}
		\node at (0,0) [nodo] {};
		\draw (-3,0)--(3,0);
		\node at (-3.2,0) [infinito] {$\scriptstyle\infty$};
		\node at (3.2,0) [infinito] {$\scriptstyle\infty$};
		\draw (0,.5) circle (.5);
		\node at (.5, .5) [right] {$\ell_1$};
		\node at (-.5, .5) [left] {$\ell_1$};
		\node at (0,1) [nodo] {};
		\draw (0,1.2) circle (.2);
		\node at (.2, 1.2) [right] {$\ell_2$};
		\node at (-.2, 1.2) [left] {$\ell_2$};
		\node at (0,1.4) [nodo] {};
		\draw (0,1.8) circle (.4);
		\node at (0, 2.2) [above] {$\ell_3$};
	\end{tikzpicture}
	\caption{An example of a ``tower of bubbles'' graph. Each of these graphs,  identified in Example 2.4 of \cite{AST1}, is built of a real line and a finite sequence of two-by-two tangent circles.}
	\label{fig:torri}
\end{figure}
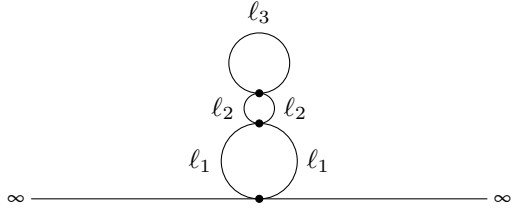

The preceding results are the main examples where a purely topological assumption on the graph rules out the existence of ground states or nodal ground states. The families of graphs fulfilling each of the conditions of Theorem \ref{thm:nonexhalf} are rather large and it is not difficult to exhibit examples of structures with these properties (see Figure \ref{fig:H}). In the case of ground states, the condition $\# F(\G)=0$ was already shown to prevent existence of mass constrained ground states of the energy in \cite{AST1}, where it was named assumption (H). In contrast, the analogous condition for nodal ground states is established here for the first time. We underline that both assumptions on the cardinality of $F(\G)$ are sharp for nonexistence. Indeed, in Section \ref{half-lines} we will show that there exist graphs satisfying $\#F(\G) \ge 1$ that admit a ground state, and graphs satisfying $\#F(\G) \ge 2$ that admit a nodal ground state.

\begin{figure}[h]
	\centering
	\begin{tikzpicture}[xscale=0.5,yscale=0.5]
		\node at (0,0) [nodo] {};
		\node at (3,1) [nodo] {};
		\node at (2,2) [nodo] {};
		\node at (2.5,-1) [nodo] {};
		\draw (0,0)--(3,1); \draw (2,2)--(3,1); \draw (2.5,-1)--(3,1); \draw (0,0)--(2.5,-1); \draw (0,0)--(2,2);\draw (2.5,-1)--(2,2);
		\node at (2.2,.75) [nodo] {};
		\draw (0,0)--(-4.5,0); \node at (-4.8,0) [infinito] {$\scriptstyle\infty$};
		\draw (3,1)--(7.5,1); \node at (7.8,1) [infinito] {$\scriptstyle\infty$};

		\node at (1.5,-2)  [minimum size=0pt] (10) {\footnotesize{(a)}};
		
		\node at (11,0) [infinito] {$\scriptstyle\infty$};
		\draw (11.3,0)--(18,0);
		\draw (19.5,0) circle (1.5);
		\node at (18,0) [nodo] {};

		\node at (15,-2)  [minimum size=0pt] (10) {\footnotesize{(b)}};
		
	\end{tikzpicture}
	\caption{Examples of graphs with half-lines satisfying $\# F(\G)=0$ (a) and $\# F(\G)=1$ (b).}
	\label{fig:H}
\end{figure}
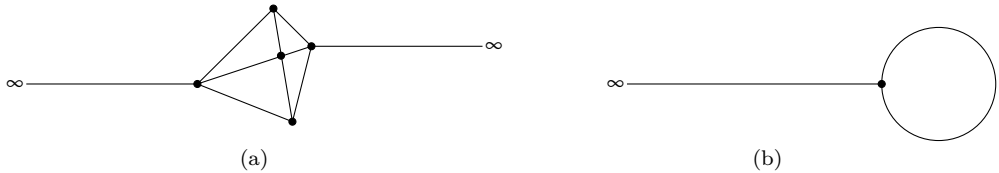 
 
These nonexistence result are complemented in Section \ref{half-lines} by a number of sufficient conditions to ensure existence. Relying on techniques developed for energy ground states in \cite{AST1,AST2}, it is easy to construct graphs where existence of action ground states is guaranteed by purely {\em topological} properties whenever $Z=\emptyset$ (see Theorem \ref{existex} and Figure \ref{gsgraphs} below). Notably, this turns out to be impossible as soon as $Z\neq\emptyset$. In this case, 
a necessary condition of {\em metrical} nature for the existence of ground states arises:  the diameter of the set of all bounded edges of the graph must exceed a threshold depending on $\lambda$ but not on the graph itself (Theorem \ref{thm:noex_GS}). The same constraint holds true for nodal ground states, where it is not even needed to have a nonempty set $Z$ (Theorem \ref{thm:noex_NGS}). In addition to providing purely metrical nonexistence results, these theorems also imply that the interplay between topology and metric must be further exploited if one hopes to recover existence. We give examples of this fact by describing two general procedures to construct graphs where existence is granted (Theorems \ref{baffolungo}--\ref{thm:ex_NGS}). The former relies on the metric only, and shows that one and two sufficiently long edges with vertices of degree 1 not in $Z$ are enough to have ground states and nodal ground states, respectively. The latter basically consists in a suitable gluing of graphs hosting ground states to obtain structures supporting nodal ground states (see e.g.\ Figure~\ref{fig:nodex}).

Section \ref{sec:infedge} of the paper deals with noncompact graphs in the class $\bf G$ with an infinite number of edges, whose length is uniformly bounded from above. Given the huge variety of structures in this class, we confine ourselves to two subclasses of major relevance, that have already been studied extensively in the literature in many contexts (see e.g.~\cite{ADST, BDL20, BDL21,DT,DST20,GSU, pankov,PS17} for results related to those we discuss here): periodic graphs and regular trees.

Without entering the details of the definition of periodic graphs (for which we refer to \cite[Definition 4.1.1]{BK13}), let us mention here that we always work assuming that the set $Z$ shares the same type of periodicity as the graph itself. Our main result in this respect completely  describes the phenomenology from the point of view of ground states and nodal ground states (results in this direction for ground states on periodic graphs were already given in \cite{pankov}).

As for graphs with half-lines, if $\G$ is a periodic graph then $\omega_{Z}(\G)=0$, so that the next theorem holds again for every 
$\lambda>0$.

\begin{theorem}
\label{thm:per}	
 Let $\G\in {\bf G}$ be a periodic graph and $\lambda>0$. Then $\G$ admits a ground state. Furthermore,
\[
\inf_{v \in \MM_{\lambda,Z}(\G)} J_\lambda(v) = 2 \inf_{v \in \NN_{\lambda,Z}(\G)} J_\lambda(v) 
\]
and there are no nodal ground states.
\end{theorem}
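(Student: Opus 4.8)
\textbf{Proof proposal for Theorem \ref{thm:per}.}

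The plan is to establish the three assertions separately, in each case reducing to the abstract criteria of Theorems \ref{groundandnodal} and \ref{nonodal} together with a computation of the level at infinity $J_\lambda^\infty(\G;Z)$ for periodic graphs. First, for the existence of a ground state, I would argue that on a periodic graph the strict inequality \eqref{levelN} holds, i.e.\ $\inf_{\NN_{\lambda,Z}} J_\lambda < J_\lambda^\infty(\G;Z)$. The key fact here is that periodicity provides translation invariance: taking a minimizing sequence for $\inf_{\NN_{\lambda,Z}} J_\lambda$ and translating it by periods keeps the action unchanged, so one can use the periodic group action to "follow the mass" and show that a minimizing sequence cannot vanish weakly unless its action is already at least the level at infinity. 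I would then show that $J_\lambda^\infty(\G;Z)$ equals $\inf_{\NN_{\lambda,Z}}J_\lambda$ only if minimizing sequences exist that vanish — but a translation argument produces from any such a genuine minimizer, and then translating that minimizer by a period and gluing on the Nehari manifold (using the standard fact that concatenating two Nehari functions and reprojecting strictly decreases the sum of actions when $p>2$) would contradict minimality. Concretely, I expect the cleanest route is: either the infimum over $\NN_{\lambda,Z}$ is attained (done), or every minimizing sequence vanishes weakly; in the latter case $\inf_{\NN_{\lambda,Z}}J_\lambda = J_\lambda^\infty(\G;Z)$, but one can also build a competitor supported near a single period whose action is strictly below $J_\lambda^\infty$ by exploiting that a periodic graph contains arbitrarily long paths or cycles of bounded edges, giving room to place a nearly-optimal bump — and this strict inequality then yields existence via Theorem \ref{groundandnodal}(i), a contradiction with non-attainment that forces attainment in the first place.

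Second, for the identity $\inf_{\MM_{\lambda,Z}}J_\lambda = 2\inf_{\NN_{\lambda,Z}}J_\lambda$: the inequality "$\ge$" is exactly \eqref{M2N} from Theorem \ref{nonodal}, so only "$\le$" needs proof. For this I would take a ground state $w \in \NN_{\lambda,Z}(\G)$ (which exists by the first part), which has constant sign on $\G\setminus Z$, say $w\ge 0$. Using periodicity, translate $w$ by a large period vector $T$ to get $w_T := w(\cdot - T)$, also a ground state. The idea is to build a nodal competitor of the form $u = w - w_T$ (or a truncated/localized version), whose positive and negative parts approximately decouple when $T$ is large because $w$ decays. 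One must check that $u^\pm$ can be reprojected onto $\NN_{\lambda,Z}(\G)$ with cost tending to $0$ as $|T|\to\infty$; then $\inf_{\MM_{\lambda,Z}}J_\lambda \le J_\lambda(\text{projected } u) \to 2\inf_{\NN_{\lambda,Z}}J_\lambda$. The technical point is handling the overlap of $w$ and $w_T$: since ground states on noncompact graphs decay exponentially (a consequence of $\lambda>0$ and the equation $u''+|u|^{p-2}u=\lambda u$), the cross terms in the $L^2$, $\dot H^1$ and $L^p$ norms are exponentially small in $|T|$, so the Nehari reprojection of each part is a $1+o(1)$ rescaling and the action converges to twice the ground state level.

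Third, the nonexistence of nodal ground states follows immediately: once $\inf_{\MM_{\lambda,Z}}J_\lambda = 2\inf_{\NN_{\lambda,Z}}J_\lambda$, i.e.\ equality holds in \eqref{M2N}, Theorem \ref{nonodal} directly gives that $\G$ admits no nodal ground state. So the whole theorem reduces to the two computations above.

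I expect the main obstacle to be the strict inequality $\inf_{\NN_{\lambda,Z}}J_\lambda < J_\lambda^\infty(\G;Z)$ needed for existence of the ground state — equivalently, ruling out that the ground state level coincides with the level at infinity. On a general periodic graph one does not have a half-line to exploit (as in Theorem \ref{thm:nonexhalf}), so the computation of $J_\lambda^\infty(\G;Z)$ must be done intrinsically: I anticipate showing that $J_\lambda^\infty(\G;Z)$ equals the ground state level of $J_\lambda$ on the "periodic problem at infinity", which by translation invariance is the same periodic graph, so that the level at infinity equals $\inf_{\NN_{\lambda,Z}}J_\lambda$ itself — and then the strict inequality \eqref{levelN} cannot hold, so one cannot invoke Theorem \ref{groundandnodal}(i) naively. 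The resolution must be a direct concentration-compactness argument on the periodic graph: a minimizing sequence for $\inf_{\NN_{\lambda,Z}}J_\lambda$, after translation by suitable periods to recenter the mass, converges weakly to a nonzero limit which (by weak lower semicontinuity and the Brezis–Lieb lemma applied along the period translations, plus the Nehari structure) must itself be a minimizer; the exclusion of vanishing and dichotomy uses precisely that splitting mass across two far-apart periods strictly raises the action because the map $t\mapsto J_\lambda(tv)$ has a strict maximum on each Nehari ray. This is the delicate part and is presumably where the bulk of Section \ref{sec:infedge} is spent; the remaining steps are then routine given the exponential decay of ground states.
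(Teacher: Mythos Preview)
Your proposal is broadly correct once you abandon the attempt to use Theorem~\ref{groundandnodal}(i). You rightly recognize in the last paragraph that $J_\lambda^\infty(\G;Z)=\inf_{\NN_{\lambda,Z}}J_\lambda$ on a periodic graph, so \eqref{levelN} cannot hold and the abstract criterion is useless here; the paper indeed bypasses it entirely and runs a direct concentration-compactness argument for Part~1, just as you finally suggest: translate a minimizing sequence so that its $L^\infty$-maximum lies in a fixed fundamental domain, extract a nonzero weak limit $u$ (nonzero since $\|u_n\|_\infty$ is bounded below by Proposition~\ref{boundedness}), and rule out dichotomy. Your first paragraph, however, is confused: the suggestion that gluing a minimizer with its translate would ``contradict minimality'', or that one could build a competitor with action strictly below $\inf_{\NN_{\lambda,Z}}J_\lambda$, does not make sense and should be discarded.

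Two points of genuine difference. First, the paper's dichotomy argument is not the standard ``splitting raises action'' line you sketch: instead it writes $u\in\NN_{\theta,Z}$ and $u_n-u\in\NN_{\lambda_n,Z}$, computes via Brezis--Lieb that $\lambda$ is essentially a convex combination of $\theta$ and $\lambda_n$, so one of them is $\ge\lambda$, and then invokes the monotonicity of $t\mapsto\inf_{\NN_{t,Z}}J_t$ (Remark~\ref{rem:Jincr}) to reach a contradiction. This is cleaner than a full Lions-type dichotomy analysis and avoids any need to localize. Second, for the identity $\inf_{\MM_{\lambda,Z}}J_\lambda=2\inf_{\NN_{\lambda,Z}}J_\lambda$, your approach via exponential decay of the ground state and control of overlap terms in $w-w_T$ would work but is unnecessarily heavy. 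The paper simply takes a \emph{compactly supported} near-minimizer $u\in\NN_Z$ (available by Remark~\ref{approx} applied to a ground state), translates it by a large enough period so the supports are disjoint, and sets $w=u-\bar u$; then $w\in\MM_Z$ exactly, with $J(w)=2J(u)$, and no decay estimates or reprojections are needed. Part~3 is, as you say, immediate from Theorem~\ref{nonodal}.
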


It is interesting to note that the above results are insensitive of the degree of periodicity, i.e.\ the dimension $n$ of the group $\Z^n$ whose action leaves the graph invariant. This is particularly relevant when compared  with the available results for prescribed mass  ground states (see \cite{ADST,D19}), whose existence has been shown to  depend strongly on this feature.

The last results of  Section \ref{sec:infedge}  concern regular trees, i.e.\ acyclic, noncompact metric graphs with infinitely many bounded edges, all of the same length, and where all the vertices have the same degree, with the possible exception of a unique root vertex of degree 1. If such a vertex with degree 1 is present, we speak of a rooted tree (see Figure~\ref{tree}~(a)), otherwise we speak of an unrooted tree (see Figure~\ref{tree}~(b)). Note that regular trees are well-known examples of noncompact graphs satisfying $\omega_{Z}(\G)>0$ (see e.g.~\cite{DST20} and references therein). Hence, in this setting our results involve also negative values of $\lambda$.

\begin{figure}
	\centering
	\begin{tikzpicture}[xscale= 1,yscale=1]
		\begin{scope}[xscale= 0.35,yscale=0.35,grow=south,thick,				]
			\tikzstyle{level 1}=[level distance=10em]
			\tikzstyle{level 2}=[sibling distance=14em,level distance=7em]
			\tikzstyle{level 3}=[sibling distance=8em,level distance=9.1em]
			\tikzstyle{level 4}=[sibling distance=3.5em,level distance=5em]
			\node at (0,12em) [nodo] {}   child {node [nodo] {}
				node [nodo] {} child foreach \x in {0,1}
				{node [nodo] {} child foreach \y in {0,1}
					{node [nodo] {} child [dashed] foreach \z in {0,1}}}};
		\end{scope}
		\begin{scope}[xshift=17em,grow cyclic,shape=circle, thick,
			level distance=2.7em,
			cap=round]
			\tikzstyle{level 1}=[rotate=-90,sibling angle=120]
			\tikzstyle{level 2}=[sibling angle=85]
			\tikzstyle{level 3}=[sibling angle=57]
			\tikzstyle{level 4}=[sibling angle=60]
			\tikzstyle{edge from parent}=[draw]
			\node at (0,0) [nodo] {} child  foreach \A in {1,2,3}
			{ node [nodo]
				{} child  foreach \B in {1,2}
				{ node [nodo] {} child   foreach \C in {1,2}
					{ node [nodo] {} child [dashed,level distance=1.5em]  foreach \C in {1,2} }
				}
			};
			\node at (-17em,-11em) {(a)};
			\node at (0em,-11em) {(b)};
		\end{scope}
	\end{tikzpicture}
	\caption{Examples of a rooted tree (a) and an unrooted tree (b).}
	\label{tree}
\end{figure}
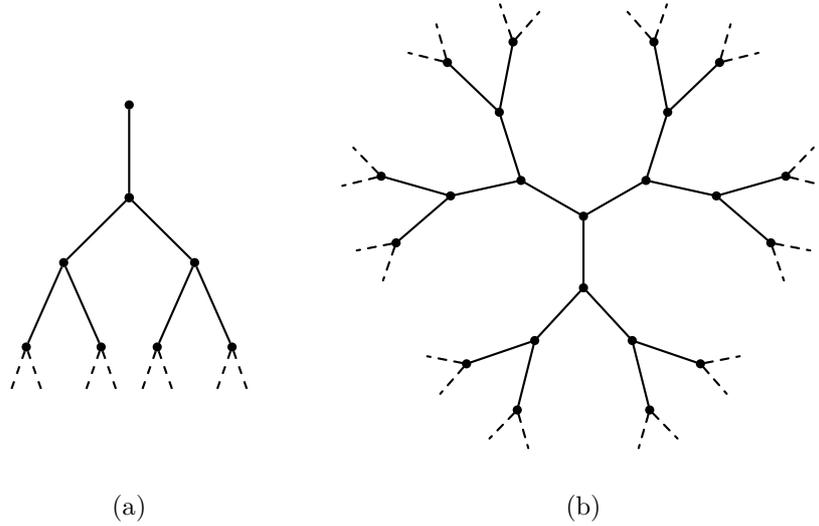 

\begin{theorem}
\label{thm:tree}	
Let $\G$ be a regular tree and $\lambda>-\omega_Z(\G)$. Then 
\begin{itemize}
\item[(i)] if $\G$ is unrooted or if $\G$ is rooted and $Z$ is empty, $\G$ admits a ground state;
\item[(ii)] if $\G$ is rooted and $Z$ is not empty, there are no ground states on $\G$;
\item[(iii)] independently of $Z$, there are no nodal ground states on $\G$. 
\end{itemize}
\end{theorem}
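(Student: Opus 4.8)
The plan is to combine the abstract criteria of Theorems~\ref{groundandnodal} and~\ref{nonodal} with two structural features of a regular tree $\G$. Write $o$ for the root (when $\G$ is rooted), $\vv$ for its unique neighbour, $d\ge 2$ for the common degree of the non-root vertices, and $\iota_*v:=v\circ\iota^{-1}$. \emph{Vertex-transitivity}: when $\G$ is unrooted its automorphism group is transitive on $\V$ and contains translations $\tau^k$ along any geodesic with $\tau^k\to\infty$. \emph{Self-similarity}: each of the $d-1$ subtrees hanging below $\vv$ is isometric to $\G$ via an isometry carrying $o$ to $\vv$, so a rooted $\G$ contains $d-1$ isometric copies of itself meeting pairwise only at $\vv$, none of which contains the edge $o\vv$. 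I shall also use that $\inf_{\NN_{\lambda,Z}(\G)}J_\lambda>0$ and that every ground state is a nontrivial solution of~\eqref{NLSdk} which, since $|u|$ is then a ground state too and hence by the maximum principle strictly positive on $\G\setminus Z$, cannot vanish identically on any edge (a vanishing on one edge propagates to all of $\G$ through continuity, the Kirchhoff conditions and the one-sign property). Finally, if $d=2$ then $\G$ is isometric to $\R$ or to a half-line and the three assertions follow from the results of Section~\ref{half-lines}; so I assume $d\ge 3$.

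\textbf{Claims (ii) and (i).} For (ii) the only degree-$1$ vertex is $o$, so $Z=\{o\}$. Suppose $v$ were a ground state and let $\iota\colon\G\to\G'\subseteq\G$ be an isometry onto one self-copy below $\vv$ with $\iota(o)=\vv$; since $v(o)=0$, the function equal to $\iota_*v$ on $\G'$ and to $0$ on the rest of $\G$ is continuous, lies in $\NN_{\lambda,Z}(\G)$ with the same action, hence is again a ground state — yet it vanishes identically on the edge $o\vv$, a contradiction. For (i) (either $\G$ unrooted, or $\G$ rooted with $Z=\emptyset$, so that Kirchhoff at $o$ just reads $u'(o)=0$) I would run a concentration-compactness argument on a nonnegative minimizing sequence $(u_n)\subseteq\NN_{\lambda,Z}(\G)$: it is $H^1$-bounded and does not vanish in Lions' sense (else $\|u_n\|_{L^p(\G)}\to 0$, impossible on $\NN_{\lambda,Z}(\G)$), so after recentring — by translations in the unrooted case and by the self-similar embeddings in the rooted case, which is legitimate \emph{precisely because} $Z=\emptyset$ imposes no constraint at $o$, whereas in (ii) recentring would break the Dirichlet condition — a subsequence converges weakly to some $u\ne 0$. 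Taking $(u_n)$ Palais--Smale for $J_\lambda$ constrained to $\NN_{\lambda,Z}(\G)$, $u$ is a critical point of $J_\lambda$, hence lies in $\NN_{\lambda,Z}(\G)$; since $J_\lambda=(\tfrac12-\tfrac1p)\|\cdot\|_{L^p(\G)}^p$ there and $u_n\weak u$ in $L^p(\G)$, one gets $J_\lambda(u)\le\liminf_nJ_\lambda(u_n)=\inf_{\NN_{\lambda,Z}(\G)}J_\lambda$, and the splitting alternative is excluded because a second bump would add at least $\inf_{\NN_{\lambda,Z}(\G)}J_\lambda>0$ to the level. Thus $u$ is a ground state.

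\textbf{Claim (iii).} By Theorem~\ref{nonodal} it suffices, when $\G$ is unrooted or rooted with $Z=\{o\}$, to prove $\inf_{\MM_{\lambda,Z}(\G)}J_\lambda\le 2\inf_{\NN_{\lambda,Z}(\G)}J_\lambda$. In the rooted Dirichlet case this is immediate: for $v\in\NN_{\lambda,Z}(\G)$ and two self-copies $\G'_1,\G'_2$ below $\vv$ (distinct children) with isometries $\iota_1,\iota_2$, the function equal to $\iota_{1*}v$ on $\G'_1$, to $-\iota_{2*}v$ on $\G'_2$ and to $0$ elsewhere is continuous (as $v(o)=0$), belongs to $\MM_{\lambda,Z}(\G)$, and has action $2J_\lambda(v)$. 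In the unrooted case, I would approximate $\inf_{\NN_{\lambda,Z}(\G)}J_\lambda$ by nonnegative compactly supported $u_n\in\NN_{\lambda,Z}(\G)$ (truncation and renormalisation on the Nehari manifold) and use a translation $\tau^{k_n}$ moving $\supp u_n$ off itself: then $u_n-u_n\circ\tau^{-k_n}$ is continuous (both terms vanish on the region separating the two supports), lies in $\MM_{\lambda,Z}(\G)$, and has action $2J_\lambda(u_n)$. In either case Theorem~\ref{nonodal} rules out nodal ground states. There remains the rooted case with $Z=\emptyset$, which is subtler and where typically $\inf_{\MM_{\lambda,\emptyset}(\G)}J_\lambda>2\inf_{\NN_{\lambda,\emptyset}(\G)}J_\lambda$, so that Theorem~\ref{nonodal} does not apply directly; I would treat it as in Theorem~\ref{thm:nonexhalf}(ii), showing that $\inf_{\MM_{\lambda,\emptyset}(\G)}J_\lambda$ equals $\inf_{\NN_{\lambda,\emptyset}(\G)}J_\lambda$ plus the infimum of $J_\lambda$ over the Nehari set of the rooted \emph{Dirichlet} problem, and that this value is not attained: a competitor is built by pairing a ground state concentrated near $o$ with an opposite-sign function supported in a deep subtree and vanishing at that subtree's root (an admissible competitor for the Dirichlet problem), while for any $w\in\MM_{\lambda,\emptyset}(\G)$ at least one of $w^\pm$ has support avoiding $o$ and so restricts to admissible competitors for Dirichlet problems on subtrees; attainment would then force such a restriction to be a ground state of a rooted Dirichlet problem, which is impossible by (ii).

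\textbf{Main obstacle.} The crux is the rooted Neumann case of (iii): making precise how a minimizing sequence for the nodal problem splits, and checking that the Dirichlet infima over all subtrees of $\G$ avoiding $o$ are bounded below by the rooted Dirichlet infimum and are never attained (the latter again by the self-similarity argument of (ii)). A secondary difficulty is the concentration-compactness bookkeeping in (i) for the rooted tree, where, the symmetry group not being transitive, one recentres escaping mass by the self-similar embeddings and must control the discarded tails.
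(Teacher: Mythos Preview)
Your arguments for (ii), for (iii), and for the unrooted part of (i) are essentially those of the paper, with cosmetic differences (for (ii) you push a putative ground state into a self-copy of $\G$ rather than extending it by zero to the associated unrooted tree $\overline\G$; the contradiction is the same). One slip in the rooted $Z=\emptyset$ case of (iii): what you need is that one of $w^\pm$ \emph{vanishes at} $o$, not that its support avoids $o$; this already places it in $\NN_{\lambda,\{o\}}(\G)$ and gives the lower bound directly, without passing to subtrees.

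The real gap is the rooted $Z=\emptyset$ case of (i). Recentring by self-similar embeddings is not analogous to recentring by automorphisms: pulling $u_n$ back from a deep self-copy $\G'_n\cong\G$ discards the restriction of $u_n$ to $\G\setminus\G'_n$, and nothing forces this discarded piece to be small. A minimizing sequence on a rooted tree may well spread its mass over many sibling subtrees at comparable depth (an approximately radial profile about a deep vertex of degree $d$ already does this), so after pulling back to a single self-copy you retain only a fraction of the $L^p$-mass, the resulting function is far from the Nehari level, and the Palais--Smale information does not transfer. This is not ``bookkeeping''; it is why the paper does \emph{not} attempt concentration--compactness on the rooted tree.

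Instead the paper applies Theorem~\ref{groundandnodal}(i). It first identifies
\[
J^\infty(\G;\emptyset)=\inf_{v\in\NN_{\{o\}}(\G)}J(v)=\inf_{v\in\NN(\overline\G)}J(v),
\]
where $\overline\G$ is the unrooted tree obtained by gluing $d$ copies of $\G$ at their roots, and then proves the strict inequality $\inf_{\NN(\G)}J<\inf_{\NN(\overline\G)}J$ by the following device. Take a positive ground state $u$ on $\overline\G$ (which exists by the unrooted case), split $\overline\G$ at a vertex into $d$ copies $\G_i$ of $\G$, and let $u_i\in\NN_{\lambda_i}(\G_i)$ be the restrictions. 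Since $\lambda=\sum_i(\|u_i\|_2^2/\|u\|_2^2)\,\lambda_i$ is a convex combination, some $\lambda_j\ge\lambda$, hence $n_\lambda(u_j)\le 1$ and
\[
\inf_{v\in\NN(\G)}J(v)\le J\bigl(\pi_\lambda(u_j)\bigr)=\kappa\,n_\lambda(u_j)^p\|u_j\|_{L^p(\G_j)}^p<\kappa\|u\|_{L^p(\overline\G)}^p=\inf_{v\in\NN(\overline\G)}J(v).
\]
This strict inequality is the missing ingredient; without it (or an equivalent), your recentring scheme cannot exclude that escaping mass drives the level down to $J^\infty(\G;\emptyset)$, which would mean the infimum is not attained.
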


As in the case of periodic graphs, the above theorem provides a complete description of the problem for regular trees. Note that, as one may expect, the role of the set $Z$ is crucial to discriminate between existence and nonexistence on rooted trees.

We observe that  the discussion developed here requires no restrictions on the nonlinearity power $p$, so that all our results apply for every $p>2$. In particular, the existence statements listed above provide constant sign and sign changing solutions to \eqref{NLSdk} also when $p>6$, the so--called $L^2$--supercritical regime, whose analysis is much harder in the context of fixed mass critical points of the energy (first investigations in this direction have been recently initiated in \cite{BCJS,CJS}).\smallskip

To conclude, we study nodal domains (i.e.\ the connected components of $\G \setminus u^{-1} (0)$) and the nodal set (i.e.\ the set  $u^{-1} (0)$) of nodal ground states $u$. As one may expect, nodal ground states have exactly two nodal domains (Theorem \ref{nodaldom}). We also show that the nodal set can have an arbitrary number of components and an arbitrary measure. This is in contrast with  the case of open domains of~$\R^N$,
where unique continuation principles forbid nonzero solutions to vanish on nonempty open subsets.

\begin{theorem}
\label{nodalprop}
For every $k,m,n \in \N$ with $m\ge 2$, there exists a graph $\G$ and a nodal ground state $u$ on $\G$ such that $u^{-1}(0)$ is the union of $k$ isolated points, $m$ half-lines and $n$ line segments.
\end{theorem}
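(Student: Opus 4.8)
The plan is to establish the statement by an explicit construction: $\G$ will be built by gluing two isometric copies of a single compact ``building block'', decorated with dead half-lines and segments. Fix $\lambda>0$, which is admissible since $\G$ will carry half-lines and hence $\omega_Z(\G)=0$. First I would take a compact graph $\K$ made of a central vertex $c$, of $k+m+n$ pendant edges $[c,o_1],\dots,[c,o_{k+m+n}]$ whose degree one endpoints are placed in $Z$, and of one further pendant edge $[c,P]$ of length $\rho$ whose degree one endpoint $P$ is \emph{not} in $Z$. Being compact, $\K$ hosts a ground state which, up to a sign change, is positive; by the strong maximum principle this $\varphi$ is strictly positive on $\K$ except at $o_1,\dots,o_{k+m+n}$, where $\varphi=0$ with $\varphi'(o_j)\neq0$. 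I would then take two copies $\K_+,\K_-$ of $\K$, identify the two copies of $o_j$ into a single degree two vertex $O_j$ (removed from $Z$) for each $j$, attach one half-line at each of $O_{k+1},\dots,O_{k+m}$ and one pendant segment at each of $O_{k+m+1},\dots,O_{k+m+n}$, and call $\G$ the resulting graph, which is noncompact since $m\ge2$. Let $\sigma$ be the isometric involution of $\G$ exchanging $\K_+$ and $\K_-$ and fixing pointwise all the $O_j$ and all the appendages.

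Write $\varphi_\pm$ for the copy of $\varphi$ living on $\K_\pm$, extended by $0$ on the appendages, and set $u:=\varphi_+-\varphi_-$. Since $\varphi$ solves \eqref{NLSdk} on $\K$, $u$ solves the interior equation on each edge of $\G$ and is continuous; the only vertices where the vertex conditions need checking are the $O_j$, and there Kirchhoff holds because the two incident edges (one from each copy of $\K$) carry the outgoing derivatives $+|\varphi'(o_j)|$ and $-|\varphi'(o_j)|$, while the appendages contribute nothing as $u\equiv0$ on them. Testing the equation edge by edge with $u^\pm$ and integrating by parts (the boundary terms vanish since $u^\pm$ vanishes at the relevant vertices) yields $u^\pm\in\NN_{\lambda,Z}(\G)$, so $u\in\MM_{\lambda,Z}(\G)$; moreover $u>0$ on $\K_+$, $u<0$ on $\K_-$ away from the $O_j$, and $u\equiv0$ on the appendages, whence $u^{-1}(0)$ is exactly the union of the $k$ isolated points $O_1,\dots,O_k$, the $m$ half-lines and the $n$ segments. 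To know that $\G$ admits a nodal ground state I would invoke Theorem~\ref{baffolungo}: $\G$ contains the two long bounded pendant edges coming from the two copies of $[c,P]$, with endpoints outside $Z$, and for $\rho$ large this forces a nodal ground state; equivalently, one checks \eqref{levelM} directly, using $J_\lambda^\infty(\G;Z)=s_\lambda$ for graphs of this kind (Section~\ref{half-lines}).

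It remains to see that a nodal ground state of $\G$ — which a priori need not be the $u$ above — has the prescribed nodal set. Let $w$ be a nodal ground state, with nodal domains $D^\pm:=\{\pm w>0\}$. A cut-and-paste minimality argument shows that $w^\pm$ is a ground state of $\overline{D^\pm}$ with the Dirichlet condition on $\partial D^\pm$: otherwise one replaces, say, $w^+$ by a competitor of strictly smaller action supported on $\overline{D^+}$ and vanishing on $\partial D^+$, keeps $w^-$, and produces an element of $\MM_{\lambda,Z}(\G)$ of action below $\inf_{\MM_{\lambda,Z}(\G)}J_\lambda$, a contradiction. To pin down $D^\pm$ I would use the symmetry: a $\sigma$-antisymmetric $H^1$ function vanishes on every $\sigma$-fixed half-line, and these are all the half-lines of $\G$, so mass cannot escape to infinity within the $\sigma$-antisymmetric class; the corresponding constrained minimization is then compact and, together with \eqref{levelM}, attains the nodal ground state level at a $\sigma$-antisymmetric $w$. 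For such a $w$, $\G\setminus\mathrm{Fix}(\sigma)$ is the disjoint union of the two components $\K_+$ minus the $o_j$'s and $\K_-$ minus the $o_j$'s, which $\sigma$ exchanges; since $w$ has exactly two nodal domains (Theorem~\ref{nodaldom}) and a change of sign inside one of these components would force, via antisymmetry, at least four nodal domains, the two nodal domains are precisely these components, $w$ vanishes only on $\mathrm{Fix}(\sigma)$, and $w^{-1}(0)$ is the union of $k$ points, $m$ half-lines and $n$ segments (in fact $w$ then equals $u$, for the appropriate choice of ground state on $\K$).

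The main obstacle is the identification just described: the nodal ground state need not be unique, so one has to prove that imposing the symmetry $\sigma$ does not raise the nodal ground state level — i.e.\ that the unrestricted level is attained by a $\sigma$-antisymmetric function — and it is exactly the design of $\G$ (all half-lines $\sigma$-fixed, hence dead for antisymmetric functions) that is meant to supply this. A secondary, more computational point is to guarantee that adjoining the $m+n$ dead appendages does not destroy the strict inequality \eqref{levelM}, equivalently that the two long pendant edges still do their work in the sense of Theorem~\ref{baffolungo}; this is ensured by taking the pendant length $\rho$ large enough.
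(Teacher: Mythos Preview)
Your construction is creative and genuinely different from the paper's, but it has a real gap at the step you yourself flag as ``the main obstacle'': you never prove that the nodal ground state level on $\G$ is attained by a $\sigma$-antisymmetric function. Compactness within the antisymmetric class (because such functions vanish on all half-lines) shows only that the \emph{restricted} infimum over antisymmetric elements of $\MM_{\lambda,Z}(\G)$ is attained; it says nothing about whether this restricted level coincides with $\inf_{\MM_{\lambda,Z}(\G)} J$. Invoking \eqref{levelM} does not help: that inequality guarantees existence of \emph{some} nodal ground state, not that it respects the symmetry. Symmetry breaking in this kind of minimization is a genuine phenomenon, and there is no mechanism in your argument to rule it out. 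Concretely, your candidate $u=\varphi_+-\varphi_-$ has action $2J(\varphi)$ with $\varphi$ the ground state on the compact block $\K$; but a competitor in $\MM_{\lambda,Z}(\G)$ may let its positive part leak through some of the gluing vertices $O_j$ into $\K_-$, and nothing you have written bounds the action of such asymmetric competitors from below by $2J(\varphi)$. The ``cut-and-paste'' observation that $w^\pm$ is a ground state on $\overline{D^\pm}$ is correct but does not locate $D^\pm$.

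The paper avoids this difficulty by a completely different route: it does not guess a candidate and then try to certify minimality. Instead it first produces, on a base graph $\G_{k,L}$ with $L$ large, a nodal ground state whose nodal set consists of isolated interior points (this uses an asymptotic comparison of levels as $L\to\infty$, Lemmas~\ref{lem 1}--\ref{lem 2} and Proposition~\ref{Prop 6.2}). Then, and this is the key monotonicity ingredient you are missing, it shows (Lemmas~\ref{Lem 6.5} and~\ref{Lem 6.7}) that attaching half-lines or short Dirichlet segments at \emph{any} point of a graph cannot lower the nodal ground state level. Hence, attaching the appendages at a zero of an existing nodal ground state and extending by zero yields a function in $\MM$ whose action equals the old level, which is $\le$ the new level, so the extension is automatically a nodal ground state on the enlarged graph. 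This monotonicity replaces any need for a symmetry argument. If you want to rescue your construction, the natural fix is to prove an analogue of these lemmas for your graph and attach the appendages at known zeros of a nodal ground state on a smaller graph, rather than trying to force antisymmetry.
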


\smallskip
The remainder of the paper is organized as follows. Section \ref{sec:prel} collects some preliminary facts useful for the subsequent analysis, while Section \ref{abstract} provides the proof of the abstract results contained in Theorems \ref{groundandnodal}--\ref{nonodal}. Section \ref{half-lines} analyzes the case of  graphs with half-lines, while periodic graphs and trees are dealt with in Section \ref{sec:infedge}.  Qualitative properties of nodal ground states are studied in Section \ref{sect qual}.

\medskip
\emph{Notation}. Throughout, we will drop the dependence of $\NN_{\lambda,Z}(\G)$, $\MM_{\lambda,Z}(\G)$, $J_\lambda$ on $\lambda$ and $\G$, writing $\NN_Z$, $\MM_{Z}$ and $J$ whenever possible, keeping the complete notation only if necessary. Similarly, when the context permits it,  we will not explicitly indicate, in norms, the dependence on the domain of integration. Furthermore, when $Z = \emptyset$, we do not put $\emptyset$ as a subscript and simply write $H^1(\G)$, $\mathcal{N}$, $\mathcal{M}$, etc. 
	
\section{Preliminaries}
\label{sec:prel}

For the precise notion of metric graphs, we refer to \cite{BK13}.   However, we make precise in the following definition the class of graphs that we consider in this paper.

\begin{definition}
\label{Def 2.1}
We denote by $\bf G$ the class of metric graphs $\G =  ({\mathbb V},{\mathbb E})$ such that
\begin{itemize}
\item $\G$ is connected and has an at most countable number of edges;
\item $\deg(\vv) < \infty$ for every $\vv \in \mathbb{V}$, where $\deg(\vv)$ denotes the degree of the vertex $\vv$, i.e.\ the  number of edges incident at $\vv$;
\item $\ell:=\inf_{e \in {\mathbb E}} \ell_e >0$, where $\ell_e$ denotes the length of the edge $e$.
\end{itemize}
\end{definition}

Note that a graph $\G \in \bf G$ is noncompact as soon as one of the following two eventualities occurs: i) $\G$ has at least one unbounded edge (i.e.\ a half-line), ii) the number of edges of $\G$ is infinite.

\begin{remark}
\label{deg2}
One could add in the definition of $\G$ the assumption that every vertex $\vv$ satisfies $\deg(\vv) \not=2$. Indeed, vertices $\vv$ of degree 2 can a priori be eliminated from any metric graph, by melting the two edges incident at $\vv$ into a single edge. In some cases however (see Remark~\ref{approx}) the possibility of using vertices of degree 2 turns out to be quite handy. We notice that adding or removing vertices of degree 2 from a graph changes it combinatorially, but not as a metric space, and in this paper we will identify graphs that differ only by vertices of degree 2.
\end{remark}

As anticipated in the Introduction, we couple equation \eqref{eq:NLS} with general boundary conditions. Given a noncompact graph $\G \in \bf G$, we let $Z\subset \V$ denote a set of vertices of degree $1$ (possibly empty or infinite) where we impose homogeneous Dirichlet conditions and we set
\begin{equation*}
H^1_Z(\G) := \bigl\{ u \in H^1(\G) \bigm|  u(\vv) = 0 \text{ for every } \vv \in Z \bigr\}.
\end{equation*}
The Nehari manifold associated to $J$ on $H_Z^1(\G)$ is 
\begin{align*}
\mathcal{N}_{Z} &:=  \bigl\{ u \in H_Z^1(\G) \bigm| u \ne 0,\, J'(u)u= 0\bigr\} \\
&=  \bigl\{ u \in H_Z^1(\G) \bigm| u \ne 0,\, \|u'\|_{2}^2 + \lambda \|u\|_{2}^2 = \|u\|_{p}^p\bigr\},
\end{align*}
while the nodal Nehari set is
\begin{equation*}
\mathcal{M}_Z := \bigl\{ u \in H^1_Z(\G) \bigm| u^{\pm} \in \mathcal{N}_Z \bigr\} \\
= \bigl\{ u \in H^1_Z(\G) \bigm| u^{\pm} \ne 0,\, J'(u)u^{\pm} = 0 \bigr\}.
\end{equation*}
The nodal Nehari set contains all nodal solutions of \eqref{NLSdk}, but,  generally, it is not a manifold. However, the following  fundamental property holds for global minimizers on compact graphs. 

\begin{proposition}
\label{mincrit}
Let $\G \in \bf G$ be  compact  and $\lambda>-\omega_Z(\G)$.
If $u \in \mathcal{M}_Z$ satisfies
\[
 J(u) = \inf_{v\in\MM_Z} J(v),
\]
then $J'(u) = 0$.	
\end{proposition}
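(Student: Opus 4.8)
The plan is to show that a global minimizer $u$ of $J$ over $\MM_Z$ is automatically a critical point of $J$, exploiting that $\G$ is compact so that all the relevant infima are attained and the fibering maps behave as expected. The standard strategy (going back to the work on sign-changing solutions via the Nehari manifold) is the following. Suppose, for contradiction, that $J'(u)\neq 0$. Write $u = u^+ + u^-$ with $u^\pm\neq 0$ and $u^\pm\in\NN_Z$. Since $J'(u)\neq 0$, there is a direction $\varphi\in H^1_Z(\G)$ with $J'(u)\varphi = -1$, say, and by continuity of $J'$ there is $\delta>0$ such that $J'(v)\varphi \le -\tfrac12$ for all $v$ in a ball $B(u,\delta)$. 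The idea is then to deform $u$ slightly in the direction $\varphi$ and reproject onto $\MM_Z$, producing an element of $\MM_Z$ with strictly smaller action, contradicting minimality.

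\emph{Key steps.} First I would record the two-variable fibering structure: for $u$ with $u^\pm\neq 0$, consider
\[
h(s,t) := J\bigl(s\,u^+ + t\,u^-\bigr), \qquad s,t\ge 0,
\]
and observe (using $p>2$) that $h$ has a \emph{unique} maximum point on the open quadrant $(0,\infty)^2$, attained precisely at $(s,t)=(1,1)$ when $u\in\MM_Z$; moreover this maximum is strict and, because $\G$ is compact (so $H^1_Z(\G)\hookrightarrow L^p(\G)$ compactly and the $L^2$ term is controlled), the map $(s,t)\mapsto h(s,t)$ tends to $-\infty$ as $|(s,t)|\to\infty$ and the Hessian at $(1,1)$ is negative definite. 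This is the technical heart and uses that each of $J'(u^+)u^+=0$, $J'(u^-)u^-=0$ pins down $(s,t)=(1,1)$; the disjointness of the supports of $u^+$ and $u^-$ (they only meet on the nodal set, of measure possibly positive but where both vanish) makes $h(s,t) = g_+(s) + g_-(t)$ split into two one-variable Nehari fiberings, each strictly concave-type with unique interior max. Second, I would introduce a cut-off deformation: for small $\eps>0$ set $u_\eps := u + \eps\,\varphi$, then reproject $(u_\eps^+, u_\eps^-)$ via the fibering maps to get $w_\eps := s_\eps u_\eps^+ + t_\eps u_\eps^- \in \MM_Z$ (the reprojection coefficients $(s_\eps,t_\eps)$ exist and are unique by the previous step, and depend continuously on $\eps$ with $(s_0,t_0)=(1,1)$). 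Third, I would estimate $J(w_\eps)$: by a first-order Taylor expansion combining the choice $J'(u)\varphi<0$ with the fact that at $\eps=0$ we sit at the maximum of the fibering (so the reprojection costs only $O(\eps^2)$), one gets $J(w_\eps) < J(u)$ for $\eps$ small, contradicting $J(u)=\inf_{\MM_Z}J$.

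\emph{Main obstacle.} The delicate point is the third step: one must show that the gain from moving in the descent direction $\varphi$ (linear in $\eps$, strictly negative) is not cancelled by the reprojection back onto $\MM_Z$. The clean way is to choose $\varphi$ so that the deformation and reprojection interact well. A robust route is a pseudo-gradient/deformation-lemma packaging: if $J'(u)\neq 0$, take a pseudo-gradient vector field $V$ near $u$ and the associated flow $\eta_\tau$; for $\tau>0$ small, $J(\eta_\tau(u)) \le J(u) - c\tau$ with $c>0$. Then argue that $\eta_\tau(u)$ still has a nontrivial positive and negative part (true for small $\tau$ since $u^\pm\neq 0$ and the flow is continuous in $H^1_Z(\G)$, hence in $L^p$), reproject onto $\MM_Z$ using the fibering maps, and note the reprojection does not increase $J$ beyond $J(\eta_\tau(u))$ by more than $o(\tau)$ because the reprojection coefficients are $1+o(1)$ and $(1,1)$ is the fiberwise \emph{maximum} (so the second-order correction has a sign in our favour, or at worst is $O(\tau^2)$). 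This yields an element of $\MM_Z$ below the infimum, the desired contradiction. I would also note where compactness is used: it guarantees $h(s,t)\to-\infty$ at infinity (hence the fibering maxima are attained and the reprojection is well-defined and continuous) and ensures $\inf_{\MM_Z}J$ and $\inf_{\NN_Z}J$ are finite and positive, so all the quantities in the argument make sense.
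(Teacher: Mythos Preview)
Your outline follows the broad strategy of Szulkin--Weth \cite{SW} that the paper invokes (indeed, the paper's proof consists solely of a reference to \cite[Theorem~18]{SW}), and your Steps~1--2 are correct and standard. Step~3, however, has a genuine gap. The claim that reprojecting $v:=\eta_\tau(u)$ (or $u+\eps\varphi$) onto $\MM_Z$ costs only $O(\tau^2)$, or even $o(\tau)$, is not justified. First, the sign is \emph{against} you: the reprojection sends $v$ to the \emph{maximum} of its fiber $(s,t)\mapsto J(sv^++tv^-)$, so $J(w_\tau)\ge J(v)$ always. Second, an $O(\tau^2)$ bound on $J(w_\tau)-J(v)$ would need $J'(v^+)v^+=O(\tau)$, hence an estimate of the type $\|(v^+)'\|_{L^2}^2-\|(u^+)'\|_{L^2}^2=O(\tau)$. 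But the map $v\mapsto v^+$ is not Lipschitz from $H^1$ to $H^1$: the quantity $\|(v^+)'\|_{L^2}^2=\int_{\{v>0\}}|v'|^2$ can change by an amount not controlled by $\|v-u\|_{H^1}$ when the sets $\{v>0\}$ and $\{u>0\}$ differ on a region where $|u'|$ is large relative to $|u|$. Concretely, for $u(x)=\mathrm{sgn}(x)\,|x|^{1/2+\delta}$ near the origin (which is in $H^1$ for every $\delta>0$, and after rescaling lies in $\MM_Z$), a vertical shift of size $\tau$ changes $\|(u^+)'\|_{L^2}^2$ by an amount of order $\tau^{4\delta/(1+2\delta)}$, which can decay arbitrarily slowly. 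Since at this stage $u$ is not yet known to solve \eqref{eq:NLS}, no extra regularity of its nodal set can be assumed to exclude this behaviour.

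The robust argument---and what \cite{SW} actually does---avoids reprojecting a single point. One keeps the full two-parameter family $\gamma(s,t):=su^++tu^-$ on a closed square $Q$ around $(1,1)$, applies the deformation $\eta$ from the quantitative deformation lemma to obtain $\widetilde\gamma:=\eta\circ\gamma$, and then uses a Brouwer degree (Miranda-type) argument on
\[
(s,t)\longmapsto\Bigl(J'(\widetilde\gamma(s,t))\,\widetilde\gamma(s,t)^+,\ J'(\widetilde\gamma(s,t))\,\widetilde\gamma(s,t)^-\Bigr)
\]
to show that $\widetilde\gamma(Q)\cap\MM_Z\neq\emptyset$. Since $J\circ\gamma<c:=J(u)$ on $\partial Q$, the deformation is the identity there, so $\widetilde\gamma=\gamma$ on $\partial Q$ and the boundary map has nonzero degree; meanwhile $\max_{Q}J\circ\widetilde\gamma<c$ because $\eta$ strictly lowers $J$ near $u$. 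This yields an element of $\MM_Z$ with action below $c$, the desired contradiction, without any estimate on a reprojection cost.
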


\begin{proof} This is proved in \cite[Theorem 18]{SW}  in the context of elliptic problems on open subsets of $\R^N$, but since the proof is  abstract, it adapts \emph{verbatim} to compact metric graphs.
\end{proof}

Obviously $\MM_Z \subset \NN_Z$ and, for $ u \in \NN_Z$, the functional $J$ defined in \eqref{J} takes the simple form
\begin{equation}
  \label{formJ}
  J(u)
  = \kappa \|u\|_p^p = \kappa (\|u'\|_2^2 + \lambda \|u\|_2^2),
  \qquad \kappa := \frac12 -\frac1p,
\end{equation}
from which we see that $J$ is positive on $\NN_Z$. Actually much more can be said, as stated in the next proposition, which rephrases in the present setting an analogous result of \cite[Proposition 2.3]{DDGS}.

\begin{proposition}
  \label{boundedness}
For every $\lambda >-\omega_Z(\G)$ and $p > 2$,
there exists a constant $C>0$ depending only on $\lambda$ and $p$ such that for all noncompact $\G \in \bf G$, 
\begin{equation*}
	\inf_{u \in \NN_{Z}} \| u \|_p \geq C>0.
\end{equation*}
Moreover, if $(u_n)_n \subset \NN_{Z}$
satisfies $\displaystyle \sup_n J(u_n) < \infty$,
then $(u_n)_n$ is bounded in $H^1(\G)$ and
\begin{equation*}
\inf_n \|u_n\|_2 > 0, \quad \inf_n \|u_n\|_\infty > 0.
\end{equation*}
\end{proposition}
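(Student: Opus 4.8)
\textbf{Proof plan for Proposition~\ref{boundedness}.}
The plan is to exploit the two identities in \eqref{formJ} together with a uniform Gagliardo--Nirenberg--type inequality on graphs in $\bf G$. First I would establish the lower bound $\inf_{u\in\NN_Z}\|u\|_p\ge C>0$. Let $u\in\NN_Z$. If $\lambda\ge 0$, then from $\|u'\|_2^2+\lambda\|u\|_2^2=\|u\|_p^p$ one gets $\|u'\|_2^2\le\|u\|_p^p$ and $\lambda\|u\|_2^2\le\|u\|_p^p$, so $\|u\|_{H^1}^2\le(1+\tfrac1\lambda)\|u\|_p^p$ (or just $\|u'\|_2^2\le\|u\|_p^p$ if $\lambda=0$, combined with a Poincaré-type control that is unavailable in general—hence one must be a little careful, see below). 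The cleanest route is to use the scaling-invariant inequality: there is a constant $K=K(p)$, \emph{independent of the noncompact graph} $\G\in\bf G$, such that
\begin{equation*}
\|u\|_p^p\le K\,\|u\|_2^{p/2+1}\,\|u'\|_2^{p/2-1}\qquad\text{for all }u\in H^1(\G).
\end{equation*}
This is exactly the content used in \cite[Proposition 2.3]{DDGS} and ultimately rests on the one-dimensional Gagliardo--Nirenberg inequality on each edge plus the fact that $\ell:=\inf_e\ell_e>0$ prevents the constant from blowing up; I would quote it (or reprove it in two lines) rather than rederive it.

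Granting this inequality, the lower bound follows quickly. On $\NN_Z$ we have $\|u\|_p^p=\|u'\|_2^2+\lambda\|u\|_2^2$. When $\lambda\ge0$ this gives $\|u\|_p^p\ge\|u'\|_2^2$, and also, if $\lambda>0$, $\|u\|_p^p\ge\lambda\|u\|_2^2$; feeding both into the inequality yields $\|u\|_p^p\le K\lambda^{-(p/2+1)/2}(\|u\|_p^p)^{(p/2+1)/2}(\|u\|_p^p)^{(p/2-1)/2}=K\lambda^{-(p/2+1)/2}(\|u\|_p^p)^{(p-1)/2}$, wait—more carefully, $\|u\|_2^{p/2+1}\le(\lambda^{-1}\|u\|_p^p)^{(p/2+1)/2}$ and $\|u'\|_2^{p/2-1}\le(\|u\|_p^p)^{(p/2-1)/2}$, so $\|u\|_p^p\le K\lambda^{-(p/2+1)/4}(\|u\|_p^p)^{(p/2+1)/4+(p/2-1)/4}=K\lambda^{-(p/2+1)/4}(\|u\|_p^p)^{p/4}$. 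Since $p>2$ the exponent $p/4$ can exceed $1$, so this reads $(\|u\|_p^p)^{1-p/4}\le K\lambda^{-(p/2+1)/4}$ and, noting $1-p/4<1$, gives a lower bound on $\|u\|_p$ depending only on $\lambda,p$. The borderline case $\lambda=0$ (or $\lambda\le0$ with $\omega_Z(\G)>0$) needs the version of the GN inequality already adjusted so that $\|u'\|_2^2+\lambda\|u\|_2^2$ appears on the right with a uniformly equivalent norm; this is exactly how \cite[Proposition 2.3]{DDGS} is phrased, and I would invoke it in that form, which sidesteps the case distinction and the need for a Poincaré inequality (unavailable when $\G$ has a half-line and $\lambda=0$—precisely why $s_\lambda$ is not attained).

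For the second part, suppose $(u_n)_n\subset\NN_Z$ with $\sup_n J(u_n)<\infty$. By \eqref{formJ}, $\sup_n J(u_n)=\kappa\sup_n(\|u_n'\|_2^2+\lambda\|u_n\|_2^2)<\infty$, which since $\lambda>-\omega_Z(\G)$ bounds $\|u_n\|_{H^1}$ (using that $\|u'\|_2^2+\lambda\|u\|_2^2\ge c\,\|u\|_{H^1}^2$ for some $c=c(\lambda,\G)>0$, by definition of $\omega_Z$); this gives boundedness in $H^1(\G)$. Next, $\sup_n J(u_n)=\kappa\sup_n\|u_n\|_p^p<\infty$ gives an upper bound on $\|u_n\|_p$, while the first part gives $\inf_n\|u_n\|_p>0$. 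To get $\inf_n\|u_n\|_2>0$, apply the GN inequality again: $C^p\le\|u_n\|_p^p\le K\|u_n\|_2^{p/2+1}\|u_n'\|_2^{p/2-1}\le K\|u_n\|_2^{p/2+1}\,M^{(p/2-1)/2}$ where $M$ bounds $\|u_n'\|_2^2$; since $p/2+1>0$ this forces $\|u_n\|_2$ bounded below. Finally $\inf_n\|u_n\|_\infty>0$ follows from the elementary interpolation $\|u_n\|_p^p\le\|u_n\|_\infty^{p-2}\|u_n\|_2^2$ on any metric graph, together with $\inf_n\|u_n\|_p>0$ and $\sup_n\|u_n\|_2<\infty$. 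The only genuinely delicate point is the uniform (graph-independent) GN constant, and I expect that to be the main obstacle; but since it is the content of the cited \cite[Proposition 2.3]{DDGS}, in the present write-up it is legitimate to quote it and let everything else reduce to the short computations sketched above.
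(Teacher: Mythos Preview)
Your proposal is correct and follows the paper's approach: the paper does not actually prove this proposition but simply cites \cite[Proposition~2.3]{DDGS}, and your sketch reconstructs that argument via the uniform Gagliardo--Nirenberg inequality on noncompact graphs together with the identities in~\eqref{formJ}. One arithmetic slip to fix when you write it up: after inserting $\|u\|_2^2\le\lambda^{-1}\|u\|_p^p$ and $\|u'\|_2^2\le\|u\|_p^p$ into the GN bound, the exponent on $\|u\|_p^p$ should be $(p/2+1)/2+(p/2-1)/2=p/2$, not $p/4$ as you wrote---with the correct exponent the inequality $(\|u\|_p^p)^{1-p/2}\le K\lambda^{-(p+2)/4}$ yields the desired lower bound for every $p>2$, whereas $p/4>1$ would require $p>4$.
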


As is well known, there is a natural continuous projection $\pr : H^1_Z(\G) \setminus\{0\} \to \NN_Z$, defined by
\begin{equation*}
\pr (u) = n_\lambda(u)u, \qquad n_\lambda (u) = \left(\frac{\|u'\|_2^2 + \lambda \|u\|_2^2}{\|u\|_p^p} \right)^{\frac{1}{p-2}},\end{equation*}
so that $u \in \NN_Z$ if and only if $n_\lambda(u) = 1$.
Note also that if $u \in H^1_Z(\G)$ satisfies $u^\pm \ne 0$, then $\pr(u^+) + \pr(u^-) \in \MM_Z$.

\begin{remark}
\label{rem:Jincr}
For every metric graph $\G$ and every set $Z$ of degree 1 vertices, the map 
\[
t \mapsto \inf_{v\in\NN_{t,Z}(\G)}J_{t}(v)
\] 
is increasing  and continuous on $(-\omega_{Z}(\G),+\infty)$ (see e.g.~\cite[Lemma 2.4]{DST22} for a proof in the context of open subsets of $\R^n$ that applies with no modification to our setting).
\end{remark}

\section{Proof of the abstract results}
\label{abstract}

In this section we prove the abstract results stated in Theorems \ref{groundandnodal}--\ref{nonodal}.
The strategy for the proof of the existence results is to construct special minimizing sequences for $J$ on $\NN_Z$ or $\MM_Z$,
to avoid problems caused by the noncompactness of the graphs.

\begin{remark}
\label{approx}
The proof of the next results relies on the following approximation procedure. Given a noncompact graph $\G \in \bf G$, we construct an increasing sequence $(\K_n)_n \subseteq \G$
of connected compact graphs such that $\bigcup_{n \ge 1} \K_n = \G$, 
and a sequence $(\chi_n)_n  \subseteq H^1(\G)$ of cut-off functions such that
\begin{equation*}
0 \le \chi_n \le 1, \quad
\|\chi_n'\|_{\infty} \le 1/\ell, \qquad
{\chi_n}_{\mid \K_{n-1}} = 1, \qquad
\supp \chi_n \subseteq \K_n, 
\end{equation*}
with $\ell$ as in Definition \ref{Def 2.1}.

To describe the graphs $\K_n$ we begin by performing a preliminary operation on $\G$ as follows. On each half-line of $\G$ (if any) we insert vertices of degree 2 at the points of coordinates $k\ell$, $k=1, 2, \dots$. Every half-line can now be viewed as a sequence of consecutive edges (each of length $\ell$). With some abuse of notation, we still call $\G = (\V, \E)$ the new graph obtained in this way (see Remark~\ref{deg2}).
Let now $\vv_0 \in \G$ be a fixed vertex. For every $n \ge 1$,  let  $\V_n$ be the set
of vertices of $\V$ that can be reached from $\vv_0$
travelling on at most $n$ edges. As each node of $\G$ has finite degree,
the sets $\V_n$ are finite and, since $\G$ is connected, for each vertex $\vv \in \G$ (different from $\vv_0$)
there exists $n_0(\vv) \ge 1$ such that $\vv$ belongs to $\V_n$ for every $n \ge n_0(\vv)$. Then we define the graph $\K_n$ as $(\V_n, \E_n)$, where $\E_n$ is the set of edges of $\E$ whose vertices belong to $\V_n$. Clearly, each $\K_n$ is connected and compact, and $\bigcup_{n \ge 1} \K_n = \G$.
Finally, we define $\chi_n$ to be  equal to $1$ on  $\K_{n-1}$,
to $0$ on $\G \setminus \K_n$ and affine on every edge of $\K_n\setminus \K_{n-1}$. All the required properties trivially hold (the bound on $\chi_n'$
follows from the fact that all edges of $\G$ have length at least $\ell$). 

Finally, note that, given $u\in H_Z^1(\G)$, it is straightforward to check that $\chi_n  u \to u$ in $H^1(\G)$ as $n\to \infty$.
\end{remark}

Exploiting Remark \ref{approx}, we now construct
suitable minimizing sequences for $J$ on $\NN_Z$ and $\MM_Z$.

\begin{proposition}
\label{weaklim} 
Let $\G \in \bf G$ be noncompact and $\lambda>-\omega_Z(\G)$.
There exists a minimizing sequence $(u_n)_n \subseteq \NN_Z$ for $J$ and $u\in H^1_Z(\G)$ such that
\begin{equation*}
u_n \rightharpoonup u\quad\text{ weakly in } H^1(\G), \qquad u \ge 0  \qquad \mbox{and}\qquad
			J'(u) = 0. 
\end{equation*}
\end{proposition}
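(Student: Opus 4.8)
The plan is to obtain the required sequence as a Palais--Smale sequence for $J$ on $\NN_Z$ and then to read off the conclusion from its weak limit. Write $c := \inf_{\NN_Z} J$; by \eqref{formJ} and Proposition \ref{boundedness} this is finite and strictly positive, and (combining $\inf_{\NN_Z}\|u\|_p>0$ with $H^1(\G)\hookrightarrow L^p(\G)$) $\NN_Z$ is bounded away from $0$ in $H^1(\G)$, hence a complete $\C^1$-manifold. Applying Ekeland's variational principle to $J$ restricted to $\NN_Z$ produces a sequence $(w_n)_n \subseteq \NN_Z$ with $J(w_n) \to c$ and $\|\nabla_{\NN_Z} J(w_n)\| \to 0$ in $(H^1_Z(\G))^*$. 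I would then check that $\NN_Z$ is a natural constraint: writing $\nabla_{\NN_Z} J(w_n) = J'(w_n) - \mu_n G'(w_n)$ with $G(u) := J'(u)u$, evaluating at $w_n$, and using $J'(w_n) w_n = 0$ together with $G'(w_n) w_n = (2-p)\|w_n\|_p^p < 0$ bounded away from zero by Proposition \ref{boundedness}, one gets $\mu_n \to 0$, and hence $J'(w_n) \to 0$ in $(H^1_Z(\G))^*$. Thus $(w_n)_n$ is a Palais--Smale sequence for $J$ at level $c$, bounded in $H^1(\G)$ again by Proposition \ref{boundedness}.

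Next I would pass to a subsequence with $w_n \weak w$ in $H^1(\G)$, where $w \in H^1_Z(\G)$ since this is a weakly closed subspace. Using the compact exhaustion $(\K_m)_m$ of Remark \ref{approx} and the compactness of $H^1(\K_m) \hookrightarrow C(\K_m)$, a diagonal argument gives, along a further subsequence, $w_n \to w$ locally uniformly, hence a.e.\ on $\G$. To see that $J'(w) = 0$, fix $\varphi \in H^1_Z(\G)$: the terms $\int_\G w_n' \varphi'$ and $\int_\G w_n \varphi$ converge to the corresponding quantities for $w$ by weak convergence in $L^2(\G)$, while $|w_n|^{p-2} w_n$ is bounded in $L^{p'}(\G)$ — indeed $\| |w_n|^{p-2} w_n \|_{p'}^{p'} = \|w_n\|_p^p$ — and converges a.e.\ to $|w|^{p-2} w$, hence weakly in $L^{p'}(\G)$, so that $\int_\G |w_n|^{p-2} w_n \varphi \to \int_\G |w|^{p-2} w \varphi$ since $\varphi \in L^p(\G)$. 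Letting $n \to \infty$ in $J'(w_n)\varphi \to 0$ yields $J'(w)\varphi = 0$, and $\varphi$ is arbitrary.

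It remains to arrange $u \ge 0$. If $w = 0$, I take $u_n := w_n$ and $u := 0$; there is nothing to prove since $J'(0) = 0$. If $w \ne 0$, then $J'(w) w = 0$ forces $w \in \NN_Z$, so $J(w) \ge c$, while \eqref{formJ} and Fatou's lemma give $J(w) = \kappa \|w\|_p^p \le \kappa \liminf_n \|w_n\|_p^p = c$; hence $J(w) = c$. If $w$ were sign changing, then — splitting each integral in \eqref{J} over $\{w > 0\}$ and $\{w < 0\}$ — one finds $w^\pm \in \NN_Z$ and $J(w) = J(w^+) + J(w^-) \ge 2c > c$, a contradiction; hence $w$ has constant sign. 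Replacing $(w_n)_n$ and $w$ by $(-w_n)_n$ and $-w$ if necessary — which changes neither $J(w_n)$ nor $\|J'(w_n)\|$ since $J$ is even, and preserves membership in $\NN_Z$ and the weak convergence — I may assume $w \ge 0$, and then $u_n := w_n$, $u := w$ satisfy all the requirements.

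The two limit passages are routine; the points I expect to require the most care are the natural-constraint computation, which turns the constrained Palais--Smale condition into a genuine one, and especially the final sign step, where excluding a sign-changing weak limit relies on the strict inequality $2c > c$ — this is exactly where $c > 0$, i.e.\ Proposition \ref{boundedness}, is used — together with the additivity $J(w) = J(w^+) + J(w^-)$ that holds on $\MM_Z$.
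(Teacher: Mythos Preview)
Your argument is correct and proves exactly the stated proposition. It is, however, a genuinely different route from the paper's. The paper does \emph{not} use Ekeland or any Palais--Smale machinery: instead it builds an exhaustion $\K_n\nearrow\G$ by compact subgraphs (Remark~\ref{approx}), takes $u_n\ge 0$ to be a ground state for $J$ on the Nehari manifold of $H^1_{Z\cup\partial\K_n}(\K_n)$, shows by a cut-off argument that these $u_n$ (extended by~$0$) form a minimizing sequence for $J$ on $\NN_Z$, and reads off $J'(u)=0$ from $J'(u_n)\phi=0$ for $\phi$ supported in $\K_n$ together with weak continuity. In their approach $u_n\ge 0$ is built in from the start, whereas you obtain $u\ge 0$ only a~posteriori by the $2c>c$ sign-exclusion argument (correct, since $c>0$).

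The trade-off is this: your approach is the textbook Ekeland/natural-constraint argument, perfectly adequate for $\NN_Z$, and as a bonus it shows that when $u\ne 0$ the weak limit is already a ground state. The paper's approach is chosen because the very same compact-exhaustion scheme is reused verbatim in the companion Proposition~\ref{prop:compactness_lans} for the nodal Nehari set $\MM_Z$, where your method would break down: $\MM_Z$ is in general not a $\C^1$-manifold, so Ekeland on $\MM_Z$ does not yield a Lagrange-multiplier decomposition, and there is no direct ``natural constraint'' computation. By passing to compact pieces, the paper can invoke Proposition~\ref{mincrit} (minimizers on $\MM_Z$ of compact graphs are critical points) and then argue exactly as here. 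So their proof of the present proposition is really a warm-up for the harder nodal case.
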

	
\begin{proof} Keeping in mind the notation of Remark \ref{approx}, let $(\K_n)_n$ be the sequence of compact graphs approximating $\G$ and let $\partial \K_n = \V_n \setminus \V_{n-1}$.
Define the Hilbert space $H_n := H^1_{Z \cup \partial \K_n}(\K_n)$ and the Nehari manifold associated to $J$ on $H_n$, namely
\begin{equation*}
\NN_n := \bigl\{ u \in H_n \bigm| u \ne 0,\, J'(u)u = 0 \bigr\}.
\end{equation*}
If $u\in H_n$, it vanishes on $\partial \K_n$ and, after extending it by $0$, it can be viewed as a function in $H^1_Z(\G)$,
that we still denote by $u$. Therefore, $\NN_n \subseteq \NN_Z$ for every $n\ge 1$.
Let $u_n \in \NN_n$ be a ground state for $J$ restricted to $H_n$, that is,
\begin{equation*}
J(u_n) = \inf_{v \in \NN_n} J(v).
\end{equation*}
The existence of $u_n$ is standard by the compactness of the embedding of $H^1(\K_n)$ into $L^p(\K_n)$ observing that, by construction, $\omega_{Z\cup \partial \K_n}(\K_n)\geq \omega_Z(\G)$. Note also that, as $u\in \NN_n$ if and only if $|u|\in \NN_n$ and $J(u)=J(|u|)$, we can assume that $u_n\ge 0$ on $\G$. 

We claim that $(u_n)_n$ is a minimizing sequence for $J$ on $\NN_Z$. First note that, since $\K_{n-1} \subset \K_n$ for every $n$, the sequence  $(J(u_n))_n$ is nonincreasing.
		
Given any $\eps > 0$, let $\overline u \in \NN_Z$ be such that
$\displaystyle J(\overline u) \le \inf_{v\in\NN_Z} J(v) + \eps/2$. Let $(\chi_n)_n$ be the sequence of cut-off functions of Remark \ref{approx}. For every $n$, the function $\widetilde{u}_n := \pr(\chi_n  \overline u)$ is in $\NN_Z$ and $\supp \widetilde{u}_n \subseteq \K_n$, which means, in particular, that $\widetilde u_n$ (restricted to $\K_n$) is in $\NN_n$. Moreover, by Remark \ref{approx} and the continuity of $\pr$, as soon as $n$ is large enough we have
\begin{equation*}
J(\widetilde{u}_n) \le \inf_{v\in\NN_Z} J(v) + \eps.
\end{equation*}
Therefore, for all $n$ large,
\begin{equation*}
J(u_n) = \inf_{v \in \NN_n}J(v)  \le J(\widetilde{u}_n) \le \inf_{v\in\NN_Z} J(v) + \eps.
\end{equation*}
Thus
$(u_n)_n$ is a minimizing sequence for $J$ on $\NN_Z$,
and the claim is proved.
Since $(u_n)_n$ is bounded in $H^1_Z(\G)$ (like all minimizing sequences), up to a subsequence it converges weakly to some $u \in H^1_Z(\G)$ that also satisfies $u \ge 0$. Since $u_n$ minimizes $J$ over $\NN_n$,  it follows that $J'(u_n) \phi = 0$ for every $\phi \in H_n$.  As $u \mapsto J'(u)\phi$ is weakly continuous on $H_Z^1(\G)$,
letting $n\to \infty$ shows that $J'(u) \phi = 0$ for every  $\phi \in H_n$ and every $n$, and thus, by density, that $J'(u) = 0$.
\end{proof}

\begin{proposition}
	\label{prop:compactness_lans}
	Let $\G \in \bf G$ be noncompact  and $\lambda>-\omega_Z(\G)$.
	There exists a minimizing sequence $(u_n)_n \subseteq \MM_Z$ for $J$  and $u\in H^1_Z(\G)$  such that
	\begin{equation*}
	u_n \rightharpoonup u\quad\text{ weakly in } H^1(\G)  \qquad \mbox{and}\qquad J'(u) = 0. 
	\end{equation*}	
\end{proposition}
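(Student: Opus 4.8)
The plan is to mimic the construction in Proposition \ref{weaklim}, but now working on the nodal Nehari sets of the compact exhausting subgraphs $\K_n$. First I would set up the same exhaustion $(\K_n)_n$ and cut-off functions $(\chi_n)_n$ of Remark \ref{approx}, put $\partial\K_n = \V_n\setminus\V_{n-1}$, define $H_n := H^1_{Z\cup\partial\K_n}(\K_n)$ and the corresponding nodal Nehari set $\MM_n := \{u\in H_n \mid u^\pm\in\NN_n\}$. Since $\K_n$ is compact, Proposition \ref{mincrit} (applied on $\K_n$ with Dirichlet conditions on $Z\cup\partial\K_n$, using $\omega_{Z\cup\partial\K_n}(\K_n)\ge\omega_Z(\G)$) guarantees that a minimizer $u_n$ of $J$ on $\MM_n$ exists and is a genuine critical point of $J$ on $H_n$, i.e.\ $J'(u_n)\phi = 0$ for all $\phi\in H_n$. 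Extending $u_n$ by zero shows $\MM_n\subseteq\MM_Z$, and the nesting $\K_{n-1}\subset\K_n$ makes $(J(u_n))_n$ nonincreasing.

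Next I would show $(u_n)_n$ is minimizing for $J$ on $\MM_Z$. Given $\eps>0$ pick $\overline u\in\MM_Z$ with $J(\overline u)\le \inf_{\MM_Z}J + \eps/2$; then $\overline u^\pm\ne 0$ and $\chi_n\overline u^\pm\ne 0$ for $n$ large. Set $\widetilde u_n := \pr(\chi_n\overline u^+) + \pr(\chi_n\overline u^-)$, which lies in $\MM_Z$ by the remark right after the definition of $\pr$, and whose support is contained in $\K_n$, so (restricted to $\K_n$) it belongs to $\MM_n$. Using $\chi_n\overline u^\pm\to\overline u^\pm$ in $H^1(\G)$, continuity of $\pr$, and the fact that $J$ is a continuous sum of the contributions of $\pr(\chi_n\overline u^+)$ and $\pr(\chi_n\overline u^-)$ on their (essentially disjoint) supports, one gets $J(\widetilde u_n)\le\inf_{\MM_Z}J + \eps$ for $n$ large, hence $J(u_n) = \inf_{\MM_n}J \le J(\widetilde u_n)\le \inf_{\MM_Z}J + \eps$. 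Thus $(u_n)_n$ is minimizing. By Proposition \ref{boundedness} it is bounded in $H^1(\G)$, so up to a subsequence $u_n\rightharpoonup u$ weakly in $H^1(\G)$ for some $u\in H^1_Z(\G)$. Finally, since each $u_n$ is a critical point of $J$ on $H_n$, and $\phi\mapsto J'(v)\phi$ is weakly continuous in $v$ on $H^1_Z(\G)$, passing to the limit gives $J'(u)\phi = 0$ for every $\phi\in H_n$ and every $n$, hence $J'(u) = 0$ by density of $\bigcup_n H_n$ in $H^1_Z(\G)$.

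The main obstacle, compared with Proposition \ref{weaklim}, is justifying the existence of the minimizer $u_n$ on the compact graph $\K_n$ \emph{and} that it is a critical point: the nodal Nehari set is not a manifold and not a natural constraint, so one cannot simply invoke a constrained-minimization argument. This is exactly what Proposition \ref{mincrit} (via \cite[Theorem 18]{SW}) provides on compact graphs, so the step reduces to checking its hypotheses are met for $\K_n$ with the enlarged Dirichlet set $Z\cup\partial\K_n$ — in particular $\lambda > -\omega_{Z\cup\partial\K_n}(\K_n)$, which follows from $\omega_{Z\cup\partial\K_n}(\K_n)\ge\omega_Z(\G)$ and $\lambda>-\omega_Z(\G)$. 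A secondary point requiring a little care is that $\widetilde u_n$ restricted to $\K_n$ genuinely lies in $\MM_n$: one needs $\pr(\chi_n\overline u^\pm)$ to vanish on $\partial\K_n$ (true, since $\supp\chi_n\subseteq\K_n$ and $\chi_n$ is affine vanishing at $\partial\K_n$) and that the Nehari projection computed in $H^1_Z(\G)$ agrees with the one computed in $H_n$ for functions supported in $\K_n$ (immediate from the formula for $n_\lambda$). Everything else is a routine repetition of the arguments already carried out in Proposition \ref{weaklim}.
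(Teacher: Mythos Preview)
Your proof is correct and follows essentially the same route as the paper: exhaust $\G$ by compact $\K_n$, take $u_n$ a nodal ground state on $\K_n$ with Dirichlet conditions on $Z\cup\partial\K_n$ (existence via \cite[Theorem~18]{SW} and compactness, criticality via Proposition~\ref{mincrit}), use the cut-offs to show $(u_n)_n$ is minimizing on $\MM_Z$, and pass to the weak limit using weak continuity of $u\mapsto J'(u)\phi$. The only minor slip is that Proposition~\ref{mincrit} itself does not assert existence of the minimizer on $\MM_n$---that comes separately from compactness (as you note via \cite[Theorem~18]{SW})---but you clearly have both ingredients in hand.
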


\begin{proof} The proof is very similar to the one of Proposition \ref{weaklim}, to which we refer for the notation.
	Let 
	\[
	\MM_n := \bigl\{ v \in H_n \bigm| v^{\pm} \in \NN_n \bigr\}
	\]
	and, for each $n$, let $u_n \in \MM_n$ be a nodal  ground state for $J$ restricted to $H_n$, that is,
	\begin{equation*}
	J(u_n) = \inf_{v \in \MM_n} J(v).
	\end{equation*}
	The existence of $u_n$ follows plainly by the compactness of the embedding of $H^1(\K_n)$ into $L^p(\K_n)$ as, for example, in \cite[Theorem 18]{SW} observing again that $\omega_{Z\cup \partial \K_n}(\K_n)\geq \omega_Z(\G)$.
	
	We claim that $(u_n)_n$ is a minimizing sequence for $J$ on $\MM_Z$. We note that, as above,  $(J(u_n))_n$ is nonincreasing.
	If $u \in \MM_Z$, we have $(\chi_n  u)^{\pm} = \chi_n  u^{\pm}$, 
	and both functions are nonzero if $n$ is large enough.
	By the continuity of  $\pr$ and Remark~\ref{approx}, as $n \to \infty$,	
	\begin{equation*}
	\pr(\chi_n  u^+) + \pr(\chi_n  u^-) \to \pr(u^+) + \pr( u^-) = u\quad\text{ in }H_Z^1(\G)\,.
	\end{equation*}
Now, given any $\eps > 0$, let $\overline u \in \MM_Z$ satisfy $\displaystyle J(\overline u) \le \inf_{\mathcal{M}_Z} J + \eps/2$. 
	Define $\widetilde{u}_n := \pr(\chi_n \overline u^+) + \pr(\chi_n  \overline u^-)$,
	so $\widetilde{u}_n \in \MM_Z$, $\supp \widetilde{u}_n \subseteq \K_n$, whence $\widetilde u_n \in \MM_n$.
	Then, for every $n$  large enough,
	\begin{equation*}
	J(u_n) = \inf_{v \in \MM_n}J(v)  \le J(\widetilde{u}_n) \le \inf_{v\in\MM_Z} J(v) + \eps,
	\end{equation*}
	showing that  $(u_n)_n$ is a minimizing sequence for $J$ on $\MM_Z$. Since, by Proposition \ref{mincrit}, $J'(u_n) \phi = 0$ for every $\phi \in H_n$, we conclude exactly as in the proof of Proposition~\ref{weaklim}.
\end{proof}

We are now in position to prove Theorems \ref{groundandnodal}--\ref{nonodal}.

\begin{proof}[Proof of Theorem \ref{groundandnodal}] Let us prove the two statements separately.
	
\smallskip
{\em Proof of (i).} Let $(u_n)_n\subseteq \NN_Z$ 
be the minimizing sequence for $J$ on $\NN_Z$ constructed in Proposition~\ref{weaklim} and let $u \ge 0$ be its weak limit.
We first show that $u\not\equiv 0$. Indeed, if this were the case, then $u_n \rightharpoonup 0$ in $H^1_Z(\G)$, so that
\begin{equation*}
\inf_{v \in \NN_Z} J(v)
= \liminf_{n \rightarrow \infty} J(u_n)
\ge J^{\infty}(\G; Z),
\end{equation*}
which is ruled out by assumption~\eqref{levelN}. Now as $u\not\equiv 0$ and $J'(u) = 0$, we see that $u\in \NN_Z$ and then,  by \eqref{formJ} and weak lower semicontinuity,
\[
J(u) = \kappa \|u\|_{p}^{p} \le \liminf_{n \rightarrow \infty}\kappa \|u_n\|_{p}^{p} = \liminf_{n \rightarrow \infty} J(u_n) = \inf_{v \in \NN_Z} J(v)
\]
showing that $u$ is a ground state. As such, it solves \eqref{NLSdk} and is positive on $\G\setminus Z$ by \cite[Proposition~3.3]{AST1}.

\smallskip
{\em Proof of (ii).} Consider the minimizing sequence $(u_n)_n$ given by Proposition \ref{prop:compactness_lans} and its weak limit $u \in H^1_Z(\G)$ satisfying $J'(u) = 0$. 
We first show that $u^\pm \not\equiv 0$. For every $n$,
\begin{equation*}
J(u_n) = J(u_n^+) + J(u_n^-) \ge J(u_n^+) + \inf_{v \in \NN_Z} J(v).
\end{equation*}
If, for instance, $u^+ \equiv 0$, then  $u_n^+ \rightharpoonup  0$ in $H_Z^1(\G)$, so that		
\[
\inf_{v \in \mathcal{M}_Z} J(v) = \liminf_{n \rightarrow \infty} J(u_n)
\ge  \liminf_{n \rightarrow \infty} J(u_n^+) 
+ \inf_{v \in \mathcal{N}_Z} J(v)
\ge J^{\infty}(\G; Z) + \inf_{v \in \mathcal{N}_Z} J(v),
\]
by definition of $J^{\infty}(\G; Z)$, which contradicts \eqref{levelM}.
In the same way one proves that $u^- \not\equiv 0$.
As $J'(u) = 0$, it follows that $u$ is a non-zero sign changing solution of \eqref{NLSdk}, and hence $u \in \MM_Z$.
Then by weak lower semicontinuity, we conclude that
\begin{equation*}
J(u) = \kappa \| u \|_{p}^{p} \le \kappa \liminf_{n \rightarrow \infty} \| u_n \|_{p}^{p}
= \liminf_{n \rightarrow \infty} J(u_n) = \inf_{v \in \MM_Z} J(v),
\end{equation*}
namely that $u$ is the required minimizer, i.e.\ a nodal ground state of \eqref{NLSdk}.
\end{proof}

\begin{proof}[Proof of Theorem~\ref{nonodal}] Let $u \in \MM_Z$. Since $u^\pm \in \NN_Z$,
\[
J(u) = J(u^+) + J(u^-) \ge 2 \inf_{v\in\NN_Z} J(v),
\]
which is \eqref{M2N}.

Now assume that $u \in \MM_Z$ satisfies
\begin{equation*}
J(u) = \inf_{v\in\mathcal{M}_Z} J(v) = 2 \inf_{v\in\mathcal{N}_Z} J(v).
\end{equation*}
Then $J(u^+) = J(u^-) = \inf_{v\in\NN_Z} J(v)$, and therefore $u^\pm$  are both ground states of $J$. As such,  by \cite[Proposition~3.3]{AST1}, they cannot vanish in 
$\G\setminus Z$, which is a contradiction since $u^\pm \not \equiv 0$.
\end{proof}

\section{Graphs with at least one half-line}
\label{half-lines}

In this section we discuss ground states and nodal ground states for noncompact graphs with at least one half-line. 
Since for such kind of graphs the bottom of the spectrum of $-{d^2/d}x^2$ on $H^1_Z(\G)$ always satisfies
\[
\omega_{Z}(\G)=0\,,
\]
all the results of this section will hold for every $\lambda\in(0,+\infty)$.

The prototype cases in this context are given by the real line and the half-line, about which everything is known (see e.g.~\cite{cazenave,LC}). Since the ground states on $\R$ play a very important role in what follows, we recall briefly their main features. On the real line the only nontrivial $L^2$ solutions to \eqref{eq:NLS} are called {\em solitons} and are unique up to translations and sign. 
Denoting by $\phi_\lambda$ 
the unique positive and even soliton, for every $\lambda > 0$ there results

\begin{equation*}
s_\lambda := J_\lambda(\phi_\lambda)
= \inf_{v \in \NN_\lambda(\R)} J_\lambda(v),
\end{equation*}
namely the solitons are the ground states on $\R$  (see e.g.~\cite[Proposition 3.12]{LC}). Similarly, on the half-line (with $Z = \emptyset$) there is a unique nontrivial $L^2$ solution (up to sign) to \eqref{eq:NLS}, given by the so-called {\em half-soliton} $\psi_\lambda$, i.e.\ the restriction of $\phi_\lambda$ to $\R^+$. It  is the ground state, and
\begin{equation}
\label{half}
J_\lambda(\psi_\lambda) = \inf_{v \in \NN_\lambda(\R^+)} J_\lambda(v) = \frac12 s_\lambda.
\end{equation}
If $Z= \left\{0\right\}$ (the vertex of the half-line) there are no nontrivial $L^2$ solutions to \eqref{eq:NLS}.

For a general graph with half-lines, a first marker of the importance of the level $s_\lambda$ is given by the following straightforward property. 

\begin{proposition}
\label{2preim} 
Let $\G\in{\bf G}$ contain at least one half-line and $\lambda>0$. Then
\begin{equation}
\label{double}
\frac12 s_\lambda \le \inf_{v\in\NN_Z} J(v) \le s_\lambda.
\end{equation}
\end{proposition}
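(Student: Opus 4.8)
The plan is to establish the two inequalities separately: the lower bound by a symmetrization argument onto a half-line, and the upper bound by an explicit test function concentrating a soliton far out along a half-line of $\G$.

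For the lower bound $\frac12 s_\lambda \le \inf_{v\in\NN_Z} J(v)$, I would fix an arbitrary $u \in \NN_Z$ and pass to its monotone (decreasing) rearrangement $u^*$ onto $\R^+$, which is available since a graph with a half-line has infinite total length. By equimeasurability one has $\|u^*\|_q = \|u\|_q$ for every $q$, while the Pólya--Szegő inequality on graphs gives $\|(u^*)'\|_{L^2(\R^+)} \le \|u'\|_{L^2(\G)}$. Introducing the Gagliardo--Nirenberg quotient $Q(v) := (\|v'\|_2^2 + \lambda\|v\|_2^2)/\|v\|_p^2$, these two facts yield $Q(u^*) \le Q(u)$, since the numerator does not increase while the denominator is preserved. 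As $u \in \NN_Z$, formula \eqref{formJ} rewrites the action as $J(u) = \kappa\, Q(u)^{p/(p-2)}$, so that $J(u) \ge \kappa\, Q(u^*)^{p/(p-2)} = J(\pr(u^*)) \ge \inf_{v\in\NN(\R^+)} J(v) = \frac12 s_\lambda$, the last equality being \eqref{half}. Since $u$ was arbitrary, the lower bound follows; note that this step does not use $Z$ except through the trivial inclusion $\NN_Z \subseteq \NN$.

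For the upper bound $\inf_{v\in\NN_Z} J(v) \le s_\lambda$, I would construct competitors supported on a single half-line. Parametrizing a half-line of $\G$ by $[0,+\infty)$, with $0$ the vertex joining it to the rest of $\G$, set $w_T(x) := \phi_\lambda(x - T)$ for $T > 0$. I would then modify $w_T$ on $[0,1]$ so that it vanishes at the junction (replacing it there by the affine interpolation between $0$ at $x=0$ and $w_T(1)$) and extend by $0$ to the rest of $\G$, obtaining $\tilde w_T \in H^1_Z(\G)\setminus\{0\}$: it is continuous, vanishes at the junction, and is $0$ on every vertex of $Z$, since it is supported away from them. Because $\phi_\lambda(T) \to 0$ as $T \to \infty$, the cut-off is negligible, and after the shift $x \mapsto x - T$ both $\|\tilde w_T'\|_2^2 + \lambda\|\tilde w_T\|_2^2$ and $\|\tilde w_T\|_p^p$ converge to the corresponding quantities for the full soliton $\phi_\lambda$ on $\R$. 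Projecting onto $\NN_Z$ and using that $\phi_\lambda \in \NN(\R)$, the continuity of $\pr$ gives $J(\pr(\tilde w_T)) \to \kappa\|\phi_\lambda\|_p^p = s_\lambda$, whence $\inf_{\NN_Z} J \le s_\lambda$.

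The routine parts are the cut-off estimate and the continuity of $\pr$, already recorded in the Preliminaries. The delicate point is the rearrangement step: it relies on the validity of the Pólya--Szegő inequality for the monotone rearrangement onto $\R^+$ on an arbitrary graph of class $\mathbf G$ with a half-line. This is the standard symmetrization tool in the theory, and I would invoke it with a citation rather than reprove it. One should only check that all constructions respect the Dirichlet set $Z$, which holds here because the rearranged competitor lives on $\R^+$ with no Dirichlet condition (so comparison with $\frac12 s_\lambda$ is legitimate) and the concentrating test function $\tilde w_T$ is supported away from $Z$.
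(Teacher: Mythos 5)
Your proof is correct and follows essentially the same route as the paper, which simply invokes ``rearrangement techniques as in \cite{AST1}'': the lower bound via the decreasing rearrangement onto $\R^+$ together with the P\'olya--Szeg\H{o} inequality (no preimage-counting hypothesis needed for that rearrangement) and comparison with the half-line level \eqref{half}, and the upper bound by transplanting a far-away soliton onto a half-line, cutting off at the junction, and projecting onto $\NN_Z$. You have merely written out in detail what the paper leaves to the cited reference; the only cosmetic point is that the rearrangement should be applied to $|u|$, which changes nothing since all the norms involved are unaffected.
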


\begin{proof} 
The inequalities can be easily proved as in \cite{AST1}, using rearrangement techniques. 
\end{proof}
	
In the search for ground states, it is crucial to understand whether one can reverse the second inequality in \eqref{double}  (see e.g.~\cite{AST1, AST2}, in the context of energy ground states of prescribed mass).
In \cite{AST1} the authors individuated a topological condition on $\G$ under which this can actually be done. 
To state it we recall that $\V_\infty $ denotes the set of {\em vertices at infinity} of $\G$.
Note that every vertex at infinity is a vertex of the graph $\G$,
but is \emph{not} a point of the metric space $\G$.
The assumption introduced in \cite{AST1} is:
\begin{equation}
\tag{H}\label{H}
\begin{aligned}
&\text{for every $e\in \E$, every connected component of the graph $(\V,\E\setminus\{e\})$} \\
&\text{contains at least one vertex $\vv\in \V_\infty$.}
\end{aligned}
\end{equation}
In Theorem 2.3 of \cite{AST1} the authors proved that, if $\G$ satisfies assumption \eqref{H}, then for every $u \in H^1(\G)$ there results $\#u^{-1}(t) \ge 2$ for almost every $t \in (0, \|u\|_\infty)$. The main consequence of this (originally proved in \cite{AST1}
for the problem of prescribed mass ground states) is described in the following result.

\begin{theorem}[\hspace{1sp}\cite{DDGS}, Theorem 2.6]
\label{notatt}
If $\G\in{\bf G}$ satisfies assumption \eqref{H} and $\lambda>0$,  then
\begin{equation*}
\inf_{v\in\NN} J(v) = s_\lambda
\end{equation*}
and it is never achieved, unless $\G$ is isometric to $\R$ or to a ``tower of bubbles'' shown in Figure \ref{fig:torri}.
\end{theorem}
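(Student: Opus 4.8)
\textbf{Proof proposal for Theorem~\ref{notatt}.}

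The plan is to deduce the statement from the two ingredients already quoted in the excerpt: first, that under assumption \eqref{H} every $u\in H^1(\G)$ satisfies $\#u^{-1}(t)\ge 2$ for a.e.\ $t\in(0,\|u\|_\infty)$ (Theorem~2.3 of \cite{AST1}); second, the two-sided bound $\tfrac12 s_\lambda\le\inf_{v\in\NN}J(v)\le s_\lambda$ from Proposition~\ref{2preim} (with $Z=\emptyset$ here). The heart of the matter is to upgrade the preimage-counting fact into the sharp inequality $\|u'\|_2^2\ge 2\,\|(\phi_\lambda)'\|_2^2$ and $\|u\|_q^q\le 2\,\|\phi_\lambda\|_q^q$-type comparisons, i.e.\ a symmetrization of $u$ onto $\R$ that doubles it. Concretely, I would associate to $u\in\NN(\G)$ its decreasing rearrangement onto the real line in the spirit of \cite{AST1,AST2}: because the level sets of $u$ have at least two points, the rearranged function $\hat u$ on $\R$ can be taken to satisfy $\|\hat u\|_r=2^{1/r}\|u\|_r$ for every $r$ (or, more precisely, one uses the Polya--Szeg\H{o} inequality on graphs to get $\|\hat u'\|_2^2\le\tfrac12\|u'\|_2^2$ after the doubling normalization). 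Then, projecting $\hat u$ onto $\NN(\R)$ and using that $J_\lambda$ is, on the Nehari manifold, a monotone function of $\|u'\|_2^2+\lambda\|u\|_2^2$ over $\|u\|_p^p$ raised to a fixed power (see \eqref{formJ}), one obtains $J_\lambda(u)\ge 2\,J_\lambda(\psi_\lambda)=s_\lambda$ by \eqref{half}. Combined with the upper bound in \eqref{double}, this forces $\inf_{v\in\NN}J(v)=s_\lambda$.

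Next I would address attainment. Suppose $u\in\NN(\G)$ realizes the infimum $s_\lambda$. Then all the inequalities in the rearrangement chain above must be equalities. Equality in Polya--Szeg\H{o} on a graph is rigid: it forces, for a.e.\ level $t$, that $u^{-1}(t)$ consists of \emph{exactly} two points and that $u'$ does not vanish on the superlevel sets, which pins down the structure of $\G$ near the support of $u$. A standard argument (carried out in \cite{AST1} for the mass-constrained problem, adapted here by \cite{DDGS}) then shows that $\G$ restricted to $\{u\neq 0\}$ must look like $\R$ with, possibly, a finite collection of ``bubbles'' attached at points where the two preimages collide --- i.e.\ a tower of bubbles as in Figure~\ref{fig:torri} --- and that on any other part of $\G$ the function vanishes, which by connectedness and \eqref{H} means $\G$ itself is $\R$ or such a tower. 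Conversely, on $\R$ the soliton attains $s_\lambda$, and on a tower of bubbles the ground state level is still $s_\lambda$ and is attained (this is exactly the exceptional case identified in Example~2.4 of \cite{AST1}), which is why these graphs must be excluded from the nonattainment claim.

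The main obstacle I anticipate is the rigidity analysis of the equality case: turning ``equality in the rearrangement/Polya--Szeg\H{o} estimate'' into the precise topological conclusion that $\G$ is isometric to $\R$ or a tower of bubbles. This requires a careful local study of how the level sets of a would-be minimizer sit inside $\G$, ruling out any edge or cycle that would either contribute strictly positive extra kinetic energy or break the exact ``two preimages'' count. Fortunately, since the theorem is attributed verbatim to \cite[Theorem~2.6]{DDGS}, which in turn imports the machinery of \cite{AST1}, I would not reprove this from scratch: the proof reduces to checking that the passage from the prescribed-mass setting of \cite{AST1} to the action setting is harmless, the only genuinely new point being the replacement of the $L^2$-constraint by the Nehari constraint, handled through the projection $\pi_\lambda$ and the formula \eqref{formJ} for $J_\lambda$ on $\NN$. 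Thus the write-up is essentially: (1) recall the doubling rearrangement of \cite{AST1}; (2) combine with \eqref{double} and \eqref{half} to get the value $s_\lambda$; (3) invoke the equality-case rigidity of \cite{AST1,DDGS} for the nonattainment, noting the two exceptional families.
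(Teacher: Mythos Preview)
The paper does not prove Theorem~\ref{notatt}: it is quoted verbatim from \cite[Theorem~2.6]{DDGS} and used as a black box. You correctly recognize this and sketch the underlying argument, which is indeed the rearrangement mechanism of \cite{AST1} transported to the Nehari setting via $\pi_\lambda$ and \eqref{formJ}, exactly as \cite{DDGS} does.

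One minor imprecision worth flagging: in the rearrangement step you write that $\|\hat u\|_r = 2^{1/r}\|u\|_r$ and $\|\hat u'\|_2^2 \le \tfrac12\|u'\|_2^2$ ``after doubling normalization''. In the actual argument the symmetric decreasing rearrangement $\hat u$ onto $\R$ \emph{preserves} all $L^r$ norms and satisfies $\|\hat u'\|_2 \le \|u'\|_2$; the role of the condition $\#u^{-1}(t)\ge 2$ is precisely to make this Polya--Szeg\H{o} inequality hold when the target is $\R$ rather than $\R^+$. One then gets $n_\lambda(\hat u)\le 1$, hence $J(u)=\kappa\|u\|_p^p=\kappa\|\hat u\|_p^p\ge J(\pi_\lambda(\hat u))\ge s_\lambda$ directly, without passing through $2J(\psi_\lambda)$. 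Your equality-case discussion is on target: rigidity in Polya--Szeg\H{o} forces exactly two preimages at a.e.\ level, and the classification of such graphs under \eqref{H} as $\R$ or a tower of bubbles is carried out in \cite{AST1} and adapted in \cite{DDGS}.
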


In this paper the setting is different from that of \cite{AST1} and \cite{DDGS} for at least two reasons: first,  the boundary conditions are more general and the presence of the set $Z$ must be taken into account; second, we are also interested in nodal ground states. For these reasons it is convenient to reformulate and generalize assumption \eqref{H} in a form that is more suited to handle the questions under study.  As in the Introduction, consider the set
\[
F(\G) = \bigl\{ e \in \E \bigm| \text{at least one connected component of } (\V, \E\setminus \{e\})  \text{ has no vertices in } \V_\infty \cup Z\bigr\}
\]
and the  assumptions
\begin{align}
&\# F (\G)= 0 \tag{H0}\label{H0},\\
&\# F(\G) \le 1 \tag{H1}\label{H1}.
\end{align}
Note that \eqref{H0} and \eqref{H1} are, respectively, the assumptions in (\textit{i}) and (\textit{ii}) in Theorem \ref{thm:nonexhalf}.
From now on, with some abuse of notation, we denote the graph $(\V, \E\setminus \{e\})$ simply by $\G\setminus e$.

To investigate the relations between assumptions \eqref{H0} and \eqref{H}, it is convenient to define a new graph $\widetilde \G$ in the following way. If $Z = \emptyset$, we set $\wG = \G$. Otherwise, we replace every (finite) edge $e$ ending at a vertex of $Z$ by a half-line, still called $e$.
We obtain in this way a new graph $\wG = (\widetilde\V, \widetilde\E)$ that has the same number of vertices and edges as $\G$. The only difference is that edges of $\G$ terminating at vertices of $Z$ are replaced, in $\wG$, by half-lines terminating at vertices in $\widetilde \V_\infty$. 

Then it is easily seen that
\begin{equation}
\label{H0H}
\G \text{ satisfies \eqref{H0}} \quad
\Longleftrightarrow 
\quad \wG \text{ satisfies \eqref{H}}.
\end{equation}
Indeed, to say that $\G$ satisfies \eqref{H0} means that there are no edges in $\E$ whose removal generates a connected component without vertices in $\V_\infty \cup Z$, namely that for every $e \in \E$, every connected component of $\G\setminus e$ has a vertex in $\V_\infty \cup Z$. But this, read on $\wG$, means that every connected component of $\wG\setminus e$ has a vertex in $\widetilde\V_\infty$, which is \eqref{H} for $\wG$.

Furthermore, to say that $\# F(\G) = 1$, namely that $F(\G) = \{e\}$ for exactly one edge $e$, means that the graph $\G \setminus e$ decomposes as 
\begin{equation}
\label{decompo}
\G\setminus e= \G_K \cup \G',
\end{equation}
where $\G_K$ is connected and has no vertices in $\V_\infty\cup Z$, while $\G'$ is connected and contains {\em all} the vertices of $\V_\infty \cup Z$. Also, there are no edges other than $e$ that permit a decomposition like \eqref{decompo}.
We note that, as $\G$ has at least one half-line, the unique $e \in F(\G)$ can never have a vertex in $Z$. 
However, $e$ can be a half-line. In this case, though, $\G \setminus e= \G_K \cup  \{\vv_\infty\}$, where $\vv_\infty$ is the vertex at infinity of the half-line $e$. Thus in this case the graph $\G$ is made of a set of bounded edges without vertices in $Z$ and a {\em single} half-line attached to it.

The next result plays a key role in the proof of some of the subsequent results. Roughly, it states that any graph satisfying $\# F(\G) = 1$ can be turned into a graph satisfying \eqref{H0} by attaching to it a suitable half-line.

\begin{lemma}
\label{topolemma}
Let $\G\in {\bf G}$ be a graph with at least one half-line satisfying $\# F(\G) = 1$. Let $e$ be such that $F(\G) = \{e\}$ and 
$\G_K$ be the connected component of $\G \setminus e$ as in \eqref{decompo}. Choose a vertex $\vv$ in $\G_K$ and define a new graph $\wG_{\vv}$ by\footnote{Shorthand for $(\V \cup \{\vv_\infty\}, \E\cup \{h\})$, where $\vv_\infty$ is the vertex at infinity of $h$.}  $\wG_{\vv} = \G \cup h$, where $h$ is a half-line attached at $\vv$. Then $\wG_{\vv}$ satisfies~\eqref{H0}.
\end{lemma}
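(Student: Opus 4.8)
The plan is to show that after attaching the half-line $h$ at the vertex $\vv \in \G_K$, no edge of the enlarged graph $\wG_\vv$ can disconnect it into a component missing all vertices of $\widetilde\V_\infty \cup Z$ (where now $\widetilde\V_\infty$ also contains the vertex at infinity $\vv_\infty$ of $h$). Equivalently, I must verify that $F(\wG_\vv) = \emptyset$, i.e.\ that $\wG_\vv$ satisfies \eqref{H0}. I would argue by cases on the location of a hypothetical bad edge $f \in F(\wG_\vv)$.

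First I would dispose of $f = h$: removing the half-line $h$ from $\wG_\vv$ gives back $\G$ together with the isolated vertex $\vv_\infty$; the component containing $\vv$ is all of $\G$, which by hypothesis (as $\G$ has a half-line) has vertices in $\V_\infty$, hence in $\widetilde\V_\infty \cup Z$, and the component $\{\vv_\infty\}$ is itself a vertex at infinity, so $h \notin F(\wG_\vv)$. Next, suppose $f \in \E$ is an edge of the original $\G$. Removing $f$ from $\wG_\vv$ produces the same components as removing $f$ from $\G$, except that the component of $\G\setminus f$ containing $\vv$ additionally carries the half-line $h$ and thus the vertex at infinity $\vv_\infty \in \widetilde\V_\infty$. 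So if $f$ were bad for $\wG_\vv$, the bad component would have to be a component of $\G\setminus f$ \emph{not} containing $\vv$, and that component would have no vertices in $\V_\infty \cup Z$ — which means $f \in F(\G)$, forcing $f = e$. It remains to examine $f = e$. By the decomposition \eqref{decompo}, $\G\setminus e = \G_K \cup \G'$ with $\G_K$ containing $\vv$ (by the choice made in the statement) and no vertices in $\V_\infty\cup Z$, while $\G'$ contains all of them. In $\wG_\vv \setminus e$ the component $\G_K$ now also contains the half-line $h$, hence the vertex $\vv_\infty \in \widetilde\V_\infty$, so it is no longer bad; and $\G'$ still contains every vertex of $\V_\infty \cup Z \subseteq \widetilde\V_\infty \cup Z$. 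Thus $e \notin F(\wG_\vv)$ either. Having exhausted all edges, $F(\wG_\vv) = \emptyset$, which is \eqref{H0}.

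The only genuinely delicate point is bookkeeping the connected components correctly when one removes an edge: one must be careful that attaching a half-line at an \emph{interior} vertex of one component of $\G\setminus f$ does indeed land $\vv_\infty$ in that component and nowhere else, and that the vertex $\vv$ chosen in the statement lies in $\G_K$ — which is exactly the hypothesis, so there is no real obstacle, just a case analysis that needs to be stated cleanly. I would also remark that membership in $\bf G$ is preserved (the half-line has length $+\infty \ge \ell$, $\vv$ gains degree one, everything stays connected and countable), so $\wG_\vv \in \bf G$ and the conclusion \eqref{H0} is meaningful.
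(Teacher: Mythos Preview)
Your proof is correct and follows essentially the same case analysis as the paper's own argument: both proceed by showing that a hypothetical edge in $F(\wG_\vv)$ can be neither $h$, nor $e$, nor any other edge of $\G$. Your organization is in fact slightly more streamlined than the paper's (you treat all $f\in\E$ uniformly and specialize to $f=e$ at the end, rather than handling $f=e$ and $f\ne e,h$ as separate cases), but the underlying idea is identical.
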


\begin{proof} Let $\vv_\infty$ be the vertex at infinity of $h$ and assume by contradiction that $\#F(\wG_{\vv}) \ge 1$, namely that there exists $\we \in F(\wG_{\vv})$. We claim that $\we \ne h$. Indeed, removing $h$ from $\wG_{\vv}$ would leave $\vv_\infty$ isolated, splitting $\wG_{\vv}\setminus h$ into the two connected components $\G$ and $\{\vv_\infty\}$. Since both of them contain vertices in $\widetilde{\V}_\infty$, this violates the definition of $\we$. Similarly, it cannot be $\we = e$:  removing $e$ from $\wG_{\vv}$, and recalling that $h$ is attached to $\G_K$, would decompose $\wG_{\vv}$ into connected components as $(\G_K \cup h) \cup \G'$, violating again the definition of $\we$ as before.
We are left with the case where $\we$ is different from both $h$ and $e$. In this case we have the decomposition
\[
\wG_{\vv} \setminus \we = : \wG_K \cup \wG',
\]
with obvious meaning of the symbols. Notice that, by construction, the half-line $h$ is attached to $\wG'$. Removing $h$ {\em and} $\vv_\infty$ from $\wG'$ does not disconnect it and, since $\wG'$ contains at least another vertex in $\V_\infty\cup Z$, we see that 
$\wG' \setminus (\{\vv_\infty\}, \{h\} )$ is not empty. Therefore $\wG_K$ and  $\wG' \setminus (\{\vv_\infty\}, \{h\} )$ are both nonempty, connected,  disjoint and their union is $\G\setminus {\we}$, namely $\we \in F(\G)$. Since we also have $e \in F(\G)$, this shows that $\#F(\G) \ge 2$, violating the assumption.
\end{proof}

We can now prove that the assumptions \eqref{H0} and \eqref{H1} are sufficient to rule out the existence of ground states and nodal ground states respectively, as stated in Theorem \ref{thm:nonexhalf}.

\begin{proof}[Proof of Theorem \ref{thm:nonexhalf}]
We split the proof into two parts.

\emph{Part 1: proof of \eqref{NZlevel0} and \eqref{NZlevel}}.
Of course it is sufficient to work with nonnegative functions, which we do without further warnings. Since $\G$ contains at least one half-line, Proposition~\ref{2preim} guarantees that $\inf_{v\in\NN_Z}J(v)\leq s_\lambda$. To prove the reverse
inequality under assumption \eqref{H0},  let $\wG$ be the graph defined after the statement of assumptions \eqref{H0}--\eqref{H1}.

Since, as metric spaces, $\G \subseteq \widetilde \G$, every function $u \in H^1_Z(\G)$ extended  by $0$ on $\wG \setminus \G$ can be seen as a function $\wu \in H^1(\wG)$. Plainly,
\begin{equation*}
\|\wu\|_{L^q\left(\wG\right)} = \|u\|_{L^q(\G)} \quad \text{ for  every } q \in [1,+\infty],\qquad 
\|\wu\hspace{1pt}'\|_{L^2\left(\wG\right)} = \|u'\|_{L^2(\G)}.
\end{equation*}
This implies that
$\widetilde u\in\NN(\wG)$ and since $\wG$ satisfies \eqref{H} (because $\G$ satisfies \eqref{H0}, see \eqref{H0H}),
\[
J(u) = \kappa \|u\|_{L^p(\G)}^p = \kappa \|\widetilde u \|_{L^p(\wG)}^p = J(\widetilde u) \ge \inf_{v\in\NN(\wG)} J(v)  = s_\lambda
\]
by Theorem \ref{notatt}.
As this holds for every (nonnegative) $u \in \NN_Z$,  \eqref{NZlevel} is proved.

Assume now that for some nonnegative $u \in \NN_Z$ we have $J(u) = s_\lambda$. Considering, as above, the function $\widetilde u \in \NN(\widetilde \G)$, we see that $J(\widetilde u) = J (u) = s_\lambda$, namely that $\widetilde u$ is a ground state for $J$ on $\NN(\wG)$. As such, $\widetilde u(x) >0$ for every $x \in \widetilde \G$, which shows that $Z= \emptyset$, namely that $\widetilde \G = \G$. We then conclude by Theorem \ref{notatt}.

\emph{Part 2: proof of \eqref{MZlevel0} and \eqref{MZlevel}} We first prove \eqref{MZlevel0}. By density, for every $\eps>0$ there exists a nonnegative $u_1\in \NN_Z(\G)$ with compact support such that
\begin{equation*}
J(u_1) \le  \inf_{v\in\mathcal{N}_Z(\G)} J(v) + \eps.
\end{equation*}
Similarly, there exists a nonnegative $u_2\in \NN(\R)$ with compact support such that $J(u_2) \le s_\lambda +\eps$. By taking a translation of $u_2$ (if necessary), we can make sure that its support, identified with an interval on some half-line of $\G$, does not intersect the support of $u_1$. We  then define $w \in H^1(\G)$ by
\[
w(x) = \begin{cases} u_1(x) & \text{ if } x \in \G \setminus \supp(u_2), \\
-u_2(x)  & \text{ if } x \in  \supp(u_2). \end{cases}
\]
Obviously, $w \in \MM_Z$ and
\[
J(w)  = J(u_1) + J(u_2) \le s_\lambda   + \inf_{v\in\mathcal{N}_Z} J(v) + 2\eps.
\]
Since $\eps$ is arbitrary, we conclude.

We now prove   the reverse inequality in \eqref{MZlevel} under assumption \eqref{H1}. If $\#F(\G) = 0$, then $\G$ satisfies \eqref{H0} and Theorem \ref{thm:nonexhalf}~(i)  shows that $\inf_{v\in\NN_Z} J(v) = s_\lambda$, so that the inequality to be proved reads $\inf_{v\in\MM_Z} J(v) \ge 2s_\lambda$. Given any $u \in \MM_Z$,  applying Theorem \ref{thm:nonexhalf}~(i) to $u^+$ and $u^-$ immediately yields
\[
J(u) = J(u^+) + J(u^-) > 2 s_\lambda\,,
\]
the inequality being strict since both $u^+$ and $u^-$ vanish somewhere on $\G$. This ensures that nodal ground states do not exist in this case.
 
Suppose now that $\#F(\G) = 1$. Let $e$ be the unique element of $F(\G)$ and consider the decomposition \eqref{decompo}:
\[
\G\setminus e = \G_K \cup \G'.
\]
Given a vertex $\vv$ of $\G_K$, for every $u \in \MM_Z$, at least one among $u^+$ and $u^-$, say $u^+$, vanishes at $\vv$. Let $\wG$ be the graph constructed in Lemma \ref{topolemma}, obtained attaching to $\vv$ a half-line $h$. Since $u^+$ vanishes at $\vv$, it can be extended to a function ${\wu}^+$ simply by defining it to be $0$ on $h$. Clearly, $\wu^+ \in \NN_Z(\wG)$ and, since $\wG$ satisfies \eqref{H0} by Lemma \ref{topolemma}, there results $J(\wu^+) \ge s_\lambda$. Then
\[
J(u) = J(u^+) + J(u^-) = J(\wu^+) + J(u^-) \ge s_\lambda + \inf_{v\in\NN_Z} J(v),
\]
concluding the proof of \eqref{MZlevel}.

It remains to show that the infimum is not achieved when $\#F(\G)=1$. To this end, it suffices to observe that the inequality used above, $J(\wu^+) \ge s_\lambda$, is in fact strict. Indeed if $J(\wu^+) = s_\lambda = \inf_{v\in\NN(\wG)} J(v)$, then $\wu^+$ is a ground state on $\wG$, and hence it cannot vanish anywhere, contrary to the fact that $\wu^+ \equiv 0$ on $h$.
\end{proof}

\begin{remark}
	\label{sharp}
	The assumptions of Theorem  \ref{thm:nonexhalf} are sharp. Indeed, Theorem \ref{existex} below shows that there exist graphs $\G$ satisfying $\#F(\G)\ge 1$ that admit ground states, while in Theorem \ref{thm:ex_NGS} and Remark \ref{Fge2} we exhibit graphs $\G$ satisfying $\#F(\G) \ge 2$ that admit nodal ground states.
\end{remark}

Theorem \ref{thm:nonexhalf} shows that nonexistence of ground states or nodal ground states can be determined by purely topological properties of the graph. The situation for existence is, on the contrary, more involved. 

In some cases,  existence results for ground states based solely on the topology of the graph can be easily obtained  when $Z=\emptyset$, as for example if $\G$ has a finite number of edges. In this respect, there is not much to say since the techniques developed in \cite{AST1} for the problem of prescribed mass minimizers of the energy work in the present setting as well, as we now briefly show.
First of all, since we are going to use Theorem \ref{groundandnodal}, we  prove the following characterization of $J^\infty(\G;Z)$.

\begin{proposition}
\label{prop:level_infinity_finitely_many_edges}
Let  $\G\in {\bf G}$ be a noncompact graph with a finite number of edges and $\lambda>0$. Then
\begin{equation}
\label{inftylevel}
J_{\lambda}^\infty(\G; Z) = s_\lambda.
\end{equation}
\end{proposition}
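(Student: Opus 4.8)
The goal is to compute the level at infinity $J_\lambda^\infty(\G;Z)$ for a noncompact graph $\G$ with finitely many edges, showing it equals $s_\lambda$. The plan is to prove the two inequalities separately.

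\emph{The inequality $J_\lambda^\infty(\G;Z)\le s_\lambda$.} First I would exhibit a sequence $(u_n)_n\subseteq\NN_{\lambda,Z}(\G)$ with $u_n\weak 0$ in $H^1_Z(\G)$ and $J_\lambda(u_n)\to s_\lambda$. Since $\G$ has finitely many edges and is noncompact, it must possess at least one half-line $h$. On $h\cong\R^+$ place translated copies of (a compactly supported approximation of) the soliton $\phi_\lambda$, pushed to infinity along $h$: concretely, take a compactly supported $\overline u\in\NN_\lambda(\R)$ with $J_\lambda(\overline u)\le s_\lambda+\eps$, and set $v_n$ to be $\overline u$ translated by $n$ along $h$ (for $n$ large its support lies in $h$), restricted to $\G$, and projected onto $\NN_{\lambda,Z}(\G)$ via $\pi_\lambda$. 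Because the supports escape to infinity, $\pi_\lambda(v_n)\to 1$ and $v_n\weak 0$ in $H^1$, while $J_\lambda(\pi_\lambda(v_n))\to J_\lambda(\overline u)\le s_\lambda+\eps$. Letting $\eps\to 0$ through a diagonal argument gives $J_\lambda^\infty(\G;Z)\le s_\lambda$.

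\emph{The inequality $J_\lambda^\infty(\G;Z)\ge s_\lambda$.} Let $(u_n)_n\subseteq\NN_{\lambda,Z}(\G)$ with $u_n\weak 0$ and $L:=\liminf_n J_\lambda(u_n)<\infty$; pass to a subsequence realizing the liminf. By Proposition~\ref{boundedness} the sequence is bounded in $H^1(\G)$ and $\inf_n\|u_n\|_\infty>0$, so there is $\delta>0$ and, along a further subsequence, points $x_n\in\G$ with $|u_n(x_n)|\ge\delta$. The key point is that $\G$ has finitely many edges: each half-line can be parametrized as $\R^+$, so after reducing to a subsequence either (a) all $x_n$ lie in the fixed compact core (the union of bounded edges) — but then, by compact embedding $H^1(\K)\hookrightarrow C(\K)$ on that compact part, $u_n\to 0$ uniformly there (using $u_n\weak 0$), contradicting $|u_n(x_n)|\ge\delta$; or (b) all $x_n$ lie on a single half-line $h$ with $x_n\to\infty$ along $h$. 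In case (b), translate: set $\we_n(x):=u_n(x_n+x)$ for $x\in\R$, extending by reflection or simply viewing $h$ near $x_n$ as a long interval inside $\R$ for $n$ large (the adjacent core shrinks away as $x_n\to\infty$). Then $\we_n$ is bounded in $H^1_{loc}(\R)$ with $|\we_n(0)|\ge\delta$, so up to a subsequence $\we_n\weak w$ in $H^1_{loc}(\R)$ with $w\ne 0$. Lower semicontinuity of the $H^1$ and $L^2$ norms and Fatou/weak convergence on the $L^p$ term, combined with the structure of $J_\lambda$ on $\NN$ (equation~\eqref{formJ}), give $\liminf_n J_\lambda(u_n)\ge J_\lambda(\pi_\lambda(w))\ge\inf_{v\in\NN_\lambda(\R)}J_\lambda(v)=s_\lambda$, where one must check $w$ solves the limiting equation on $\R$ (or at least lies in $\NN_\lambda(\R)$ after normalization), which follows from passing to the limit in the weak formulation edge by edge — the finiteness of the edge set ensures no mass is lost to ``infinitely many pieces.''

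\emph{Main obstacle.} The delicate step is case (b): rigorously transferring the problem from the half-line $h\subseteq\G$ to the whole line $\R$ and showing that the weak limit $w$ captures enough of the energy, i.e.\ that $\liminf_n J_\lambda(u_n)\ge s_\lambda$ and not merely $\ge\tfrac12 s_\lambda$. One must rule out the scenario where the concentration happens ``at the endpoint'' of a half-line in a way that only produces a half-soliton; this is exactly why $x_n\to\infty$ matters — as $x_n\to\infty$ the relevant piece of $\G$ around $x_n$ looks like a full line (both directions are genuine half-line, or rather the half-line extends arbitrarily far in one direction and the bounded core is pushed to distance $\to\infty$ in the other), so the limiting domain is all of $\R$ and the bound $s_\lambda$ (not $\tfrac12 s_\lambda$) is the correct one. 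A clean way to organize this is to invoke the analogue of Theorem~\ref{notatt}/the known behaviour on $\R$, or to cite the rearrangement and decomposition arguments of \cite{AST1} adapted to the action functional as in \cite{DDGS}.
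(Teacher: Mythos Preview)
Your upper bound is essentially the paper's argument and is fine. The lower bound, however, has a genuine gap. The sequences $(u_n)_n$ appearing in the definition of $J_\lambda^\infty(\G;Z)$ are \emph{arbitrary} elements of $\NN_{\lambda,Z}(\G)$ converging weakly to zero; they are not solutions of \eqref{NLSdk} nor Palais--Smale sequences. Membership in the Nehari manifold gives only the scalar relation $J_\lambda'(u_n)u_n=0$, not $J_\lambda'(u_n)\phi=0$ for test functions $\phi$. There is therefore no ``weak formulation edge by edge'' to pass to the limit in, and your translated weak limit $w$ has no reason to solve the equation on $\R$ or even to satisfy $n_\lambda(w)\le 1$. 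Without that you cannot deduce $\liminf_n J_\lambda(u_n)\ge J_\lambda(\pr(w))$: Fatou only gives $\liminf_n J_\lambda(u_n)=\kappa\liminf_n\|u_n\|_p^p\ge\kappa\|w\|_p^p$, and $\kappa\|w\|_p^p$ can be far below $s_\lambda$ for a nonzero $w$.

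The paper avoids concentration--compactness entirely via a truncation/rearrangement device. Replace $u_n$ by $|u_n|$ and let $\B$ be the (compact) union of the bounded edges; weak convergence yields $\eps_n:=\max_{\B}u_n\to 0$. Set $v_n:=(u_n-\eps_n)^+$. Then $v_n\equiv 0$ on $\B$, so $v_n$ is supported on the half-lines and vanishes at their attaching vertices; this forces $\#v_n^{-1}(t)\ge 2$ for every $t\in(0,\max v_n)$. A direct estimate gives $n_\lambda(v_n)\le 1+o(1)$, and then the two--preimage rearrangement inequality \cite[Proposition~2.7]{DDGS} yields
\[
s_\lambda\le J_\lambda\bigl(\pr(v_n)\bigr)=\kappa\,n_\lambda(v_n)^p\|v_n\|_p^p\le\kappa(1+o(1))\|u_n\|_p^p=J_\lambda(u_n)+o(1).
\]
The idea you are missing is precisely this subtraction of $\eps_n$: it makes the two--preimage property automatic and allows a direct comparison with the soliton level, so that no weak limit on $\R$ ever needs to be produced or analyzed.
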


\begin{proof} By density, for every $\eps>0$ there exists $u=u_\eps \in \NN(\R)$ with compact support such that $J(u) \le s_\lambda + \eps$.
For every $n$ large enough, the function $u_n(x) = u(x -n)$ is supported in $\R^+$ and, as such, it can be seen as an element of $\NN_Z$ by placing its support on a half-line of $\G$ and then extending it by $0$ outside its support. Clearly $u_n \rightharpoonup 0$ in $H^1(\G)$ and
\[
\liminf_n J(u_n) \le s_\lambda + \eps.
\]
Since $\eps$ is arbitrary, the ``$\, \le \,$'' part is proved. For the reverse inequality,
let $(u_n)_n \subset \NN_Z$ be a
sequence converging weakly to $0$ in $H^1(\G)$ with $J_\lambda(u_n)\to J_\lambda^{\infty}(\G; Z)$. We can assume $u_n\ge 0$ for every $n$ since otherwise we replace it by 
$|u_n|$ that is still in $\NN_Z$. By weak convergence, we also see that $u_n \to 0$ in $L^\infty_{\text{loc}}(\G)$.
Hence, if $\eps_n$ denotes the maximum of $u_n$ on the set of all bounded edges of $\G$, clearly
$\eps_n\to 0$.
Therefore, letting $v_n:= (u_n - \eps_n)^+$, we see from Proposition \ref{boundedness} that $v_n \not\equiv 0$ for every $n$ large enough.
Since $\G$ contains at least one half-line, by construction $\#v_n^{-1} (t) \ge 2$ for every $t\in(0, \max v_n)$ and every $n$, since $v_n$ vanishes on the set of all bounded edges,
and the same holds for $\pr(v_n)$. So, by  \cite[Proposition 2.7]{DDGS}, $J(\pr(v_n)) \ge s_\lambda$.
Furthermore, as $n\to \infty$,
\begin{equation}
\label{nlambda}
n_\lambda(v_n)^{p-2} = \frac{\|v'_n\|_2^2 + \lambda \|v_n\|_2^2}{\|v_n\|_p^p} \le 
\frac{\|u'_n\|_2^2 + \lambda \|u_n\|_2^2}{\|u_n\|_p^p +o(1)} = 1 + o(1),
\end{equation}
entailing
\[
s_\lambda  \le J(\pr(v_n)) = J\bigl(n_\lambda(v_n)v_n\bigr) = \kappa n_\lambda(v_n)^p \|v_n\|_p^p \le \kappa(1+o(1)) \|u_n\|_p^p = J(u_n) + o(1),
\]
from which we obtain $\liminf_n J(u_n) \ge s_\lambda$. Since this holds for every sequence converging weakly to $0$, the proof is complete.
\end{proof}

\begin{remark}
\label{Remark 4.5}
The assumption that the graph  has a finite number of edges in Proposition \ref{prop:level_infinity_finitely_many_edges} cannot be removed. This can be seen considering for instance the following example.
On a real line we insert, for each integer $k\geq 1$, a node $\vv_k$ at the point of coordinate $k$ and  a terminal edge $L_k$ of length $k$, by identifying $\vv_k$ with an endpoint of $L_k$ (Figure \ref{bigsegments}).
By density, for every $\eps>0$ there exist $k \ge 1$ and $u_k\in \NN(\R^+)$ with compact support in $[0,k]$ such that $J(u_k) \le \frac{s_\lambda}{2} + \eps$. Since $u_k$ can be considered as a function on the edge $L_k$, we obtain a sequence $(u_k)_k \in \NN(\G)$ that converges weakly to $0$ and such that $\displaystyle \liminf_k J_{\lambda}(u_k)\leq \frac{s_\lambda}{2} + \eps$. This proves that 
$J_{\lambda}^{\infty}(\G)\leq \frac{s_\lambda}{2}$.
By Proposition \ref{2preim}, we obtain in fact
$J_{\lambda}^{\infty}(\G)= \frac{s_\lambda}{2}$.
\end{remark}

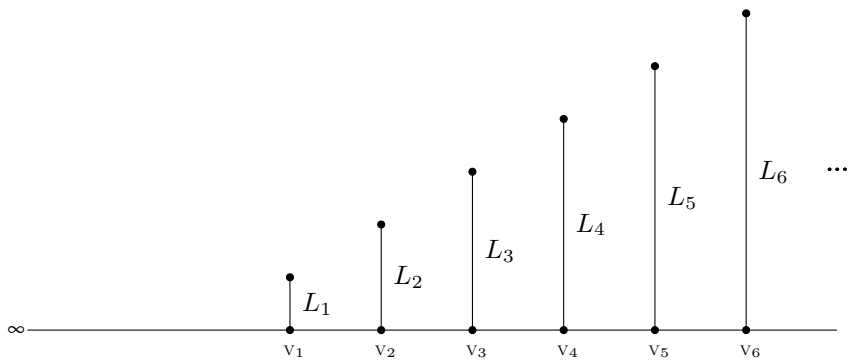
\begin{figure}[ht]
	\centering
	\begin{tikzpicture}[xscale=1.2,yscale=0.7]
		\node at (-3, 0) [infinito]  (0) {$\scriptstyle\infty$};
		\node at (6, 0) [infinito]  (100) {};
		\foreach \i in {1, ..., 6}
		{
			\node at (\i - 1, 0) [nodo] (1) {};
			\node at (\i - .95, -.1) [below] {$\scriptstyle \vv_\i$};
			\draw (\i - 1, 0) -- (\i - 1, \i);
			\node at (\i - 1, \i) [nodo] {};
			\node at (\i-.7, 0.5*\i) {$L_{\i}$};
		}
		\foreach \i in {1, ..., 7}
		\node at (6, 3) {$\cdots$};
		
		\draw [-] (0) -- (1);
		\draw [-] (100) -- (1);
	\end{tikzpicture}
	\caption{The graph $\G$ described in Remark~\ref{Remark 4.5}.}
	\label{bigsegments}
\end{figure}

Having established \eqref{inftylevel}, Theorem \ref{groundandnodal} yields existence of a ground state on a noncompact graph with a finite number of edges as soon as one can prove that $\inf_{v\in\NN_Z} J(v) < s_\lambda$. Observe that this condition is analogous to the one appearing in the fixed mass case. As we anticipated above, such inequality can be shown to hold for a number of graphs with $Z = \emptyset$ exploiting only topological properties, by the use of the ``graph surgery'' techniques developed in \cite{AST2}.

\begin{figure}[t]  
	\centering
	\begin{tikzpicture}[scale= 1]
		\node at (-3.5,0) [infinito]  (1) {$\scriptstyle\infty$};
		\node at (-1,0) [nodo] (2) {};
		\node at (1.5,0) [infinito]  (3) {$\scriptstyle\infty$};
		\node at (-1,1) [nodo] (4) {};
		\draw [-] (1) -- (2) ;
		\draw [-] (2) -- (3) ;
		\draw [-] (2) -- (4) ;
		\node at (-1,-.6)  [minimum size=0pt] (10) {\footnotesize{(a)}};

		\node at (3,0) [infinito]  (5) {$\scriptstyle\infty$}; 
		\node at (5.5,0) [nodo] (6) {};
		\node at (8,0) [infinito]  (7) {$\scriptstyle\infty$};
		\node at (5.5,1) [nodo] (8) {};
		\draw [-] (5) -- (6) ;
		\draw [-] (6) -- (7) ;
		\draw [-] (6) -- (8) ;
		\draw(5.5,1.4) circle (0.4);
		\node at (5.5,-.6)  [minimum size=0pt] (11) {\footnotesize{(b)}};
		
		\node at (-3.6,-2) [infinito]  (1) {$\scriptstyle\infty$};
		\node at (1.1,-2) [nodo] (2) {};
		\draw [-] (1) -- (2) ;
		\draw(1.5,-2) circle (0.4);
		\node at (-1,-2.5)  [minimum size=0pt] (10) {\footnotesize{(c)}};
		
		\node at (3,-2) [infinito]  (5) {$\scriptstyle\infty$}; 
		\node at (7,-2) [nodo] (6) {};
		\node at (7.5,-2.5) [nodo]  (7) {};
		\node at (8,-1.6) [nodo] (8) {};
		\node at (8.6,-2) [nodo] (9) {};
		\draw [-] (5) -- (6) ;
		\draw [-] (6) -- (7) ;
		\draw [-] (6) -- (8) ;
		\draw [-] (6) -- (9) ;
		\node at (5.5,-2.5)  [minimum size=0pt] (11) {\footnotesize{(d)} };
	\end{tikzpicture}
	\caption{Some graphs with $Z=\emptyset$ admitting ground states. (a): line with a pendant; (b): signpost; (c): tadpole; (d): $3$--fork.}
	\label{exist}
	\label{gsgraphs}
\end{figure}
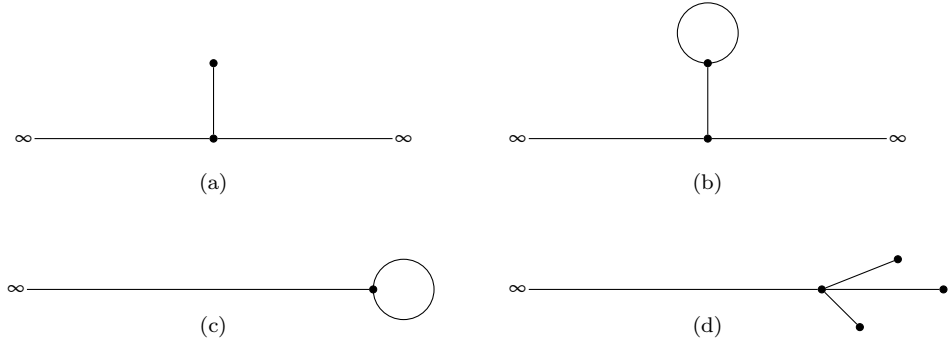 

\begin{theorem}
\label{existex} For every $\lambda>0$, every graph $\G$ depicted in Figure \ref{exist}, for every length of its edges, satisfies
\[
\inf_{v\in\NN} J(v) < s_\lambda
\]
and therefore admits a ground state.
\end{theorem}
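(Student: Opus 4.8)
The plan is to show, for each of the four graphs in Figure~\ref{exist}, that one can build a competitor $v\in\NN(\G)$ with $J(v)<s_\lambda$; by Proposition~\ref{prop:level_infinity_finitely_many_edges} we have $J_\lambda^\infty(\G;Z)=s_\lambda$, so Theorem~\ref{groundandnodal}(i) then gives a ground state. The natural competitor is obtained from the soliton $\phi_\lambda$ on $\R$ (or the half-soliton $\psi_\lambda$) by a \emph{graph surgery} argument in the spirit of \cite{AST2}: take a minimizing function on a simpler subgraph, cut it at a well-chosen point, and re-glue the pieces using the extra edge or pendant, arranging that the $L^p$ norm strictly increases while the Dirichlet energy plus mass does not increase too much. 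Concretely, on $\NN(\G)$ one has $J(v)=\kappa\bigl(\|v'\|_2^2+\lambda\|v\|_2^2\bigr)$, so it is convenient to work with the scale-invariant quotient and use the projection $\pr$: if one produces $w\in H^1(\G)$ with
\[
\frac{\|w'\|_2^2+\lambda\|w\|_2^2}{\|w\|_p^2}
<\frac{\|\phi_\lambda'\|_2^2+\lambda\|\phi_\lambda\|_2^2}{\|\phi_\lambda\|_p^2},
\]
then $J(\pr(w))<s_\lambda$ and we are done.

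First I would treat the line-with-a-pendant (a), which is the base case. Start from $\phi_\lambda$ living on the line $\R\subseteq\G$; it is positive and even, with maximum at the junction vertex $\vv$. Now ``open up'' a little mass onto the pendant edge: decrease $\phi_\lambda$ slightly on one side and transplant the removed piece onto the pendant, keeping continuity at $\vv$. Because $\phi_\lambda$ solves the ODE and is a critical point on $\R$, the first-order change in $\|w'\|_2^2+\lambda\|w\|_2^2$ is controlled, while the genuinely new contribution comes from the fact that the pendant carries extra mass where the function is near its maximum, strictly increasing $\|w\|_p^p$ to leading order — this is exactly the mechanism exploited in \cite{AST1,AST2}. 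The cleanest implementation is the ``cut-and-paste of a symmetric soliton at its peak'' trick: one shows that attaching any edge at the point where a competitor attains its maximum allows a strictly profitable modification. For the signpost (b) and tadpole (c), the extra structure (a circle glued at a vertex, resp.\ a self-loop) plays the same role as the pendant: it provides additional length on which to deposit mass near the top of the profile, and the $3$-fork (d) is handled as in (a) with a pendant of arbitrary short length, again using that the three short edges attached at a single vertex behave, to leading order, like extra room at the maximum. In each case the key point is that these are precisely the graphs for which $\#F(\G)\ge1$ in a way that the ``$F$-component'' can host part of a soliton: one places (most of) a half-soliton on a half-line and uses the compact part plus the remaining half-line to recover strictly more than $s_\lambda/2+s_\lambda/2$ worth of nonlinear energy, hence total $J<s_\lambda$.

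I expect the main obstacle to be making the ``strict inequality'' rigorous rather than merely ``$\le$'': one must exhibit an explicit one-parameter family $w_t$ of competitors with $\frac{d}{dt}\big|_{t=0^+}$ of the relevant quotient strictly negative, which requires a careful expansion of both numerator and denominator and a quantitative lower bound on the gain coming from the extra edge. For the tadpole and signpost there is the additional bookkeeping of continuity conditions at the vertex where the cycle is attached, and one has to be sure the modified function stays in $H^1$. Since the paper explicitly says ``the techniques developed in \cite{AST1} for the problem of prescribed mass minimizers of the energy work in the present setting as well, as we now briefly show,'' I would keep the argument short: reduce everything to the single statement ``if a competitor attains its maximum at a vertex of degree $\ge 3$ or at a vertex to which a pendant/loop is attached, then it is not optimal,'' prove that once via the surgery above, and then observe that each graph in Figure~\ref{exist} forces any symmetric soliton-like competitor into exactly that situation, so the infimum over $\NN(\G)$ is strictly below $s_\lambda$. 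Invoking Theorem~\ref{groundandnodal}(i) together with Proposition~\ref{prop:level_infinity_finitely_many_edges} then closes the proof.
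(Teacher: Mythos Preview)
Your overall strategy---build a competitor with $J<s_\lambda$, then invoke Proposition~\ref{prop:level_infinity_finitely_many_edges} and Theorem~\ref{groundandnodal}(i)---is correct and is exactly what the paper does. The paper's own proof is equally terse, simply deferring to \cite{AST2} with the phrase ``via rearrangement techniques.''

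Where your sketch diverges, and where there is a real gap, is in the mechanism for the strict inequality. You propose a perturbative argument: place $\phi_\lambda$ on the line and ``transplant a small piece'' onto the pendant, expecting a first-order gain. This runs into two problems. First, continuity at the junction is not free: you cannot fold a small piece of the soliton onto the pendant and shift the remainder of one half-line without creating a jump, since all three branches must share the value $M=\phi_\lambda(0)$. Second, and more seriously, the natural first-order variation vanishes: the soliton satisfies the exact identity $M^{p-2}=p\lambda/2$ (from the first integral of the ODE), so if you extend $\phi_\lambda$ by the constant $M$ onto a pendant of length $\ell$ and differentiate the quotient in $\ell$, you get zero at $\ell=0$; the second-order term is in fact \emph{positive}, so this particular competitor gives $J>s_\lambda$. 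Your anticipated ``leading-order gain in $\|w\|_p^p$'' is exactly cancelled.

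The approach in \cite{AST2} sidesteps this entirely by a direct, non-perturbative rearrangement: one constructs $u\in H^1(\G)$ \emph{equimeasurable} with $\phi_\lambda$ (hence same $L^2$ and $L^p$ norms) but arranged so that high level sets have only one preimage, on the pendant, instead of two. Concretely, for the line with a pendant of length $\ell$, set $u(x)=\psi_\lambda(x/2)$ on the pendant (tip at $x=0$) and $u(y)=\psi_\lambda(y+\ell/2)$ on each half-line. One checks continuity at the junction and computes
\[
\|u'\|_{L^2(\G)}^2=\|\phi_\lambda'\|_{L^2(\R)}^2-\tfrac{3}{2}\int_0^{\ell/2}(\psi_\lambda')^2<\|\phi_\lambda'\|_{L^2(\R)}^2,
\]
so $J(\pr(u))<s_\lambda$ immediately---no expansion, no small parameter. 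Analogous equimeasurable constructions handle the signpost, tadpole, and fork. That is the ``rearrangement'' the paper alludes to.
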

\begin{proof}
The inequality can be proved exactly as in \cite{AST2}, starting with a soliton on $\R$, via rearrangement techniques.
Existence of a ground state follows then from Theorem \ref{groundandnodal}.
\end{proof}

When $Z$ is not empty, the existence of a ground state is harder to obtain and further conditions of metrical nature have to be imposed. Indeed, the next theorem shows that, if a graph hosts a ground state, the diameter of the set $\B$ of all bounded edges  cannot be arbitrarily small. Recall that $\diam(\B)$ is given by the supremum of lengths of the shortest paths between any two points of $\B$.

\begin{theorem}
\label{thm:noex_GS}
There exists a constant $C>0$ depending only on $\lambda>0$ and $p$ such that, for every $\G\in {\bf G}$ with at least one half-line and  every  $Z \ne \emptyset$ such that  $\inf_{v\in\NN_{Z}}J(v)$ is achieved, we have
\[
  \diam(\B)\geq C,
\]
where, as above, $\B$ is the set of all bounded edges of $\G$.
\end{theorem}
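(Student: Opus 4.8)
The plan is to argue by contradiction: suppose that for a sequence of graphs $\G_k \in {\bf G}$, each with at least one half-line and a nonempty Dirichlet set $Z_k$, the infimum $\inf_{v\in\NN_{Z_k}} J(v)$ is achieved by some ground state $u_k$, while $\diam(\B_k) \to 0$, where $\B_k$ is the set of bounded edges of $\G_k$. Since a ground state $u_k$ is a solution of \eqref{NLSdk}, on every half-line of $\G_k$ it coincides (up to reflection) with a tail of the soliton $\phi_\lambda$, by the explicit ODE analysis on half-lines; in particular $u_k$ is bounded in $L^\infty$ by $\|\phi_\lambda\|_\infty$ uniformly in $k$, and the mass and energy carried on each half-line are controlled. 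The key point is that the presence of at least one Dirichlet vertex forces $u_k$ to vanish somewhere, and since $\diam(\B_k)$ is tiny, the value of $u_k$ everywhere on $\B_k$ is small: indeed any point of $\B_k$ is joined to a Dirichlet vertex by a path of length at most $\diam(\B_k)$ plus possibly one half-line crossing, but since $u_k$ restricted to a half-line decays and on $\B_k$ the oscillation is controlled by $\diam(\B_k)\|u_k'\|_\infty$, one gets $\max_{\B_k} u_k \to 0$ (here one needs the uniform $H^1$ and $L^\infty$ bounds from Proposition~\ref{boundedness}, together with the pointwise bound on $u_k'$ coming from the equation and the $L^\infty$ bound on $u_k$).

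Next I would derive a contradiction with the lower bound of Proposition~\ref{2preim}, namely $\inf_{v\in\NN_{Z_k}} J(v) \ge \tfrac12 s_\lambda$, by showing instead that $J(u_k) \to 0$ or at least that $J(u_k)$ drops strictly below $\tfrac12 s_\lambda$. The idea is that almost all of the ``action content'' of $u_k$ must live on the half-lines, since on $\B_k$ the function is uniformly small and $\B_k$ itself is small (small diameter, hence — once one also controls the number of edges, or more carefully, once one uses that a small-diameter connected set with edges of length $\ge \ell$ has boundedly many edges — small total length); but on each half-line the action of a solution is a half-soliton-type tail, and a tail starting from a small value $\eps_k \to 0$ has action tending to $0$. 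Summing over the (uniformly bounded number of) half-lines touching the small core $\B_k$, we conclude $J(u_k)\to 0$, contradicting $J(u_k) \ge \tfrac12 s_\lambda > 0$. Equivalently and perhaps more robustly, one can compare: replacing $u_k$ on a half-line by the full half-soliton would lower the action, showing $\inf_{v\in\NN_{Z_k}}J(v)$ equals essentially a sum of half-soliton actions over partial tails, which is bounded above by a quantity going to $0$.

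The main obstacle I anticipate is making rigorous the claim that $\max_{\B_k} u_k$ is controlled by $\diam(\B_k)$ \emph{uniformly over all graphs in the class}, since a priori $u_k$ could be large at the attachment vertices of half-lines and the Dirichlet vertex might be ``far'' in the sense that it is separated from the bulk of $\B_k$ by a half-line rather than by a path in $\B_k$. To handle this one uses the structure of $F(\G)$ and the fact that $Z\neq\emptyset$: a Dirichlet vertex lies in $\B_k$ (it is a degree-one finite-edge endpoint), so it \emph{is} a point of $\B_k$, and hence every point of $\B_k$ is within $\diam(\B_k)$ of it along a path inside $\B_k$; combined with $\|u_k'\|_{L^\infty(\B_k)} \le C(\lambda,p)$ (from the equation $u_k'' = \lambda u_k - |u_k|^{p-2}u_k$ and the uniform $L^\infty$ bound), this yields $\max_{\B_k} u_k \le C(\lambda,p)\,\diam(\B_k)$, and the rest follows. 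A secondary technical point is bounding the number of half-lines and edges meeting $\B_k$, which follows from $\ell_e \ge \ell > 0$ together with a uniform $H^1$ bound forcing the ``surviving'' structure to be finite; one should phrase the argument so that $C$ depends only on $\lambda$ and $p$, as claimed.
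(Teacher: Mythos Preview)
Your high-level instinct is right: once you know the ground state $u$ is forced to be small because of the nearby Dirichlet vertex, you contradict the uniform lower bound of Proposition~\ref{boundedness}. That is exactly the paper's route. But your execution has one real gap and two detours that make the argument break down.

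\textbf{The real gap: you never rule out a ``bump'' on a half-line.} You assert that on each half-line $u_k$ is a soliton tail ``starting from a small value $\eps_k$'', and hence carries vanishing action. But an $L^2$ positive solution of \eqref{eq:NLS} on a half-line can equally well be an increasing-then-decreasing piece of $\phi_\lambda$: it starts at the small value $u_k(\vv_h)=\eps_k$, climbs to $\|\phi_\lambda\|_\infty$, and then decays. Such a piece has action tending to $s_\lambda$, not to $0$, so $J(u_k)\not\to 0$ and your contradiction collapses. The paper disposes of this in one stroke: if the maximum of $u$ sat at a point $y$ on a half-line, then every level $t\in(0,\|u\|_\infty)$ would have at least two preimages (once between $y$ and $\infty$ along the half-line, once on a path from $y$ to a vertex of $Z$), whence $J(u)>s_\lambda$ by the rearrangement argument of \cite{AST1,DDGS}, contradicting \eqref{double}. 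This is precisely where $Z\neq\emptyset$ does its work, and it is the step you are missing.

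\textbf{Detour 1: $\|u'\|_{L^\infty}$ versus $\|u'\|_{L^2}$.} You propose to bound $\|u_k'\|_{L^\infty(\B_k)}$ from the equation and a ``uniform $L^\infty$ bound'' on $u_k$. But that $L^\infty$ bound on $\B_k$ is what you are trying to prove, so the argument is circular. The paper bypasses this entirely: once the maximum $\bar x$ is known to lie in $\B$, Cauchy--Schwarz from any $z\in Z$ gives
\[
\|u\|_\infty = |u(\bar x)-u(z)| \le \sqrt{\diam(\B)}\,\|u'\|_{L^2(\G)} \le \sqrt{\diam(\B)}\,\sqrt{s_\lambda/\kappa},
\]
using only \eqref{formJ} and \eqref{double}. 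No pointwise derivative control is needed.

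\textbf{Detour 2: counting half-lines and edges.} Your attempt to bound the number of half-lines meeting $\B_k$ via $\ell_e\ge\ell$ cannot work: a star with one short Dirichlet edge and arbitrarily many half-lines has small $\diam(\B)$ and unbounded half-line count, and any appeal to $\ell$ would make $C$ depend on $\ell$, contrary to the statement. The fix is that no such count is needed. Once $\|u\|_\infty$ is controlled, interpolate globally:
\[
C \le \|u\|_p^p \le \|u\|_\infty^{p-2}\|u\|_2^2 \le \frac{1}{\lambda}\,\diam(\B)^{\frac{p-2}{2}}\Bigl(\frac{s_\lambda}{\kappa}\Bigr)^{p/2},
\]
and the lower bound on $\diam(\B)$ follows immediately, depending only on $\lambda$ and $p$.
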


\begin{proof} Let  $u\in\NN_Z$ satisfy $J(u)=\inf_{v\in\NN_{Z}}J(v)$. As usual we can assume that $u\geq 0$. Arguing as in the proof of  \cite[Proposition 2.5]{AST2}, we see that if $u$ attains its maximum at $y$ on a half-line $h$, then $\# u^{-1}(t) \ge 2 $ for every  $0<t<\max u$. Indeed, $t$ is attained at least once on $h$ and once on any  path joining $y$ to a point in $Z$. Since this implies $J_{\lambda}(u)> s_{\lambda}$, we obtain a contradiction. Hence, $u$ attains its $L^\infty$-norm in $\mathcal{B}$ only.

 Let then $\bar x\in\mathcal{B}$ be such that $\|u\|_\infty=u(\bar x)$. By the Cauchy--Schwarz inequality, \eqref{formJ} and Proposition \ref{2preim}, letting $z$ be any vertex in $Z$, we have
	\[
	\|u\|_\infty=|u(\bar{x})|
    =|u(\bar{x})-u(z)|
    \leq\sqrt{\diam(\mathcal{B})}\|u'\|_{L^2(\G)}
    \leq \sqrt{\diam(\mathcal{B})}\sqrt{\frac{s_\lambda}{\kappa}}
	\]
	which, coupled with Proposition \ref{boundedness}, yields
	\[
	C\leq \|u\|_p^p\leq \|u\|_\infty^{p-2}\|u\|_2^2
    \leq \frac1\lambda \diam(\mathcal{B})^{\frac{p}2-1}
    \Bigl(\frac{s_\lambda}{\kappa}\Bigr)^{\frac{p}2},
	\]
	for a suitable constant $C>0$ depending on $\lambda$ and $p$ only, and we conclude.
\end{proof}
\begin{remark}
Comparing Theorems \ref{existex} and \ref{thm:noex_GS} highlights the effect of the set $Z$ on the existence of ground states. One may wonder whether Theorem \ref{thm:noex_GS} can be improved to obtain a universal lower bound involving only the total length of the edges with a vertex in $Z$, rather than  the whole set $\B$. However, this cannot be done in general: it is easy to exhibit graphs where  ground states do exist and the length of the edges with vertices in $Z$ is arbitrarily small. To see this, let $\G$ be any given graph with $Z=\emptyset$ and such that $\inf_{v\in\NN(\G)}J(v)<s_\lambda$ (e.g.\ any of the graphs in Figure \ref{exist}). Exploiting for instance the approximation procedure described in Remark \ref{approx}, one can construct a function $u\in\NN(\G)$ so that $J(u)<s_\lambda$ and the support of $u$ is contained in a suitable neighborhood of $\B$. In particular, there exists $M>0$ such that the restriction of $u$ to each half-line of $\G$ satisfies $u\equiv0$ on $[M, +\infty)$. For every $\ell>0$, let then $\G_\ell$ be the graph obtained by attaching a single edge of length $\ell$ at the point $x=M$ of one of the half-lines of $\G$, and assume that the vertex of degree 1 of this edge is the only vertex in $Z$. Clearly, one can think of $u$ as a function in $\NN_Z(\G_\ell)$ for every $\ell$, so that $\inf_{v\in \NN_Z(\G_\ell)}J(v)\leq J(u)<s_\lambda$, thus implying existence of ground states in $\NN_Z(\G_\ell)$ by Theorem \ref{groundandnodal} and Proposition \ref{prop:level_infinity_finitely_many_edges}.
\end{remark}

In the case of nodal ground states, it is not even needed to have $Z\neq\emptyset$ to recover the analogue of Theorem \ref{thm:noex_GS}.

\begin{theorem}
\label{thm:noex_NGS}
There exists a constant $C>0$ depending only on $\lambda>0$ and $p$ such that, for every $\G\in {\bf G}$ with at least one half-line and  every  $Z$ such that  $\inf_{v\in\MM_{Z}}J(v)$ is achieved, we have
\[
	\diam(\B)\geq C,
\]
where, as above, $\B$ is the set of all bounded edges of $\G$.
\end{theorem}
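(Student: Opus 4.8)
The plan is to repeat the proof of Theorem~\ref{thm:noex_GS} with the ground state there replaced by the positive part $u^+$ of a nodal ground state $u$, the vanishing of $u$ on $Z$ being now replaced by the vanishing of $u^+$ on the set $\{u<0\}$, which is nonempty because $u^-\not\equiv 0$ (so that, in contrast with Theorem~\ref{thm:noex_GS}, $Z$ need not be nonempty). The one new ingredient, which I would establish first, is the a~priori bound $J(u^\pm)\le s_\lambda$. Indeed, let $u\in\MM_Z$ realise $\inf_{\MM_Z}J$; then $u^\pm\in\NN_Z$, and since $(u^+)'$ and $(u^-)'$ have disjoint supports we have $J(u)=J(u^+)+J(u^-)$, while $J(u^\pm)\ge\inf_{\NN_Z}J$. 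Combining this with \eqref{MZlevel0} and Proposition~\ref{2preim},
\[
2\inf_{\NN_Z}J\ \le\ J(u^+)+J(u^-)\ =\ \inf_{\MM_Z}J\ \le\ s_\lambda+\inf_{\NN_Z}J,
\]
so that $J(u^+)\le s_\lambda$ and, symmetrically, $J(u^-)\le s_\lambda$.

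The core step is to show that $u^+$ attains $\|u^+\|_\infty$ at a point of $\B$, and likewise $u^-$ attains its minimum on $\B$. I would argue by contradiction exactly as in the proof of \cite[Proposition~2.5]{AST2}: if $\|u^+\|_\infty$ were attained at a point $y$ on a half-line $h$, then, using that $u^+\in H^1(\G)$ tends to $0$ along $h$ and that $u^+$ vanishes somewhere on $\G$ (as $u^-\not\equiv 0$), one obtains $\#(u^+)^{-1}(t)\ge 2$ for a.e.\ $t\in(0,\|u^+\|_\infty)$; by the rearrangement inequality \cite[Proposition~2.7]{DDGS} this gives $J(u^+)\ge s_\lambda$, hence $J(u^+)=s_\lambda$ by the previous paragraph, and the equality case (as in Theorem~\ref{notatt}) forces $u^+$ to be everywhere positive on a graph isometric to $\R$ or a tower of bubbles, contradicting $u^+\equiv0$ on $\{u<0\}\neq\emptyset$. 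Applying the same reasoning to the nodal ground state $-u$ yields the claim for $u^-$; in particular there is $\bar q\in\B$ with $u(\bar q)<0$, so $u^+(\bar q)=0$.

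It then remains to reproduce the final estimate of Theorem~\ref{thm:noex_GS}. Let $\bar x\in\B$ be such that $u^+(\bar x)=\|u^+\|_\infty$. Since $\B$ is connected (half-lines being pendant edges), there is a path in $\B$ joining $\bar x$ to $\bar q$ of length at most $\diam(\B)$, so by the Cauchy--Schwarz inequality, \eqref{formJ}, the bound $J(u^+)\le s_\lambda$ and $u^+\in\NN_Z$,
\[
\|u^+\|_\infty=|u^+(\bar x)-u^+(\bar q)|\le\sqrt{\diam(\B)}\,\|(u^+)'\|_{2}\le\sqrt{\diam(\B)}\,\sqrt{\frac{s_\lambda}{\kappa}}.
\]
Since also $\lambda\|u^+\|_2^2\le\|(u^+)'\|_2^2+\lambda\|u^+\|_2^2=\|u^+\|_p^p=J(u^+)/\kappa\le s_\lambda/\kappa$, Proposition~\ref{boundedness} applied to $u^+\in\NN_Z$ gives a constant $C>0$ depending only on $\lambda$ and $p$ with
\[
C\le\|u^+\|_p^p\le\|u^+\|_\infty^{p-2}\,\|u^+\|_2^2\le\frac1\lambda\,\diam(\B)^{\frac p2-1}\Bigl(\frac{s_\lambda}{\kappa}\Bigr)^{\frac p2},
\]
and solving for $\diam(\B)$ produces the desired lower bound depending only on $\lambda$ and $p$.

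The main obstacle lies in the core step: verifying $\#(u^+)^{-1}(t)\ge2$, and hence $J(u^+)>s_\lambda$, in the degenerate case where all of $\{u<0\}$ is contained in a single half-line, so that no zero of $u^+$ is available off that half-line to build the second preimage of $t$. This is settled by running the identical argument for $u^-$, which is then supported on a sub-half-line: the equality case $J(u^-)=s_\lambda$ would force $u^-$ to be strictly negative on all of $\G$, absurd since $u^-\equiv0$ on $\B$ (and $\B\neq\emptyset$, as otherwise $\G$ is a star of half-lines, which carries no nodal ground state by Theorem~\ref{thm:nonexhalf}(ii) and the hypothesis is vacuous). Aside from this, the proof is a transcription of that of Theorem~\ref{thm:noex_GS} with $u$ replaced by $u^+$.
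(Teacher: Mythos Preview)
Your proof is correct and follows essentially the same strategy as the paper's. The paper's argument is slightly more direct: rather than first establishing the a~priori bound $J(u^\pm)\le s_\lambda$ and then invoking the equality case of the rearrangement inequality, the paper simply observes that if $u^+$ attained its maximum on a half-line then the two-preimage argument would give $J(u^+)>s_\lambda$ strictly (since $u^+$ vanishes on the nonempty set $\{u<0\}$ and hence cannot be a soliton), whence
\[
J(u)=J(u^+)+J(u^-)>s_\lambda+\inf_{v\in\NN_Z}J(v),
\]
contradicting \eqref{MZlevel0} directly. This bypasses your detour through $J(u^+)=s_\lambda$ and the equality case of Theorem~\ref{notatt} (which, incidentally, is about graphs satisfying \eqref{H} rather than the equality case in rearrangement per~se).

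Your concern about the ``degenerate case'' where $\{u<0\}$ lies entirely inside one half-line is in fact moot: since $u$ is a nodal ground state it solves \eqref{eq:NLS} on each half-line and is $L^2$ there, so by the phase-plane analysis its restriction to any half-line is either identically zero or a portion of a signed soliton, hence of constant sign. Thus $\{u<0\}$, if it meets a half-line, must spill over into the adjacent edges through the attachment vertex; in particular a zero of $u^+$ off the half-line carrying the maximum is always available, and the two-preimage count goes through without your workaround via $u^-$. The paper does not spell this out either, but the point is implicit in the phrase ``as in Theorem~\ref{thm:noex_GS}''.
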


\begin{proof} Let $u$ be a nodal ground state.
Observe that if $u^+$ attains its $L^\infty$-norm on a half-line, as in Theorem \ref{thm:noex_GS}, we prove that $J_{\lambda}(u^+)> s_{\lambda}$. Hence  $J_{\lambda}(u)=J_{\lambda}(u^+)+J_{\lambda}(u^-)> s_{\lambda}+  \inf_{v \in \NN_{Z}(\G)} J_\lambda(v)$
which contradicts \eqref{MZlevel0}. The same is valid for $u^-$.
Hence  both $u^+$ and $u^-$ attain their $L^\infty$-norm on $\B$ only. Thus $u$ changes sign in $\B$ and as such, has a zero in $\B$. We then conclude as in  Theorem \ref{thm:noex_GS} working on $u^+$.
\end{proof}

In view of Theorems \ref{thm:nonexhalf}--\ref{thm:noex_GS}--\ref{thm:noex_NGS}, it is clear that a suitable combination of topological and metrical features is needed to guarantee existence of ground states with $Z\neq\emptyset$ and nodal ground states. Towards this direction, we conclude the discussion of this section with two general procedures to construct graphs where ground states and nodal ground states do exist. The first one is genuinely of metrical nature, in that it is completely independent of the topology of the graph. The second one mixes topological and metrical properties.

In the next statement, by {\em pendant} we mean a finite length terminal edge  whose vertex of degree~$1$ is not in $Z$.

\begin{theorem}
\label{baffolungo}
There exists a constant $C>0$ depending only on $\lambda>0$ and $p$ such that, for every noncompact graph  $\G\in{\bf G}$  with a finite number of edges, 
\begin{itemize}
\item[1)] if $\G$ has a pendant of length $a \ge C$, then $\inf_{v\in\NN_Z} J(v)$ is achieved;
\item[2)]  if $\G$ has two pendants of lengths $a_1, a_2 \ge C$, then $\inf_{v\in\MM_Z} J(v)$ is achieved.
\end{itemize}
\end{theorem}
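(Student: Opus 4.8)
The plan is to show that a sufficiently long pendant lets one build a competitor whose action is strictly below the level at infinity, so Theorem~\ref{groundandnodal} and Proposition~\ref{prop:level_infinity_finitely_many_edges} apply. Since $\G$ has a finite number of edges, we know $J_\lambda^\infty(\G;Z)=s_\lambda$, and the ground-state level is always $\le s_\lambda$ by Proposition~\ref{2preim}; thus for part~1) it suffices to produce \emph{one} function $u\in\NN_Z(\G)$ with $J(u)<s_\lambda$. First I would work on the pendant alone: a pendant of length $a$, with a free ($\deg 1$, not in $Z$) endpoint and the other endpoint glued to the rest of $\G$, behaves like a copy of the interval $[0,a]$. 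On such an interval, consider the variational problem of minimizing $J_\lambda$ on the Nehari manifold among functions that vanish at the glued endpoint $x=a$ (so that the extension by $0$ to all of $\G$ stays continuous and lies in $H^1_Z(\G)$) and are free at $x=0$. Call this level $m(a)$. As $a\to\infty$ this interval problem converges to the half-line problem with $Z=\emptyset$, whose level is $\tfrac12 s_\lambda$ by \eqref{half}; more precisely, truncating the half-soliton $\psi_\lambda$ and projecting onto the Nehari manifold gives $m(a)\to\tfrac12 s_\lambda$ as $a\to\infty$, hence $m(a)<s_\lambda$ for all $a\ge C$ with $C=C(\lambda,p)$ universal. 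Taking the minimizer on $[0,a]$ (which exists by compactness) and extending it by $0$ to $\G$ yields $u\in\NN_Z(\G)$ with $J(u)=m(a)<s_\lambda=J_\lambda^\infty(\G;Z)$, so \eqref{levelN} holds and a ground state exists by Theorem~\ref{groundandnodal}(i).

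For part~2), the target inequality is \eqref{levelM}: using \eqref{NZlevel0} and the bound $\inf_{\NN_Z}J\ge\tfrac12 s_\lambda$ from Proposition~\ref{2preim}, it is enough to exhibit $w\in\MM_Z(\G)$ with $J(w)<J_\lambda^\infty(\G;Z)+\inf_{v\in\NN_Z}J(v)=s_\lambda+\inf_{v\in\NN_Z}J(v)$. The natural candidate uses the two pendants: on the first pendant (length $a_1\ge C$) place a nonnegative interval minimizer $u_1$ of the type above, vanishing at the glued endpoint, with $J(u_1)=m(a_1)<s_\lambda$; on the second pendant (length $a_2\ge C$) place $-u_2$ where $u_2$ is the analogous nonnegative interval minimizer with $J(u_2)=m(a_2)<s_\lambda$; and set $w$ equal to $u_1$ on the first pendant, $-u_2$ on the second, and $0$ elsewhere. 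Since both pendants are terminal edges with free endpoints, the supports are disjoint and $w$ is continuous, $w\in H^1_Z(\G)$, with $w^\pm$ each a (rescaled, already on) Nehari function, so $w\in\MM_Z(\G)$ and $J(w)=m(a_1)+m(a_2)<2s_\lambda\le s_\lambda+\inf_{v\in\NN_Z}J(v)$ — wait, this last step needs care, because $\inf_{\NN_Z}J$ could be as small as $\tfrac12 s_\lambda$. So instead I would argue directly: $J(w)=m(a_1)+m(a_2)$, and choosing $C$ large enough that $m(a)<\tfrac12 s_\lambda+\epsilon_0$ for a fixed small $\epsilon_0$, we get $J(w)<s_\lambda+2\epsilon_0$. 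On the other hand one of $u_1$, placed on a single pendant, already shows $\inf_{\NN_Z}J\le m(a_1)$; but we need a \emph{lower} bound on $\inf_{\NN_Z}J$ of the form $s_\lambda-\epsilon_0$ to close the gap, which is false in general. The correct fix is to keep $u_1$ but realize the $+$ part as a \emph{near-minimizer} of $J$ over $\NN_Z(\G)$ with support far out on one half-line of $\G$ (which exists by density of compactly supported functions), and use the long pendant only for the $-$ part: then $J(w)\le \big(\inf_{\NN_Z}J+\epsilon\big)+m(a_2)<\inf_{\NN_Z}J+s_\lambda$ once $m(a_2)<s_\lambda-\epsilon$, i.e.\ once $a_2\ge C$, since $m(a_2)\to\tfrac12 s_\lambda<s_\lambda$. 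This gives \eqref{levelM} and hence a nodal ground state by Theorem~\ref{groundandnodal}(ii).

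The main technical point — and the only real obstacle — is the asymptotic estimate $m(a)\to\tfrac12 s_\lambda$ as $a\to\infty$ with a \emph{universal} rate, so that the threshold $C$ depends only on $\lambda$ and $p$ and not on $\G$. For the upper bound I would use the explicit half-soliton: $\psi_\lambda$ decays exponentially (with rate $\sqrt\lambda$), so a cut-off $\chi_a\psi_\lambda$ supported in $[0,a]$ and vanishing at $a$ satisfies $J(\pi_\lambda(\chi_a\psi_\lambda))\le\tfrac12 s_\lambda+Ce^{-c\sqrt\lambda\,a}$, with constants independent of the graph; hence $m(a)\le\tfrac12 s_\lambda+o(1)$. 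For the lower bound one either notes $m(a)\ge$ (the half-line Dirichlet-at-one-end problem level) which is $\ge\tfrac12 s_\lambda$ by a reflection/rearrangement argument, or simply observes that we only ever need the \emph{upper} bound $m(a)<s_\lambda$ (resp.\ $m(a)<s_\lambda-\epsilon$), which already follows from the cut-off estimate for $a$ large. The existence of the interval minimizer defining $m(a)$ is routine compactness on a compact interval (as in the compact-graph case quoted after Definition~\ref{ngs}), and the verification that the glued function lies in the right Nehari set / nodal Nehari set and that the actions add over disjoint supports is immediate from \eqref{formJ}. One should also check the harmless point that the free endpoint of the pendant carries the natural (Neumann/Kirchhoff of degree one) condition automatically for the interval minimizer, which is exactly why we do not impose anything there — this matches the boundary conditions in \eqref{NLSdk} since the pendant's free vertex is not in $Z$.
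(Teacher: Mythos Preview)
Your argument for part 1) is correct and is essentially the paper's: a truncated half-soliton on the pendant, extended by zero, gives a competitor in $\NN_Z$ with action strictly below $s_\lambda$, and the universality of $C$ follows from the explicit exponential decay of $\psi_\lambda$.

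For part 2), however, you abandon the right argument and replace it with one that does not work. Your original two-pendant construction is exactly what the paper does, and it \emph{does} close. The lower bound you need on $\inf_{\NN_Z}J$ is not ``of the form $s_\lambda-\epsilon_0$'' but simply the universal bound $\inf_{\NN_Z}J\ge\tfrac12 s_\lambda$ from Proposition~\ref{2preim} --- which you yourself quoted two lines earlier. Concretely, choose $C$ so that $m(a)<\tfrac34 s_\lambda$ for all $a\ge C$ (possible since $m(a)\to\tfrac12 s_\lambda$); then
\[
J(w)=m(a_1)+m(a_2)<\tfrac32\, s_\lambda \le s_\lambda+\tfrac12\, s_\lambda \le s_\lambda+\inf_{v\in\NN_Z}J(v)=J^\infty(\G;Z)+\inf_{v\in\NN_Z}J(v),
\]
and \eqref{levelM} follows. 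This is precisely the paper's computation.

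Your proposed ``fix'', by contrast, contains a genuine error: you claim there is a near-minimizer of $J$ over $\NN_Z$ with support far out on one half-line. This is false whenever $\inf_{\NN_Z}J<s_\lambda$. Any $u\in\NN_Z$ supported on a single half-line and vanishing at its attaching vertex can be viewed, after extension by zero, as an element of $\NN_\lambda(\R)$, hence $J(u)\ge s_\lambda$; such a function is therefore \emph{not} a near-minimizer. Density of compactly supported functions does give compactly supported near-minimizers, but their supports will typically sit on the pendants (that is where the low level comes from), so you cannot assume disjointness from the test function on the second pendant. Drop the fix and return to the two-pendant competitor with the $\tfrac34 s_\lambda$ threshold.
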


\begin{remark}
Observe again that the assumption that $\G$ has a finite number of edges cannot be removed. This can be easily seen as follows. For point $1)$ it is enough to consider the graph $\G$ in Remark \ref{Remark 4.5}, for which $\displaystyle \inf_{v\in\NN_{\lambda}(\G)}J(v)= \frac{s_\lambda}{2}$. Therefore, if $u$ were a (positive) ground state,  by  \cite[Proposition 2.2]{DDGS} every $t\in (\inf u, \max u)$ would be attained only once on $\G$, which is incompatible with the presence of vertices of degree $3$. 
For point 2) one can simply consider any periodic graph, since such graphs admit no nodal ground state by Theorem \ref{thm:per}.
\end{remark}

\begin{remark}
Observe that in Theorem \ref{baffolungo} the constant $C$ is the same for ground states and for nodal ground states and depends on the presence of the pendants but not on the rest of the graph. This  will be of great help in Section \ref{sect qual}.
\end{remark}

\begin{proof}[Proof of Theorem \ref{baffolungo}]
Let $\psi_\lambda \in H^1(\R^+)$ be the positive half-soliton satisfying, by \eqref{half}, $J(\psi_\lambda) = s_\lambda /2$.
By density,  there exists a function $u_1 \in \NN(\R^+)$ supported in some interval $[0,C]$ such that 
\[
J(u_1) <\frac34 s_\lambda 
\]
(for example, one may take $(\psi_\lambda-\delta)^+$ with $\delta$ small and then project it on $\NN_\lambda(\R^+)$).

 1) Let $\G_a$ be a graph  with at least one pendant of length $a$. If $a$ is larger than $C$,
we may consider $[0,C]$ as contained in the pendant, identifying $x=0$ with its vertex of degree~$1$.
Extending  $u_1$ by $0$ on the remaining part of $\G_a$, we obtain a function $\widetilde u_1\in \NN_Z(\G_a)$ such that
\[
 J(\widetilde u_1) = J(u_1)  < \frac34 s_\lambda.
\]
The  existence of a ground state  follows from  Proposition \ref{prop:level_infinity_finitely_many_edges} and Theorem \ref{groundandnodal}.

2) We denote by $\G_{a_1,a_2}$ a graph with two pendants $e_1$, $e_2$ of lengths $a_1$, $a_2$ and we show that if $a_1\geq C$ and  $a_2\geq C$,  then
\begin{equation}
\label{eqll}
\inf_{v\in\MM_Z(\G_{a_1,a_2})} J(v) < J^\infty(\G_{a_1,a_2};Z) +  \inf_{v\in\NN_Z(\G_{a_1,a_2})} J(v)
\end{equation}
which, via Theorem \ref{groundandnodal}, establishes the existence of a nodal ground state. Notice that, by Propositions \ref{2preim}--\ref{prop:level_infinity_finitely_many_edges},
for every $a_1$, $a_2$,
\[
J^\infty(\G_{a_1,a_2};Z) =s_\lambda, \qquad  \inf_{v\in\NN_Z(\G_{a_1,a_2})} J(v) \ge \frac12 s_\lambda,
\]
so that 
\[
J^\infty(\G_{a_1,a_2}; Z) +  \inf_{v\in\NN_Z(\G_{a_1,a_2})} J(v) \ge  \frac32 s_\lambda.
\]
Now, using the same $u_1$ as in 1), we define
\[
\widetilde u_1(x) = \begin{cases} u_1(x) & \text{ if } x \in  [0,a_1] \subset e_1, \\
-u_1(x)  &\text{ if } x \in [0,a_2] \subset e_2, \\
0  &\text{ elsewhere on  } \G_{a_1,a_2}. \end{cases}
\]
Clearly $\widetilde u_1 \in \MM_Z( \G_{a_1,a_2})$ and 
\[
J(\widetilde u_1) = 2J(u_1)   < \frac32 s_\lambda.
\]
This proves \eqref{eqll} and concludes the proof.
\end{proof}

We now discuss the second procedure to find nodal ground states.
The idea is to take two graphs admitting ground states and connect them by the addition of a faraway edge. 
So, let $\G^1$, $\G^2$ be any two noncompact graphs with a finite number of edges for which
$\inf_{v\in\NN_{Z^i}(\G^i)}J(v)<s_\lambda$. Given $L>0$, we glue
together $\G^1$ and $\G^2$ by taking a new edge of length $1$,
attaching its first endpoint to the point $x=L$ on a half-line $h^1$
of $\G^1$ and its second endpoint to the point $x=L$ on a half-line
$h^2$  of $\G^2$. We call $\G_L$ the resulting graph (see Figure~\ref{fig:nodex}) and we let the
set $Z_L$ of vertices of degree 1 in $\G_L$ with homogeneous Dirichlet
conditions be given by the union of the corresponding sets of vertices in $\G^1$ and~$\G^2$.  

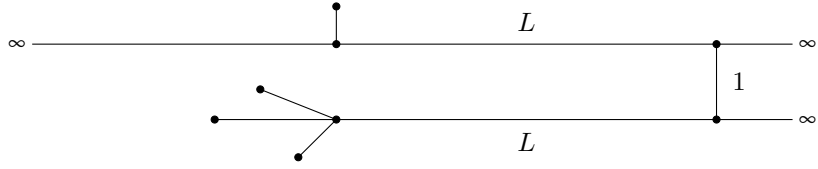
\begin{figure}
	\centering
	\begin{tikzpicture}
		\node at (-4,0) [nodo] {};
		\draw (-4,0)--(-4,-.5);
		\node at (-4,-.5) [nodo] {};
		\node at (-8.2,-.5) [infinito] {$\scriptstyle\infty$};
		\node at  (2.2,-.5) [infinito] {$\scriptstyle\infty$};
		\draw (-8,-.5)--(-4,-.5);
		\draw (-4,-.5)--(2,-.5);
		
		\node at (-4, -1.5) [nodo] {};
		\node at (-4.5, -2) [nodo] {};
		\node at (-5, -1.1) [nodo] {};
		\node at (-5.6, -1.5) [nodo] {};
		\node at (2.2, -1.5) [infinito] {$\scriptstyle\infty$};
		
		\draw (-5.6, -1.5) -- (2, -1.5);
		\draw (-4, -1.5) -- (-4.5, -2);
		\draw (-4, -1.5) -- (-5, -1.1);
		
		\draw (1,-.5)--(1,-1.5);
		\node at (1,-.5) [nodo] {};
		\node at (1,-1.5) [nodo] {};

		\node at (-1.5,-.2) [] {$L$};
		\node at (-1.5,-1.8) [] {$L$};
		\node at (1.3,-1) [] {$1$};
	\end{tikzpicture}

	\caption{Example of a graph $\G_L$ as in Theorem \ref{thm:ex_NGS}, constructed starting with two graphs in Figure \ref{exist}. If the vertical edge on the right is sufficiently far from the pendants of the graph, nodal ground states exist.}
	\label{fig:nodex}
\end{figure}

\begin{lemma}
\label{Gl}
Let $\lambda>0$ and  $\G^1$, $\G^2$, $\G_L$ be the graphs described above. Then 
\[
  \lim_{L\to\infty} \inf_{v\in\NN_{Z_{L}}(\G_L)}J(v)
  = \min\left(\inf_{v\in\NN_{Z^1}(\G^1)}J(v),\,
  \inf_{v\in\NN_{Z^2}(\G^2)}J(v)\right).
\]
\end{lemma}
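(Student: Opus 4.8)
The plan is to prove the two inequalities ``$\le$'' and ``$\ge$'' separately. For the upper bound, I would observe that the graph $\G_L$ contains, as metric subspaces, isometric copies of both $\G^1$ and $\G^2$: indeed $\G^i$ sits inside $\G_L$ except that one of its half-lines $h^i$ now carries, at its point $x=L$, an extra vertex of degree $3$ from which the new connecting edge departs. Given $\eps>0$, pick $u\in\NN_{Z^1}(\G^1)$ with compact support (by density) such that $J(u)\le \inf_{v\in\NN_{Z^1}(\G^1)}J(v)+\eps$. For $L$ large enough the support of $u$ does not reach the point $x=L$ on $h^1$, so $u$ extends by $0$ to an element of $\NN_{Z_L}(\G_L)$ with the same value of $J$. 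Hence $\inf_{v\in\NN_{Z_L}(\G_L)}J(v)\le \inf_{v\in\NN_{Z^1}(\G^1)}J(v)+\eps$ for all large $L$, and symmetrically for $\G^2$; letting $\eps\to 0$ gives $\limsup_{L\to\infty}\inf_{v\in\NN_{Z_L}(\G_L)}J(v)\le \min_i \inf_{v\in\NN_{Z^i}(\G^i)}J(v)$.

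For the lower bound, I would argue by contradiction via a concentration–compactness/splitting argument. Suppose there is a sequence $L_k\to\infty$ and $u_k\in\NN_{Z_{L_k}}(\G_{L_k})$ (which we may take nonnegative, replacing $u_k$ by $|u_k|$) with $J(u_k)\to \ell < \min_i\inf_{v\in\NN_{Z^i}(\G^i)}J(v)$. By Proposition \ref{boundedness} the sequence $(u_k)_k$ is bounded in $H^1$, and $J(u_k)=\kappa\|u_k\|_p^p$ stays bounded away from $0$. The crucial geometric point is that as $L_k\to\infty$, the connecting edge (of fixed length $1$) sits at distance $L_k\to\infty$ from the ``cores'' $\B^i$ of the two pieces; since $\|u_k\|_{H^1}$ is bounded, the portion of $u_k$ living near the junction — more precisely on the part of $h^1$ beyond $x=L_k/2$, on the connecting edge, and on the part of $h^2$ beyond $x=L_k/2$ — has $H^1$-norm, hence $L^\infty$-norm, tending to $0$ (a half-line of length $\to\infty$ carrying bounded $H^1$-mass must have small tail). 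One then splits $u_k$ with a cut-off at, say, $x=L_k/2$ on each of $h^1$ and $h^2$, writing essentially $u_k = v_k^1 + v_k^2 + w_k$ where $v_k^i$ is supported in the $\G^i$-side and $w_k$ is supported near the junction with $\|w_k\|_\infty\to 0$. Using the cut-off bounds from Remark \ref{approx}-type estimates, $\|(v_k^i)'\|_2^2+\lambda\|v_k^i\|_2^2 = \|u_k'\|_2^2+\lambda\|u_k\|_2^2$ restricted to the $i$-th side up to a vanishing error, and $\|v_k^1\|_p^p+\|v_k^2\|_p^p = \|u_k\|_p^p + o(1)$. Since $u_k\in\NN_{Z_{L_k}}(\G_{L_k})$, for at least one index $i$ the piece $v_k^i$ satisfies $\|(v_k^i)'\|_2^2+\lambda\|v_k^i\|_2^2\le \|v_k^i\|_p^p + o(1)$, i.e. $n_\lambda(v_k^i)\le 1+o(1)$, and $v_k^i$ can be regarded as a function in $H^1_{Z^i}(\G^i)$; projecting, $J(\pr(v_k^i))\ge \inf_{v\in\NN_{Z^i}(\G^i)}J(v)$, while (exactly as in \eqref{nlambda}) $J(\pr(v_k^i)) = \kappa\, n_\lambda(v_k^i)^p\|v_k^i\|_p^p \le \kappa(1+o(1))\|u_k\|_p^p$ for that $i$ — and more carefully one keeps track that the two pieces together account for all of $\|u_k\|_p^p$, so that $J(u_k)\ge \min_i\inf_{v\in\NN_{Z^i}(\G^i)}J(v) + o(1)$, contradicting $J(u_k)\to\ell$.

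The main obstacle I anticipate is making the splitting rigorous and bookkeeping the errors: one must ensure that after truncating at $x=L_k/2$ on each half-line and discarding the small-$L^\infty$ remainder $w_k$, the two resulting functions $v_k^i$ are (i) nonzero for $k$ large (this uses Proposition \ref{boundedness}, since if, say, $v_k^1\equiv 0$ then $u_k$ would essentially live on the $\G^2$-side and one argues with $i=2$ only), and (ii) collectively carry almost all of the Nehari ``budget'', so that one genuinely gets a lower bound by the $\min$ rather than by something smaller. The fact that the connecting edge has a fixed finite length and that $\G^1,\G^2$ have finitely many edges (so their bounded parts are compact and the only noncompact directions are the half-lines) is what guarantees that mass escaping to infinity in $\G_{L_k}$ must do so along the half-lines, where the level-at-infinity computation of Proposition \ref{prop:level_infinity_finitely_many_edges} (giving $s_\lambda\ge \inf_{v\in\NN_{Z^i}(\G^i)}J(v)$ by the standing hypothesis $\inf_{v\in\NN_{Z^i}(\G^i)}J(v)<s_\lambda$) closes the argument even in that degenerate scenario.
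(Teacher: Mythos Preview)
Your upper bound argument is correct and matches the paper exactly.

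The lower bound contains a genuine gap. Your central claim---that the portion of $u_k$ living on $h^1\cap[L_k/2,\infty)$, the connecting edge, and $h^2\cap[L_k/2,\infty)$ has $L^\infty$-norm (or even $H^1$-norm) tending to $0$---is false in general. Bounded $H^1$-mass on an interval of growing length only guarantees that the function is small \emph{somewhere} on that interval (since $\|u_k\|_2$ is bounded), not that it is small everywhere there. Nothing prevents $u_k$ from concentrating near the connecting edge: a soliton-like bump sitting on the edge of length $1$ and the adjacent segments of $h^1,h^2$ would have $\|w_k\|_\infty$ of order $1$, and in that scenario your pieces $v_k^1, v_k^2$ both carry negligible $L^p$-mass, so the fact that one of them has $n_\lambda(v_k^i)\le 1+o(1)$ yields nothing useful.

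The paper repairs this by cutting at minimum points rather than at $L/2$. Let $x_L\in h^1\cap[0,L]$ and $y_L\in h^2\cap[0,L]$ be points where $u_L$ attains its minimum on those segments, and set $\delta_L := \max(u_L(x_L),u_L(y_L))$; then $\delta_L\to 0$ from the $L^2$-bound. The function $(u_L-\delta_L)^+$ vanishes at $x_L,y_L$, so after projecting to $\NN_{Z_L}(\G_L)$ one splits $\G_L$ at these two points into \emph{three} pieces $\overline\G^1\subset\G^1$, $\overline\G^2\subset\G^2$, and a middle graph $\G^3$ containing the connecting edge and the half-line tails. Writing $\lambda$ as a convex combination of the Nehari parameters $\theta_1,\theta_2,\theta_3$ of the three restrictions forces some $\theta_i\ge\lambda$, hence $\kappa\|v_L^i\|_p^p\ge c_i$ for that $i$ by Remark~\ref{rem:Jincr}. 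The case $\theta_3\ge\lambda$ is precisely the concentration-in-the-middle scenario: it is handled by observing that $\G^3$ with Dirichlet conditions at $x_L,y_L$ satisfies \eqref{H0}, whence $\kappa\|v_L^3\|_p^p\ge s_\lambda\ge c_1$ by the standing hypothesis. Your closing remark about a ``degenerate scenario'' gestures in the right direction but misidentifies the issue: it is not mass escaping along the original half-lines of $\G^i$ to infinity, it is mass sitting on the bridge region, and this must enter as a genuine third alternative in the splitting rather than as an $o(1)$ error term.
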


\begin{proof} Without loss of generality, assume that 
\begin{equation}
\label{G1G2}
c_1 := \inf_{v\in\NN_{Z^1}(\G^1)}J(v)\leq \inf_{v\in\NN_{Z^2}(\G^2)}J(v) =: c_2.
\end{equation}
For every $\eps >0$ there exists a function $u_\eps \in \NN_{Z^1}(\G^1)$ with compact support such that $J(u_\eps) \le c_1+ \eps$. For every $L$ large enough, we can view $u_\eps$ as a function in $\NN_{Z_{L}}(\G_L)$, after extending it to zero on $\G_L$ outside its support. Therefore
\[
\limsup_{L\to\infty} \inf_{v\in\NN_{Z_{L}}(\G_{L})}J(v)\le \limsup_{L\to\infty} J(u_\eps) = J(u_\eps) \le c_1+ \eps
\]
and since $\eps$ is arbitrary we deduce that
\begin{equation}
\label{lims}
\limsup_{L\to\infty} \inf_{v\in\NN_{Z_{L}}(\G_{L})}J(v) \le c_1.
\end{equation}
We now prove a complementary inequality.
For every $L$, let $u_L \in \NN_{Z_L}(\G_L)$ satisfy
\begin{equation}
\label{reverse}
J(u_L) \le \inf_{v \in \NN_{Z_{L}}(\G_L)}J(v) +\frac1L
\end{equation}
and notice that by \eqref{formJ}, \eqref{lims} and \eqref{reverse}, $u_L$ is bounded independently of $L$. Let 
\[
u_L(x_L) = \min_{h^1\cap [0,L]} u_L (x),\quad u_L(y_L) = \min_{h^2\cap [0,L]} u_L (x)
\]
and set
\[
\delta_L = \max \left(u_L(x_L), u_L(y_L)\right).
\]
Since $u_L$ is uniformly bounded in $L^2(\G_L)$, $\delta_L  \to 0$ as $L \to \infty$. 

Consider the function $(u_L - \delta_L)^+ \in H^1_{Z_L} (\G_L)$, which does not vanish identically, for all $L$ large, by Proposition \ref{boundedness}. Exactly as in \eqref{nlambda},  $n_\lambda\left( (u_L - \delta_L)^+\right) \le 1 + o(1)$ as $L \to \infty$.

Set $v_L = \pi_\lambda\left((u_L-\delta_L)^+\right)$. Now $v_L \in \NN_{Z_L}(\G_L)$, it vanishes at $x_L$, $y_L$ and
\begin{equation}
\label{simil}
J(v_L) = \kappa n_\lambda\left( (u_L - \delta_L)^+\right)^p \|(u_L - \delta_L)^+\|_p^p \le \kappa(1+o(1))  \|u_L \|_p^p = J(u_L) + o(1)
\end{equation}
as $L \to \infty$.

We now cut $\G_L$ at $x_L$ and $y_L$, splitting it into three
parts $\overline \G^1 \subseteq \G^1$, $\overline \G^2 \subseteq \G^2$
and $\G^3 = \G_L \setminus(\overline \G^1 \cup \overline \G^2)$.
We call $\vv_i$ ($i = 1, 2$) the two vertices of $\G^3$ created on $h^i$ (see Figure \ref{split}).

\begin{figure}
	\centering
	\begin{tikzpicture}
		
		\node at (-10.2, 1) [infinito] {$\scriptstyle\infty$};
		\node at (-5, 1) [nodo] {};
		\node at (-5, 0) [nodo] {};
		\draw (-10,1)--(-5,1);
		\draw (-8,0)--(-5,0);
		
		\node at (-4, 1) [nodo] {};
		\node at (-4, 0) [nodo] {};
		\node at (1.2, 1) [infinito] {$\scriptstyle\infty$};
		\node at (1.2, 0) [infinito] {$\scriptstyle\infty$};
		\draw (-4,1)--(1,1);
		\draw (-4,0)--(1,0);
		
		\node at (-8, 1) [nodo] {};
		\node at (-8, 0) [nodo] {};
		\draw (-8,1)--(-8,1.5);
		\node at (-8, 1.5) [nodo] {};
		
		\node at (-2, 1) [nodo] {};
		\node at (-2, 0) [nodo] {};
		\draw (-2,1)--(-2,0);
		
		\draw (-8,0)--(-9,.4);
		\node at (-9, .4) [nodo] {};
		\draw (-8,0)--(-9.5,0);
		\node at (-9.5, 0) [nodo] {};
		\draw (-8,0)--(-8.5,-.5);
		\node at (-8.5, -.5) [nodo] {};
		
		\node at (-7, 1.3) [infinito] {$\scriptstyle\overline\G^1$};
		\node at (-7, .3) [infinito] {$\scriptstyle\overline\G^2$};
		\node at (-1, 1.3) [infinito] {$\scriptstyle\G^3$};
		\node at (-3.9, 1.3) [infinito] {$\scriptstyle\vv_1$};
		\node at (-3.9, 0.3) [infinito] {$\scriptstyle\vv_2$};
	\end{tikzpicture}

	\caption{The graph $\G_L$ of Figure~\ref{fig:nodex} splits into the three graphs $\overline \G^1$, $\overline \G^2$ and $\G^3$.}
	\label{split}
\end{figure}
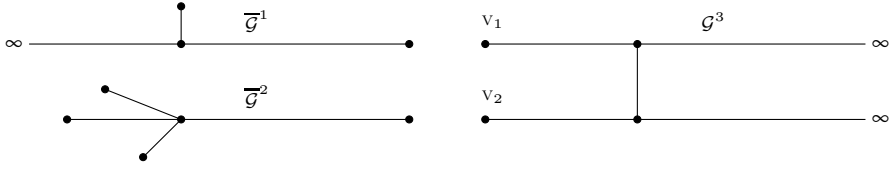

We can use $v_L$ to construct a function $v_L^1 \in H^1_{Z^1} (\G^1)$ by setting
\[
v_L^1(x) = \begin{cases} v_L(x) & \text{ if } x \in \overline \G^1, \\  0 & \text{ elsewhere on $\G^1$} \end{cases}
\]
and in the same way we contruct a function $v^2_L \in  H^1_{Z^2} (\G^2)$. Finally, we call $v^3_L$ the restriction of $v_L$ to $\G^3$. Setting $Z^3 =\{\vv_1, \vv_2\}$, by construction there results $v_L^3 \in H^1_{Z^3}(\G^3)$.

If $v_L^i \ne 0$, then there exists $\theta_i \in \R$ so that $v_L^i \in \NN_{\theta_i, Z^i}(\G^i)$.
Taking $\theta_i = 0$ if $v_L^i = 0$ and recalling that $v_L \in \NN_{Z_L}(\G_L)$, we obtain
\begin{equation}
\label{coco}
\lambda = \frac{\| v_L^1\|_2^2}{\| v_L\|_2^2}\theta_1 +  \frac{\| v_L^2\|_2^2}{\| v_L\|_2^2}\theta_2+  \frac{\| v_L^3\|_2^2}{\| v_L\|_2^2}\theta_3.
\end{equation}
Furthermore,
\[
J(v_L) = \kappa(\|v_L^1\|_p^p +\|v_L^2\|_p^p + \|v_L^3\|_p^p) \ge \kappa \,\max\left\{\|v_L^1\|_p^p, \|v_L^2\|_p^p, \|v_L^3\|_p^p\right\}.
\] 
Since, by \eqref{coco}, $\lambda$ is a convex combination of the $\theta_i$'s, at least one of the three will satisfy $\theta_i \ge \lambda$.

If $\theta_1 \ge \lambda$, by Remark \ref{rem:Jincr}, $\kappa \|v_L^1\|_p^p \ge c_1$.
If $\theta_2 \ge \lambda$, by \eqref{G1G2}, $\kappa \|v_L^2\|_p^p \ge c_2 \ge c_1$.
If $\theta_3 \ge \lambda$,  $\kappa \|v_L^3\|_p^p \ge\inf_{\NN_{\lambda, Z^3}(\G_3)}J \ge  s_\lambda \ge c_1$
since $\G^3$ satisfies \eqref{H0} and by the assumptions on $\G^1$. In each case we deduce, via \eqref{simil}, that 
\[
J(u_L) \ge J(v_L) + o(1) \ge  c_1 +o(1)
\]
as $L \to \infty$, so that by \eqref{reverse},
\[
  \liminf_L \inf_{v \in \NN_{Z_{L}}(\G_L)}J(v)
  \ge \liminf_L \bigl(J(u_L) -1/L\bigr)
  \ge c_1.
\]
In view of \eqref{lims}, this ends the proof.
\end{proof}

\begin{theorem}
\label{thm:ex_NGS}
Let $\lambda>0$ and  $\G^1$, $\G^2$ and $\G_L$ be the graphs considered above. 
If $L$ is large enough, then there exist nodal ground states on $\G_L$.
\end{theorem}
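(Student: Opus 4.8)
The plan is to apply part~(ii) of Theorem~\ref{groundandnodal}, so the whole matter reduces to checking that, for $L$ large, the graph $\G_L$ satisfies the strict inequality~\eqref{levelM}. First I would record that $\G_L$ is a noncompact graph in ${\bf G}$ with finitely many edges and at least one half-line, so $\omega_{Z_L}(\G_L)=0$, $\lambda>0$ is admissible, and Proposition~\ref{prop:level_infinity_finitely_many_edges} gives $J_\lambda^\infty(\G_L;Z_L)=s_\lambda$. Writing $c_i:=\inf_{v\in\NN_{Z^i}(\G^i)}J(v)$ and assuming $c_1\le c_2$ as in~\eqref{G1G2}, recall that $c_1,c_2<s_\lambda$ by hypothesis, and that $\inf_{v\in\NN_{Z_L}(\G_L)}J(v)\to c_1$ as $L\to\infty$ by Lemma~\ref{Gl}. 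Hence it is enough to produce, for $L$ large, a competitor $w\in\MM_{Z_L}(\G_L)$ with $J(w)<s_\lambda+\inf_{v\in\NN_{Z_L}(\G_L)}J(v)$.

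The competitor is a \emph{two-bump} function. I would fix $\eps>0$ with $c_2+3\eps<s_\lambda$ (possible since $c_2<s_\lambda$) and, by density, choose nonnegative $u_i\in\NN_{Z^i}(\G^i)$ with compact support and $J(u_i)\le c_i+\eps$. For $L$ large the supports of $u_1$ and $u_2$ are contained in the portions of $\G^1$ and $\G^2$ that survive unchanged in $\G_L$ and do not meet the connecting edge; extending $u_1$ and $-u_2$ by zero therefore yields functions in $H^1_{Z_L}(\G_L)$ with disjoint supports, whose norms (of the function, its derivative, and its $L^p$ norm) are unchanged. Set $w:=u_1$ on $\supp u_1$, $w:=-u_2$ on $\supp u_2$, and $w:=0$ elsewhere on $\G_L$; then $w^+$ and $w^-$ are precisely the zero extensions of $u_1$ and $-u_2$, so $w^\pm\in\NN_{Z_L}(\G_L)$ and $w\in\MM_{Z_L}(\G_L)$, with $J(w)=J(u_1)+J(u_2)\le c_1+c_2+2\eps$.

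To conclude I would take $L$ large enough that also $\inf_{v\in\NN_{Z_L}(\G_L)}J(v)\ge c_1-\eps$ (again by Lemma~\ref{Gl}), obtaining
\[
s_\lambda+\inf_{v\in\NN_{Z_L}(\G_L)}J(v)\ \ge\ s_\lambda+c_1-\eps\ >\ c_1+c_2+2\eps\ \ge\ J(w),
\]
where the strict inequality is exactly the choice $c_2+3\eps<s_\lambda$. This is~\eqref{levelM} for $\G_L$, so Theorem~\ref{groundandnodal}(ii) provides a nodal ground state.

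As for where the difficulty lies: essentially no hard step is left, since the two delicate facts---the limiting value of the ground state level under the faraway gluing, and the value of the level at infinity---are already available through Lemma~\ref{Gl} and Proposition~\ref{prop:level_infinity_finitely_many_edges}. What makes everything fit is the elementary inequality $c_1+c_2<s_\lambda+\min(c_1,c_2)$, i.e.\ $\max(c_1,c_2)<s_\lambda$, which is precisely the assumption that each of $\G^1$, $\G^2$ beats the real line: it guarantees that a positive bump sitting near the core of $\G^1$ together with a negative bump near the core of $\G^2$ lies strictly below the threshold $J_\lambda^\infty+\inf_{\NN}J$ in~\eqref{levelM}. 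The remaining points---that $\G_L\in{\bf G}$ has finitely many edges, and that the two bumps can be placed disjointly and away from the connecting edge once $L$ is large---are routine consequences of the compact support of the $u_i$ and of the construction of $\G_L$.
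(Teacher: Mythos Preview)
Your proposal is correct and follows essentially the same route as the paper: compute $J_\lambda^\infty(\G_L;Z_L)=s_\lambda$ via Proposition~\ref{prop:level_infinity_finitely_many_edges}, use Lemma~\ref{Gl} to bound $\inf_{\NN_{Z_L}(\G_L)}J$ from below by $c_1-\eps$, build a two-bump competitor $w=u_1-u_2$ from compactly supported near-minimizers on $\G^1$ and $\G^2$, and verify~\eqref{levelM} via the inequality $c_2+3\eps<s_\lambda$. Your choice of $\eps$ with a strict inequality $c_2+3\eps<s_\lambda$ is in fact slightly cleaner than the paper's $\eps=\tfrac13(s_\lambda-c_2)$, for which the final chain becomes an equality and one implicitly needs to shrink~$\eps$.
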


\begin{proof}
Without loss of generality, we assume that
\[
\min\Big(\inf_{v\in\NN_{Z^1}(\G^1)}J(v), \, \inf_{v\in\NN_{Z^2}(\G^2)}J(v)\Big)=\inf_{v\in\NN_{Z^1}(\G^1)}J(v).
\]

Let $\eps:=\frac13 \bigl(s_{\lambda}-\inf_{v\in\NN_{Z^2}(\G^2)}J(v) \bigr)$. By Lemma \ref{Gl},  we choose  $L$ so large  that
\begin{equation}
\label{eq 4.13}
 \inf_{v\in\NN_{Z_{L}}(\G_{L})}J(v)\geq \inf_{v\in\NN_{Z^1}(\G^1)}J(v)-\eps
\end{equation}
and that there exist  nonnegative $u^i\in\NN_{Z^i}(\G^i)$,
with compact support satisfying
\[
J(u^i)  \le \inf_{v\in\NN_{Z^i}(\G^i)}J(v)+\eps.
\] 
In particular, there is $M>0$ such that the restriction of $u^i$ to each half-line of $\G^i$ vanishes on $[M,+\infty)$.  Hence, for every $L\geq M$,
we define $w:\G_L\to\R$ as
	\[
	w(x):=\begin{cases}
	u^1(x) & \text{if }x\in\G^1,\\
	-u^2(x) & \text{if }x\in\G^2,\\
	0 & \text{elsewhere on }\G_L\,,
	\end{cases}
	\]
	where with a slight abuse of notation we still denote by $\G^1$, $\G^2$ the corresponding subgraphs of $\G_L$. Clearly, $w\in\MM_{Z_L}(\G_L)$ and,  by \eqref{eq 4.13} and the choice of $\eps$, we have
	\[
	\inf_{v\in\MM_{Z_L}(\G_L)}J(v)\leq J(w)
        = J(u^1)+J(u^2)
        \le \inf_{v\in\NN_{Z^1}(\G^1)}J(v)
        + \inf_{v\in\NN_{Z^2}(\G^2)}J(v)+2\eps
        < \inf_{v\in\NN_{Z_L}(\G_L)}J(v)+s_\lambda,
	\]
	in turn implying existence of nodal ground states by Theorem \ref{groundandnodal} and Proposition \ref{prop:level_infinity_finitely_many_edges}.
\end{proof}

\begin{remark}
Concrete examples of graphs fulfilling the hypotheses of Theorem \ref{thm:ex_NGS} can be produced starting for instance from any of the graphs in Figure \ref{exist} (see e.g.\ Figure~\ref{fig:nodex}). Note that, since $\inf_{v\in\NN_{Z}(\G)}J(v)<s_\lambda$ implies $\# F(\G)\geq1$, by construction we have $\# F(\G_L)\geq2$.
\end{remark}

\begin{figure}[h]
	\centering
	\begin{tikzpicture}
		\draw (0,0) circle (0.4);
		\node at (0,-.4) [nodo] {};
		\draw (0,-.4)--(0,-.9);
		\node at (0,-.9) [nodo] {};
		\draw (0,-1.4) circle (.5);
		\node at (0,-1.9) [nodo] {};
		\draw (0,-1.9)--(0,-2.5);
		\node at (0,-2.5) [nodo] {};
		\draw (3,-2.5)--(-3,-2.5);
		\node at (-3.2,-2.5) [infinito] {$\scriptstyle\infty$};
		\node at (3.2,-2.5) [infinito] {$\scriptstyle\infty$};
	\end{tikzpicture}
\caption{A graph with $\# F(\G)=2$ where nodal ground states never exist, independently of the length of the edges.}
\label{fig:F=2}
\end{figure}
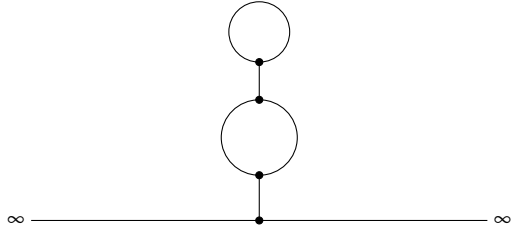

\begin{remark}
\label{Fge2}
Theorem \ref{thm:nonexhalf}~(ii) and Theorem~\ref{thm:noex_NGS} show that, if \eqref{H1} holds, or if the set of all bounded edges  of $\G$ is too small, nodal ground states never exist,
whereas Theorem \ref{thm:ex_NGS} proves that there exist graphs with $\# F(\G)\geq 2$ and a sufficiently large compact core where nodal ground states do exist. However, even though the former provide sufficient conditions for nonexistence, the latter are not sufficient conditions for existence. It is in fact not difficult to produce examples of graphs with $\# F(\G)=2$, and compact core of arbitrary size, where nodal ground states do not exist. For instance, consider the graph in Figure \ref{fig:F=2}. If $u$ is a nodal ground state on this graph, we know that either $u$ is identically equal to zero on the two half-lines or it has  constant sign on them. Assume thus that $u\geq0$ on the half--lines. Then, since $u^+$ is not identically zero, $u^+$ vanishes on the set $\mathcal{B}$ of the bounded edges of $\G$ at least at a point different from the vertex of the half-lines. We then have that $u^+$ has always at least two preimages for every $t\in (0,\max u^+)$ by Proposition~\ref{nodaldom} and hence $J(u^+)\geq s_{\lambda}$ by \cite[Proposition~2.8]{DDGS}.
As $u^-$ vanishes somewhere on the graph, we have also that $\displaystyle J(u^-) > \inf_{v\in\NN_{Z}(\G)}J(v)$. This implies that
\[
J(u)=J(u^-)+J(u^+)> \inf_{v\in\NN_{Z}(\G)}J(v)+s_{\lambda},
\]
which contradicts \eqref{MZlevel0}.
\end{remark}

\section{Graphs with infinitely many bounded edges}
\label{sec:infedge}
In this section we extend our discussion about ground states and nodal ground states to graphs with infinitely many edges whose length is uniformly bounded. In particular, we focus on two subclasses of such graphs that have already been considered in the literature: periodic graphs and regular trees.

\subsection{Periodic graphs}
\label{subsec:per}

Throughout this section, when we speak of a periodic metric graph we mean a graph fulfilling {\cite[Definition 4.1.1]{BK13}. We avoid  reporting the full details of the definition here. For our purposes, it is enough to recall that, if $\G$ is a periodic graph, then there exists a number $n\in\N$ and a compact subset $W\subset \G$, called a \emph{fundamental domain} of $\G$, such that
\begin{equation*}
\G=\bigcup_{k\in\Z^n}W_k\,,
\end{equation*}
where $W_k$ is a copy of $W$ for every $k\in\Z^n$, and $W_i\cap W_j$ contains at most finitely many points for every $i\neq j$. In this case, we say that $\G$ is a $\Z^n$-periodic graph.

Since we are concerned with problems involving homogeneous Dirichlet conditions on a subset $Z$ of the vertices of $\G$, we specify that when $\G$ is a $\Z^n$-periodic graph, we only consider here $\Z^n$-periodic subsets $Z$ (that is, $Z\cap W_k$ is a copy of $Z\cap W$ for every $k\in\Z^n$).

\begin{proof}[Proof of Theorem \ref{thm:per}]
	
	We address independently the case of ground states and nodal ground states.
	
	\smallskip
	
	{\em Part 1: existence of ground states.}
	Let $(u_n)_n\subset \NN_Z$ be such that $\lim_n J(u_n)=\inf_{v\in\NN_Z}J(v)$. Exploiting the periodicity of $\G$ and $Z$, we can assume with no loss of generality that $u_n$ attains its $L^\infty$ norm on $W_0$, for every $n$. Since $(u_n)_n$ is bounded in $H^1(\G)$, up to subsequences $u_n\rightharpoonup u$ in $H^1(\G)$ and $u_n\to u$ in $L_{\text{loc}}^\infty(\G)$ as $n\to \infty$. Furthermore, $u\not\equiv0$ on $\G$ because, if this were not the case, by the strong convergence of $(u_n)$ to $u$ in $L^\infty(W_0)$ we would have $\|u_n\|_{L^\infty(\G)}=\|u_n\|_{L^\infty(W_0)}\to0$ as $n\to \infty$, contradicting Proposition \ref{boundedness}.
	
	Note that, if $u_n\to u$ in $L^2(\G)$, then by standard Gagliardo--Nirenberg inequalities $u_n\to u$ in $L^p(\G)$ and, by weak lower semicontinuity, $n_\lambda(u)\leq 1$, so that 
	\[
	\inf_{v\in\NN_Z}J(v)\leq J(\pr(u))=\kappa n_\lambda(u)^p\|u\|_p^p\leq\lim_n \kappa \|u_n\|_p^p=\inf_{v\in\NN_{Z}}J(v)\,,
	\] 
	i.e.\ $\pr(u)$ is a ground state.
	
	Let us thus show that $u_n$ converges to $u$ strongly in $L^2(\G)$. Assume for contradiction that 
	\[
	\liminf_n\|u_n-u\|_2^2>0\,.
	\]
	Let $\theta\in\R$ and $(\lambda_n)_n\subset\R$ be such that $u\in\NN_{\theta,Z}$, $u_n-u\in\NN_{\lambda_n,Z}$ for every $n$. By the weak convergence of $(u_n)_n$ to $u$ in $H^1(\G)$, Brezis--Lieb lemma \cite{BL} and the fact that $u_n\in\NN_{\lambda,Z}$, we have
\begin{align*}
\lambda_n&= \frac{\|u_n-u\|_{p}^p- \|u_n'-u'\|_{2}^2}{\|u_n-u\|_{2}^2} = 
 \frac{\|u_n\|_{p}^p- \|u_n'\|_{2}^2- \|u\|_{p}^p+ \|u'\|_{2}^2+ o(1)}{\|u_n-u\|_{2}^2} \nonumber \\
&= \frac{\lambda \|u_n\|_{2}^2 -\theta \|u\|_{2}^2 + o(1)}{\|u_n-u\|_{2}^2} = \lambda + 
\frac{(\lambda-\theta)\|u\|_2^2 +o(1)}{\|u_n-u\|_{2}^2} =\lambda+\frac{\|u\|_2^2}{\|u_n-u\|_2^2}(\lambda-\theta)+o(1)
\end{align*}
as $n \to \infty$. In particular, either $\theta>\lambda$ or $\liminf_n \lambda_n>\lambda$ or $\theta=\lambda=\lim_n\lambda_n$. In any of these cases, applying again  Brezis--Lieb lemma, we obtain
\[
\inf_{v\in\NN_{\lambda,Z}}J_\lambda(v)=\lim_n\kappa\|u_n\|_p^p=\lim_n\kappa(\|u_n-u\|_p^p+\|u\|_p^p)\geq\liminf_n \inf_{v\in\NN_{\lambda_n,Z}}J_{\lambda_n}(v)+\inf_{v\in\NN_{\theta,Z}}J_\theta(v),
\]
which is impossible by Remark \ref{rem:Jincr} and $\lambda>0$.

\smallskip

{\em Part 2: nonexistence of nodal ground states.}
	By Theorem \ref{nonodal}, it is enough to show that $\inf_{v\in\MM_{Z}}J(v)\leq2\inf_{v\in\NN_{Z}}J(v)$. To this end, given $\varepsilon>0$, let $u\in\NN_{Z}$ be such that $J(u)\leq \inf_{v\in\NN_{Z}}J(v)+\varepsilon$. With no loss of generality, we can take such $u$ to be nonnegative, compactly supported on $\G$ and attaining its $L^\infty$ norm on $W_0$ (it is for instance enough to apply Remark \ref{approx} to a suitable ground state of $J$ in $\NN_{Z}$, that exists by  the first part of the proof). Hence, there exists $M>0$ such that $\text{supp}(u)\subset \bigcup_{|k|\leq M}W_k$. Let then $\bar{u}\in\NN_{Z}$ be a translation of $u$ on $\G$ such that $\text{supp}(\bar{u})\subset\bigcup_{|k|> M}W_k$ and define $w:\G\to\R$ as
	\[
	w(x):=\begin{cases}
	u(x) & \text{ if }x\in\text{supp}(u),\\
	-\bar{u}(x) & \text{ if }x\in\text{supp}(\bar{u}),\\
	0 & \text{ elsewhere on }\G\,.
	\end{cases}
	\]
	By construction, $w\in\MM_{Z}$ and $J(w)=J(u)+J(\bar{u})\leq2\inf_{v\in\NN_{Z}}J(v)+2\varepsilon$. Given the arbitrariness of $\varepsilon>0$, we conclude.
\end{proof}

\begin{remark}
	Observe that $J^\infty(\G;Z)=\inf_{v\in\NN_{Z}}J(v)$ for every periodic graph $\G$ and every set $Z$ with the same periodicity. This is the reason why it is not possible to rely directly on the abstract result of Theorem \ref{groundandnodal} to prove Theorem \ref{thm:per}.
\end{remark}

\subsection{Regular trees}
\label{subsec:tree}

Recall that a regular tree is an acyclic, noncompact metric graph with edges all of the same length and vertices all of the same degree (unrooted tree), with the possible exception of a single vertex of degree 1 (rooted tree).
Note that, if $\G$ is an unrooted tree, then necessarily $Z=\emptyset$ since every vertex has degree at least 3, whereas if $\G$ is a rooted tree either $Z=\emptyset$ or it coincides with the root of $\G$ (i.e.\ the unique vertex of degree 1).

We divide the proof of Theorem \ref{thm:tree} in two parts, proving first statements (i)--(ii) on ground states and then statement (iii) on nodal ground states.

\begin{proof}[Proof of Theorem \ref{thm:tree}]
	We split the proof in several steps.
	
	\smallskip
	{\em Step 1: ground states when $\G$ is an unrooted tree.} Let $(u_n)_n\subset\NN$ be such that $\lim_nJ(u_n)=\inf_{v\in\NN}J(v)$. Exploiting the symmetry of $\G$, it is not restrictive to assume that $u_n$ attains its $L^\infty$ norm in the same compact subset of $\G$, for every $n$. Hence, arguing as in the proof of Theorem \ref{thm:per} shows that the weak limit in $H^1(\G)$ of $(u_n)_n$ provides a desired ground state for $J$ in $\NN$.
	
	\smallskip
	{\em Step 2: ground states when $\G$ is a rooted tree and $Z=\emptyset$.} Let $r$ be the root of $\G$, $d\geq 3$ be the degree of each vertex of $\G$ different from the root, and  $\overline{\G}$ be the unrooted tree obtained by gluing together $d$ copies of $\G$ at their  roots.  We first prove that 
\begin{equation}
\label{jinf} 
J^\infty(\G) = \inf_{v\in\NN_{\{r\}}(\G)}J(v) =\inf_{v\in\NN(\overline{\G})}J(v).
\end{equation}
To this aim, given any function $u \in\NN_{\{r\}}(\G)$, we construct a sequence
 $(u_n)_n \subset \NN_{\{r\}}(\G)$ converging weakly to $0$ in $H^1(\G)$  by translating $u$ along $\G$ and extending it by 
$0$ on the remaining part of the graph. This proves that 
$J^\infty(\G) \le \inf_{v\in\NN_{\{r\}}(\G)}J(v)$.
Next, let $(u_n)_n \subset \NN(\G)$  be  a sequence converging weakly to $0$ in $H^1(\G)$ and such that $J(u_n) \to J^\infty(\G)$. Since $u_n(r)\to 0$ by  $L^{\infty}_{\text{loc}}(\G)$ convergence,  we can assume without loss of generality that each $u_n$ satisfies $u_n(r) = 0$, namely that $u_n \in \NN_{\{r\}}(\G)$. This shows that $J^\infty(\G) \ge \inf_{v\in\NN_{\{r\}}(\G)}J(v)$, and the first equality is proved.

To prove the second equality, notice that any $u \in \NN_{\{r\}}(\G)$ can be seen as an element of $\NN(\overline{\G})$, after extending it by $0$ on $\overline \G \setminus \G$. On the other hand, any $u \in \NN(\overline{\G})$ with compact support can be considered (when translated in such a way that $r \not\in \supp(u)$) as an element of  $\NN_{\{r\}}(\G)$. By density this is enough to conclude the proof of \eqref{jinf}.

\medbreak

By \eqref{jinf} and Theorem \ref{groundandnodal}, in order to prove the existence of a ground state, it is sufficient to show that
	\begin{equation}
	\label{T0<T}
	\inf_{v\in\NN(\G)}J(v)<\inf_{v\in\NN(\overline{\G})}J(v)\,.
	\end{equation}
	To this end, let $u\in\NN(\overline{\G})$ be a positive ground state of $J$ in $\NN(\overline{\G})$, whose existence is guaranteed by the previous step. Take a vertex $\vv$ of $\overline{\G}$ and split the graph at $\vv$ into $d$ disjoint rooted trees $\G_i$, $i=1,\dots,d$. For every $i$, let $u_i>0$ be the restriction of $u$ to $\G_i$ and $\lambda_i\in\R$ be such that $u_i\in\NN_{\lambda_i}(\G_i)$. Since $u\in\NN(\overline{\G})$ and $u>0$ on $\overline{\G}$, we have
	\[
	\lambda=\sum_{i=1}^d \frac{\|u_i\|_{L^2(\G_i)}^2}{\|u\|_{L^2(\overline{\G})}^2}\lambda_i\,,
	\]
	so that
	\[
	\lambda\leq (\max_{1\leq i\leq d}\lambda_i)\sum_{i=1}^d\frac{\|u_i\|_{L^2(\G_i)}^2}{\|u\|_{L^2(\overline{\G})}^2}=\max_{1\leq i\leq d}\lambda_i\,.
	\]
	Hence, there exists $j \in\left\{1,\dots,d\right\}$ such that $n_\lambda(u_j)\leq 1$. Since each $\G_i$ is a copy of $\G$, we then have
	\[
	\inf_{v\in\NN(\G)}J(v)\leq J(\pr(u_j))=\kappa n_\lambda(u_j)^p\|u_j\|_{L^p(\G_j)}^p<\kappa \sum_{i=1}^d\|u_i\|_{L^p(\G_i)}^p=\kappa\|u\|_{L^p(\overline{\G})}^p=\inf_{v\in\NN(\overline{\G})}J(v)\,,
	\]
	that is \eqref{T0<T}.
		\smallskip
	
{\em Step 3: ground states when $\G$ is a rooted tree and $Z\neq\emptyset$.} Since $Z$ coincides with the root of $\G$, as in the first part of Step 2 we have
	\[
	\inf_{v\in\NN_{Z}(\G)}J(v)=\inf_{v\in\NN(\overline{\G})}J(v),
	\]
	where $\overline{\G}$ is the unrooted tree corresponding to $\G$ as above. That  the problem on $\G$ has no ground state follows by the fact that, if $u$ were a ground state in $\NN_Z(\G)$, it would be also a ground state in $\NN(\overline{\G})$, as any function on $\G$ vanishing at the root can be regarded as a function on $\overline{\G}$ as well after extending it by $0$. Since this is impossible because ground states never vanish on $\overline{\G}$, we conclude.
\smallskip

{\em Step 4: nodal ground states when $\G$ is an unrooted tree or $\G$ is a rooted tree and $Z\neq\emptyset$.} If $\G$ is an unrooted tree, exploiting the  symmetry of $\G$, it is easy to adapt the argument developed in the proof of Theorem \ref{thm:per} to show again that
	\[
	\inf_{v\in\MM(\G)}J(v)=2\inf_{v\in\NN(\G)}J(v),
	\]   
and likewise, if $\G$ is a rooted tree with $Z\neq\emptyset$,  that  
	\[
	\inf_{v\in\MM_Z(\G)}J(v)=2\inf_{v\in\NN_Z(\G)}J(v).
	\]   
This implies that nodal ground states do not exist by Theorem \ref{nonodal}. 
	\smallskip
	
	{\em Step 5: nodal ground states when $\G$ is a rooted tree and $Z=\emptyset$.} Since, given any $u\in\MM$, at least one between $u^+$ and $u^-$ vanishes at the root $r$, it follows that
	\[
	\inf_{v\in\MM(\G)}J(v)=\inf_{v\in\NN(\G)}J(v)+\inf_{v\in\NN_{\{r\}}(\G)}J(v).
	\]
	Arguing as in the proof of  Theorem~\ref{nonodal}, this immediately implies that nodal ground states do not exist.
\end{proof}

\section{Qualitative properties of nodal ground states}
\label{sect qual}

The first result of this section concerns the \emph{nodal domains} (i.e.\ the connected components of $\G \setminus u^{-1} (0)$) of any minimizer $u$ in $\MM_{\lambda,Z}(\G)$. 

\begin{theorem}
	\label{nodaldom}
Let $\G \in \bf G$ and $\lambda>-\omega_Z(\G)$.	Let $u \in \MM_Z$ be a nodal ground state. Then $u$ has exactly two nodal domains. 
\end{theorem}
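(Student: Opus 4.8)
The claim is that a nodal ground state $u \in \MM_Z$ has exactly two nodal domains. Since $u$ changes sign, it has at least two nodal domains, so the content is the reverse inequality. The plan is to argue by contradiction: assume $u$ has at least three nodal domains and produce a competitor in $\MM_Z$ with strictly smaller action, contradicting minimality.

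First I would set up the decomposition. Suppose $\G \setminus u^{-1}(0)$ has connected components $\Omega_1, \Omega_2, \dots$ (at least three), and on each $\Omega_i$ the function $u$ has constant sign. Group them by sign: let $P$ be the union of the components where $u > 0$ and $N$ the union of those where $u < 0$, so $u^+$ is supported on $P$ and $u^-$ on $N$. Since $u \in \MM_Z$, we have $u^{\pm} \in \NN_Z$, hence $J(u) = J(u^+) + J(u^-) = \kappa\|u^+\|_p^p + \kappa\|u^-\|_p^p$ by \eqref{formJ}. The key quantitative fact I would use is that the restriction $u_i := u|_{\overline{\Omega_i}}$ (extended by $0$) lies in $H^1_Z(\G)$ — this is where one needs $u$ to be continuous and to vanish on $\partial\Omega_i \subseteq u^{-1}(0)$, together with the fact that $Z$-vertices already carry Dirichlet conditions — and that $u_i$ is a nonzero function whose projection $\pr(u_i)$ onto $\NN_Z$ satisfies $n_\lambda(u_i) = 1$ exactly when $u_i$ is itself a solution on $\Omega_i$. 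The crucial computation: because $u$ solves \eqref{NLSdk}, each $u_i$ satisfies $J'(u)u_i = 0$, which gives $\|u_i'\|_2^2 + \lambda\|u_i\|_2^2 = \|u_i\|_p^p$, i.e. $u_i \in \NN_Z$. Hence $J(u) = \sum_i J(u_i) = \sum_i \kappa\|u_i\|_p^p \ge (\text{number of domains}) \cdot \inf_{\NN_Z} J$.

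Now the contradiction. If there are at least three nodal domains, at least two of them have the same sign, say $\Omega_1, \Omega_2$ with $u > 0$ there; and at least one, say $\Omega_3$, has $u < 0$. Build $w := \pr((u_1 + u_2)) + \pr(u_3) \cdot(\text{sign})$ — more carefully: set $v^+ = u_1 + u_2$ (nonnegative, nonzero) and $v^- = u_3$ (nonpositive, nonzero), and let $w = \pr(v^+) + \pr(v^-)$. Then $w^{\pm} \in \NN_Z$, so $w \in \MM_Z$. I must check $J(w) < J(u)$. Since $v^+ = u_1 + u_2 \in \NN_Z$ already (being a sum of functions with disjoint supports each satisfying the Nehari identity, the identity is additive), we get $\pr(v^+) = v^+$ and $J(\pr(v^+)) = J(u_1) + J(u_2)$; similarly $\pr(v^-) = v^- = u_3$. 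So actually $J(w) = J(u_1) + J(u_2) + J(u_3) = J(u) - \sum_{i \ge 4} J(u_i) \le J(u)$, with equality only if there are exactly three domains. This shows at most three — I need to push to exactly two. For that, observe that $w$ is a nodal ground state with three nodal domains that is \emph{also} a critical point of $J$ by Proposition~\ref{mincrit} (in the compact case) or by the fact that $J(w) = \inf_{\MM_Z} J$ — but then $w$ solves \eqref{NLSdk}, and on the component $\Omega_3$ (where $w^- \equiv v^-$ and $\pr(v^-) = v^-$), $w^-$ is a ground state of $J$ on $\NN_Z$... and a ground state cannot vanish on $\G \setminus Z$ by \cite[Proposition~3.3]{AST1}, contradicting that $w^-$ vanishes on $\Omega_1 \cup \Omega_2$, which is nonempty and not contained in $Z$. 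Wait — I need $w^-$ to be a ground state, i.e. $J(w^-) = \inf_{\NN_Z} J$; this follows because if $J(w^-) > \inf_{\NN_Z} J$ one could replace $w^-$ by a genuine ground state supported far away (using noncompactness / the structure) to lower $J(w)$ below $\inf_{\MM_Z} J$ — but this last maneuver needs care and is exactly where the noncompact geometry enters.

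\textbf{Main obstacle.} The delicate point is the final step: upgrading "at most three nodal domains, with the third forced to be a ground state component" into an outright contradiction. In the compact case one invokes Proposition~\ref{mincrit} to get $J'(w) = 0$ and then uses that solutions don't vanish on open sets off $Z$ (a unique-continuation-type statement for ODEs on edges, valid here since on each edge $u$ solves a second-order ODE and cannot vanish identically on a subinterval without vanishing everywhere by uniqueness for the Cauchy problem). In the noncompact case one must instead argue directly that $J(u_1) + J(u_2) > \inf_{\NN_Z} J + (\text{something})$ cannot be improved — essentially that merging two same-sign domains into one strictly lowers the action below what a single Nehari element costs, because $u_1 + u_2$ as an element of $\NN_Z$ has action $J(u_1) + J(u_2) \ge 2\inf_{\NN_Z} J$, yet a nodal ground state needs $u^+$ to realize (close to) $\inf_{\NN_Z} J$ on its own. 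I would therefore structure the proof so that the contradiction is extracted from comparing $J(u^+) = J(u_1) + J(u_2) \ge 2\inf_{\NN_Z}J$ against the identity $J(u) = J(u^+) + J(u^-)$ and the lower bound \eqref{M2N}, concluding $J(u) \ge 3\inf_{\NN_Z} J$; then exhibiting (via the same cut-and-project construction as in Proposition~\ref{prop:compactness_lans} or the gluing arguments of Section~\ref{half-lines}) a competitor in $\MM_Z$ with action $< 3\inf_{\NN_Z} J$ — indeed any element of $\MM_Z$ has action close to $2\inf_{\NN_Z}J$ by \eqref{M2N} being the infimum value in the relevant cases, or more robustly, the nodal ground state $u$ itself has $J(u) = \inf_{\MM_Z}J$, and one shows $\inf_{\MM_Z} J < 3 \inf_{\NN_Z} J$ always (e.g. by the construction in the proof of Proposition~\ref{prop:compactness_lans}, gluing two disjointly-supported copies of near-optimizers of $\NN_Z$). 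That strict inequality $\inf_{\MM_Z} J < 3\inf_{\NN_Z} J$ is the clean statement to isolate and prove first; everything else is bookkeeping with \eqref{formJ} and the additivity of the Nehari identity over disjoint supports.
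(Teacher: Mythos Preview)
Your argument has a genuine gap in the case of exactly three nodal domains. Your competitor $w$, built from $v^+ = u_1 + u_2$ and $v^- = u_3$, is then just $u$ itself, so you get $J(w) = J(u)$ and no contradiction. Your attempts to close this gap do not work: Proposition~\ref{mincrit} is stated only for compact graphs, and the strict inequality $\inf_{\MM_Z} J < 3\inf_{\NN_Z} J$ is neither proved in the paper nor obviously true in the generality of the theorem (which covers all $\G\in\mathbf{G}$, compact or not). Even if that inequality could be established by a gluing construction on noncompact graphs, it would be an unnecessary detour.

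The paper's proof uses the same ingredients you already isolated but assembles them differently, and much more simply. You correctly observe that each restriction $u_i$ lies in $\NN_Z$ (because $u$ solves \eqref{NLSdk}, multiplying by $u_i$ and integrating gives the Nehari identity, the boundary terms vanishing since on $\partial\Omega_i$ either $u=0$ or, at a degree-one vertex not in $Z$, $u'=0$). Instead of keeping both positive components, simply \emph{delete} one of them: set $v = u$ on $\G\setminus\Omega_1$ and $v=0$ on $\Omega_1$. Then $v^- = u^- \in \NN_Z$, while $v^+ = u^+ - u_1$ is nonzero (since $\Omega_2$ survives) and still satisfies the Nehari identity (subtract the identity for $u_1$ from that for $u^+$). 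Hence $v\in\MM_Z$ and
\[
J(v) = \kappa\|v\|_p^p = \kappa\|u\|_p^p - \kappa\|u\|_{L^p(\Omega_1)}^p < J(u),
\]
contradicting minimality. This works uniformly regardless of whether there are three or more nodal domains, and regardless of compactness.
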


\begin{proof} Assume for contradiction that there are at least three nodal domains. Up to a change of sign, we can make sure that on at least two of them $u$ is positive, and we call $\G_1$ one of the two. Since $u$ solves \eqref{eq:NLS},  multiplying by $u$ and integrating on $\G_1$ we have
	\begin{equation}
          \label{G1}
          \int_{\G_1} (|u'|^2 + \lambda |u|^2  - |u|^p)\dx=  uu'_{|\partial \G_1} + \int_{\G_1}(-u'' + \lambda u  - |u|^{p-2}u)u \dx   =   0,
	\end{equation}
	because on $\partial \G_1$ either $u = 0$  or $u'=0$ (this happens at vertices of degree $1$ not in $Z$).
	
	Now we define $v \in H^1_Z(\G)$ by
	\[
	v(x):=\begin{cases} u(x) & \text{ if } x \in \G \setminus \G_1, \\ 0 & \text{ if } x \in\G_1, \end{cases}
	\]
	and we observe that $v^- = u^- \in \NN_Z$ and that $v^+$ (not identically zero by construction) satisfies
	\[
	\int_{\G}( |(v^+)'|^2 + \lambda |v^+|^2  - |v^+|^p)\dx = \int_{\G} (|(u^+)'|^2 + \lambda |u^+|^2  - |u^+|^p)\dx -
	\int_{\G_1} (|u'|^2 + \lambda |u|^2  - |u|^p)\dx =0
	\]
	by \eqref{G1} and because $u^+ \in \NN_Z$. Therefore $v \in \MM_Z$ and
	\[
	J(v) = \kappa \|v\|_{L^p(\G)}^p =  \kappa \|u\|_{L^p(\G)}^p -  \kappa \|u\|_{L^p(\G_1)}^p <  \kappa \|u\|_{L^p(\G)}^p = J(u),
	\]
	violating the minimality of $u$.
\end{proof}

To conclude we are left to prove Theorem \ref{nodalprop}. To this end, we will actually prove three independent statements,
the full proof of  Theorem \ref{nodalprop} then following by their combination. Each of these statements exhibits a graph supporting a nodal ground state with nodal set respectively given by 
\begin{itemize}

\item[1)]  $k$ isolated points;

\item[2)] $m \ge 2$ half-lines all attached to the same vertex;

\item[3)] $n$ line segments all attached to the same vertex.

\end{itemize}

These three constructions, though mutually independent, can all be carried out on the same kind of graph, that we now describe. Given $N\in \N$ and $L>0$, let $\vv_1,\vv_2$ be two vertices joined by $N$ edges $e_1, \dots, e_N$, each of length $L$. Attach to $\vv_1$ a pendant and a half-line and do the same to $\vv_2$. In this way we obtain the graph $\G_{N,L}$ depicted in Figure \ref{fig:G_Nl}. Throughout, we fix $\lambda >0$ and the length of the two pendants so that nodal ground states in $\MM_{\lambda}(\G_{N,L})$ exist (independently of any other feature of the graph), which is possible by Theorem \ref{baffolungo}.

\medskip

\noindent {\it Proof of 1).}  Here we show that, for a suitable choice of $L$, the graph  $\G_{k,L}$ admits a nodal ground state $u$ such that $u^{-1}(0)$ consists of $k$ isolated points.

\begin{proposition} 
\label{Prop 6.2}
For every $k \in \N$ there exists $\overline L>0$, depending on $\lambda$ and $k$, such that, for every $L \geq \overline L$, every nodal ground state $u$ on $\G_{k,L}$ has a nodal set of the form $u^{-1}(0)=\{x_1, \ldots, x_k\}$, where $x_i$ belongs to the interior of the edge $e_i$. 
\end{proposition}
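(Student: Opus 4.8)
The plan is to show that, when $L$ is large, a nodal ground state $u$ on $\G_{k,L}$ must be symmetric under the permutation group of the $k$ parallel edges $e_1,\dots,e_k$, hence it takes a common profile on each $e_i$; the nodal set is then forced to consist of exactly $k$ points, one interior to each $e_i$, by a careful look at the structure of the two nodal domains given by Theorem \ref{nodaldom}. First I would recall that by Theorem \ref{thm:noex_NGS} and Theorem \ref{baffolungo} nodal ground states exist for every $L$ once the pendants are long enough, and that by Theorem \ref{nodaldom} any such $u$ has exactly two nodal domains $\Omega^+,\Omega^-$, with $u^\pm$ each a (positive, up to sign) ground state-type critical point on its own nodal domain. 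The key quantitative input is that $J_\lambda(u^+)+J_\lambda(u^-)=\inf_{\MM_Z}J_\lambda$ stays uniformly bounded in $L$ (it is at most $\tfrac32 s_\lambda$, from the competitor built on the two pendants in the proof of Theorem \ref{baffolungo}), while Proposition \ref{boundedness} forces $\inf_n\|u\|_\infty>0$; combined with the Cauchy--Schwarz/diameter estimate as in the proof of Theorem \ref{thm:noex_NGS}, this shows the $L^\infty$-norm of both $u^+$ and $u^-$ is attained on the bounded core and that the mass of $u$ on the long edges $e_i$ cannot all be concentrated away from $\vv_1,\vv_2$.

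Next I would argue that $u$ cannot have any zero on the pendants or half-lines: on a half-line $u$ solves the ODE with zero Neumann-type behaviour at infinity, so it has constant sign there; on a pendant, if $u$ vanished at an interior point one could, as in the proof of Theorem \ref{nodaldom}, cut off the piece beyond that zero and strictly lower $J_\lambda$ while staying in $\MM_Z$ — contradiction (here one uses that after the cut the remaining positive part is still nonzero because $u$ attains its sup on the core, not on the pendant). Therefore $u^{-1}(0)\subseteq \bigcup_i e_i\cup\{\vv_1,\vv_2\}$. Since $u$ changes sign and the two nodal domains are connected, the edges $e_i$ must carry the transition between the two signs. Now the symmetrization step: consider the graph automorphism swapping $e_i\leftrightarrow e_j$; averaging or, more robustly, a rearrangement/symmetric competitor argument shows that a minimizer may be taken with $u_{|e_i}$ all equal up to the obvious parametrization. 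With a common profile $f:[0,L]\to\R$ on every $e_i$ satisfying the NLS ODE with Kirchhoff conditions at the two endpoints (coupled through $u(\vv_1),u(\vv_2)$ and the pendant/half-line data), $f$ has at most one interior zero for $L$ large: indeed $f$ is a piece of an explicit ODE trajectory, and if it had two zeros the edge would contain a full ``bubble'' of action $\ge s_\lambda$, which together with the nonzero contribution of the rest of the graph would push $J_\lambda(u)$ above $s_\lambda+\inf_{\NN_Z}J_\lambda$, contradicting \eqref{MZlevel0}.

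It remains to rule out the degenerate possibilities that $f$ has \emph{no} interior zero (so that the sign change happens at $\vv_1$ or $\vv_2$) or that $u(\vv_1)=0$ or $u(\vv_2)=0$. If the sign change occurred only at, say, $\vv_1$, then $u>0$ on one whole side and $u<0$ on the other; but then $u^+$ would live on a subgraph containing the half-line and pendant at $\vv_1$ plus possibly part of the core, and $u^-$ on the analogous piece at $\vv_2$, and one checks via the two-preimage trick (\cite[Proposition~2.7 or 2.8]{DDGS}) that $J_\lambda(u^+)>s_\lambda$ or $J_\lambda(u^-)>s_\lambda$, again contradicting \eqref{MZlevel0}. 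A similar argument excludes $u(\vv_j)=0$. Hence each $e_i$ carries exactly one interior zero $x_i$, the nodal set is $\{x_1,\dots,x_k\}$, and (by the common-profile symmetry) the $x_i$ are in fact at the same position on each edge. \textbf{Main obstacle.} The delicate point I expect to be hardest is the symmetrization/common-profile step together with the rigidity claim ``at most one interior zero on $e_i$'': making precise why a nodal ground state must be symmetric in the $k$ parallel edges (as opposed to concentrating asymmetrically) requires either a clean rearrangement argument compatible with the mixed boundary data and the two nodal domains, or a direct ODE-shooting analysis on each $e_i$ using that the endpoint values $u(\vv_1),u(\vv_2)$ are common to all edges and that the total action budget $\le \tfrac32 s_\lambda$ leaves no room for more than one transition; choosing $\overline L$ large is exactly what buys this room.
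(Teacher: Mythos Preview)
Your proposal has a real gap and also takes an unnecessarily hard route.

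The symmetrization step is not needed, and is precisely where you locate the main difficulty. The paper bypasses it with the following direct observation: once you know $u$ has no zero on the closure of the pendants (and none on the half-lines, which is immediate from the ODE), it suffices to prove that $u(\vv_1)\,u(\vv_2)<0$. Indeed, if the two vertex values had the same sign (or one vanished), then, since $u$ changes sign and has exactly two nodal domains, one of them---say $\{u>0\}$---would be contained in an interval $(\bar x_1,\bar x_2)$ inside a single edge $e_i$ with $u(\bar x_1)=u(\bar x_2)=0$. Then $u^+$, viewed as a compactly supported function on $\R$, lies in $\NN_\lambda(\R)$ but is not a soliton, so $J(u^+)>s_\lambda$; together with $J(u^-)\ge \inf_{\NN_\lambda(\G_{k,L})}J$ this contradicts \eqref{MZlevel0}. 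Once $u(\vv_1)\,u(\vv_2)<0$, every $e_i$ has at least one zero by continuity and at most one by Theorem \ref{nodaldom}, and the proof is finished---no symmetry, no common profile, no shooting, and the conclusion holds for \emph{every} nodal ground state, not just for a symmetric representative.

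Second, your argument excluding zeros on a pendant does not work as stated. If $u$ vanishes at $x_0$ on a pendant $p$ and is positive between the tip and $x_0$, then by Theorem \ref{nodaldom} the set $\{u>0\}$ is \emph{exactly} that segment; cutting it off leaves a one-signed function, not an element of $\MM_Z$, so no contradiction with minimality follows. The paper handles this step via two auxiliary level comparisons with the star graph $\overline\G_{k+1}$ (one pendant and $k+1$ half-lines meeting at a vertex): one shows $\lim_{L\to\infty}\inf_{\NN_\lambda(\G_{k,L})}J=\inf_{\NN_\lambda(\overline\G_{k+1})}J$, the other that $\limsup_{L\to\infty}\inf_{\MM_\lambda(\G_{k,L})}J\le 2\inf_{\NN_\lambda(\overline\G_{k+1})}J$. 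If $u^+$ lives on part of a pendant with a zero at $x_0$, then $J(u^+)\ge \inf\{J(v):v\in\NN_\lambda(0,|p|),\,v(|p|)=0\}>\inf_{\NN_\lambda(\overline\G_{k+1})}J$ by a fixed margin $\delta>0$ independent of $L$; combined with $J(u^-)\ge\inf_{\NN_\lambda(\G_{k,L})}J$, this contradicts the two lemmas for $L$ large. It is this quantitative comparison with $\overline\G_{k+1}$---not a diameter estimate---that produces the threshold $\overline L$.
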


To prove this proposition we consider also the graph $\overline \G_{k+1}$ made of $k+1$ half-lines and a pendant all attached at the same vertex. The length of the pendant of $\overline \G_{k+1}$ coincides with that of the two pendants of  $\G_{k,L}$. Hence, by Theorem \ref{baffolungo}, ground states exist in $\NN_\lambda(\overline \G_{k+1})$.
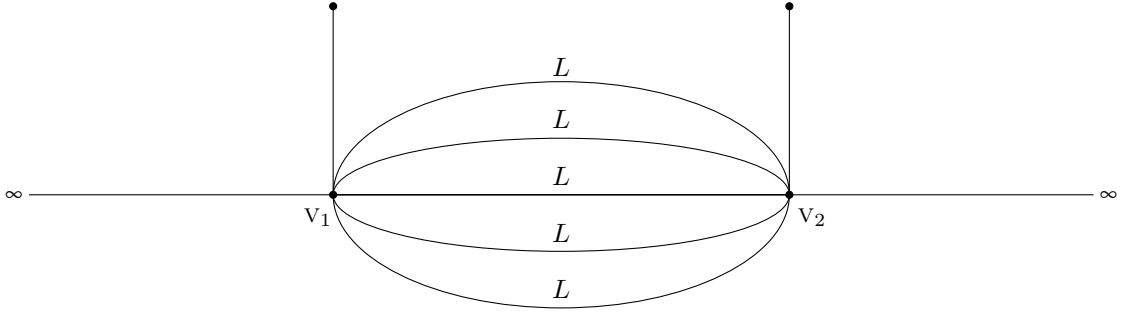
\begin{figure}[t]
	\centering
	\begin{tikzpicture}
		\draw (-7,0) -- (-3,0);
		\draw (3,0) -- (7,0);
		\node at (-7.2,0) [infinito] {$\scriptstyle\infty$};
		\node at (7.2,0) [infinito] {$\scriptstyle\infty$};
		\draw (0, 0) ellipse (3 and 0.);
		\draw (0, 0) ellipse (3 and 0.75);
		\draw (0, 0) ellipse (3 and 1.5);
		\draw (-3, 0) -- (-3, 2.5);
		\draw (3, 0) -- (3, 2.5);
		\node at (-3, 0) [nodo] {};
		\node at (-3, 2.5) [nodo] {};
		\node at (-3.2, -0.3) {$\vv_1$};
		\node at (3, 0) [nodo] {};
		\node at (3, 2.5) [nodo] {};
		\node at (3.3, -0.3) {$\vv_2$};
		\node at (0, 1.7) {$L$};
		\node at (0, 1.) {$L$};
		\node at (0, 0.25) {$L$};
		\node at (0, -0.5) {$L$};
		\node at (0, -1.25) {$L$};
		\node at (-3.2, 1.25) {};
		\node at (3.3, 1.25) {};
	\end{tikzpicture}
	\caption{The graph $\G_{N, L}$ with $N=5$.}
	\label{fig:G_Nl}
\end{figure}

\begin{lemma}
\label{lem 1}
For every $k \in \N$, there results 
\[
\lim_{L\to \infty} \inf_{v\in \NN_{\lambda}(\G_{k,L})}J(v)=\inf_{v\in \NN_{\lambda}(\overline \G_{k+1})}J(v).
\]
\end{lemma}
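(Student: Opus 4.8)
The plan is to prove separately the two bounds
$\limsup_{L\to\infty}\inf_{v\in\NN_\lambda(\G_{k,L})}J(v)\le\inf_{v\in\NN_\lambda(\overline{\G}_{k+1})}J(v)$
and
$\liminf_{L\to\infty}\inf_{v\in\NN_\lambda(\G_{k,L})}J(v)\ge\inf_{v\in\NN_\lambda(\overline{\G}_{k+1})}J(v)$,
along the lines of Lemma~\ref{Gl}. The heuristic is that, as $L\to\infty$, the part of $\G_{k,L}$ near $\vv_1$ (and, by the obvious symmetry, near $\vv_2$) becomes indistinguishable from $\overline{\G}_{k+1}$: the $k$ edges $e_1,\dots,e_k$ of length $L$ behave like half-lines, and together with the half-line and the pendant attached at $\vv_1$ they account for exactly $k+1$ half-lines and one pendant. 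The precise fact to be used throughout is that, for any finite $\ell_1,\dots,\ell_k>0$, the graph formed by a vertex with $k$ attached segments of lengths $\ell_1,\dots,\ell_k$, one half-line, and one pendant of the same length as the pendants of $\G_{k,L}$, is isometric to a subgraph of $\overline{\G}_{k+1}$ (keep the pendant, one of its half-lines, and the initial portions $[0,\ell_i]$ of the remaining $k$ half-lines). Hence any $H^1$ function on such a graph that vanishes at the free endpoints of the $k$ segments extends by $0$ to an $H^1$ function on $\overline{\G}_{k+1}$ with the same $L^2$-norms of the function and of its derivative and the same $L^p$-norm, and therefore lying in the same parametrized Nehari manifold.

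For the first bound I would fix $\eps>0$ and, by density and continuity of $\pr$, take a nonnegative $u\in\NN_\lambda(\overline{\G}_{k+1})$ with compact support and $J(u)\le\inf_{v\in\NN_\lambda(\overline{\G}_{k+1})}J(v)+\eps$. As soon as $L>\diam(\supp u)$, one can regard $u$ as a function on $\G_{k,L}$ by sending the pendant of $\overline{\G}_{k+1}$ onto the pendant at $\vv_1$, its central half-line onto the half-line at $\vv_1$, each of the other $k$ half-lines onto an edge $e_i$ (identified with $[0,L]$), and extending by $0$ elsewhere; since all norms are preserved, $u\in\NN_\lambda(\G_{k,L})$ with the same value of $J$. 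Thus $\inf_{v\in\NN_\lambda(\G_{k,L})}J(v)\le\inf_{v\in\NN_\lambda(\overline{\G}_{k+1})}J(v)+\eps$ for all large $L$; letting $L\to\infty$ and then $\eps\to0$ gives the first bound.

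The main work is the reverse bound, which I would carry out as in Lemma~\ref{Gl}. Choose $u_L\in\NN_\lambda(\G_{k,L})$ with $u_L\ge0$ and $J(u_L)\le\inf_{v\in\NN_\lambda(\G_{k,L})}J(v)+1/L$; by Proposition~\ref{2preim}, $J(u_L)\le s_\lambda$, so Proposition~\ref{boundedness} (whose constants are uniform over ${\bf G}$ since $\lambda>0$) yields a uniform $H^1$ bound on $(u_L)_L$ and a uniform positive lower bound on $\|u_L\|_\infty$. On each edge $e_i$, identified with $[0,L]$, the average of $u_L^2$ over $[L/3,2L/3]$ is at most $3\|u_L\|_2^2/L$, hence there is $x_i^L\in[L/3,2L/3]$ with $u_L(x_i^L)\le(3\|u_L\|_2^2/L)^{1/2}$; set $\delta_L:=\max_{1\le i\le k}u_L(x_i^L)$, so $\delta_L\to0$. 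Then $w_L:=(u_L-\delta_L)^+$ is nonzero for $L$ large, vanishes at each $x_i^L$, and by the same truncation estimate used in \eqref{nlambda}--\eqref{simil} (for which one checks $\|u_L\|_p^p-\|w_L\|_p^p\to0$ by splitting according to $u_L<\delta_L$, $\delta_L\le u_L\le C\delta_L$, and $u_L>C\delta_L$, then letting $C\to\infty$) the projection $v_L:=\pr(w_L)\in\NN_\lambda(\G_{k,L})$ satisfies $n_\lambda(w_L)=1+o(1)$ and $J(v_L)\le J(u_L)+o(1)$, while $v_L$ still vanishes at the $x_i^L$. Cutting $\G_{k,L}$ at $x_1^L,\dots,x_k^L$ disconnects it into two subgraphs $\G^1_L\ni\vv_1$ and $\G^2_L\ni\vv_2$, each made of $k$ segments of length $\ge L/3$, a half-line, and a pendant, and $v_L=v_L^1+v_L^2$ with $v_L^i:=v_L|_{\G^i_L}$ having disjoint supports. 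Taking $\theta_i$ with $v_L^i\in\NN_{\theta_i}(\G^i_L)$ when $v_L^i\ne0$ (and $\theta_i:=0$ otherwise), the Nehari identity for $v_L$ gives, exactly as in \eqref{coco}, that $\lambda$ is a convex combination of $\theta_1$ and $\theta_2$, so $\theta_j\ge\lambda$ for some $j$, and necessarily $v_L^j\ne0$. Since $v_L^j$ vanishes at the free ends of its $k$ segments, the observation above lets us view it as an element of $\NN_{\theta_j}(\overline{\G}_{k+1})$, whence by Remark~\ref{rem:Jincr} (monotonicity of the Nehari level in the parameter, valid as $\theta_j\ge\lambda>0=-\omega(\overline{\G}_{k+1})$)
\[
\kappa\|v_L^j\|_p^p=J_{\theta_j}(v_L^j)\ge\inf_{v\in\NN_{\theta_j}(\overline{\G}_{k+1})}J_{\theta_j}(v)\ge\inf_{v\in\NN_\lambda(\overline{\G}_{k+1})}J_\lambda(v).
\]
Hence $J(u_L)+o(1)\ge J(v_L)=\kappa(\|v_L^1\|_p^p+\|v_L^2\|_p^p)\ge\kappa\|v_L^j\|_p^p\ge\inf_{v\in\NN_\lambda(\overline{\G}_{k+1})}J(v)$, and taking $\liminf_{L\to\infty}$, together with the first bound, completes the proof. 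The only genuine obstacle is the bookkeeping already met in Lemma~\ref{Gl}: making the truncation/projection estimate uniform in $L$, and checking that the pieces produced by the cut really embed in $\overline{\G}_{k+1}$; no new idea is required.
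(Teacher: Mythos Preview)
Your proof is correct and follows exactly the route the paper indicates: the paper's own argument simply verifies $\delta_L:=\max_i\min_{e_i}|u_L|\to0$ and then says ``this allows one to obtain \eqref{enough} working exactly as in the proof of Lemma~\ref{Gl}'', which is precisely what you have spelled out (truncate by $\delta_L$, project onto the Nehari manifold, cut at the near-zeros, and use the convex-combination/monotonicity trick on the two resulting pieces, each of which embeds in $\overline{\G}_{k+1}$). The only cosmetic difference is that you pick $x_i^L$ in the middle third of $e_i$ via the $L^2$ bound, whereas the paper takes the global minimum on $e_i$; both choices give $\delta_L\to0$ and the rest is identical.
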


\begin{proof}
The argument is similar to that in the proof of Lemma \ref{Gl}. Using suitable compactly supported functions in $ \NN_{\lambda}(\overline \G_{k+1})$, one immediately checks that
\[
\limsup_{L\to \infty} \inf_{v\in \NN_{\lambda}(\G_{k,L})}J(v)\leq\inf_{v\in \NN_{\lambda}(\overline \G_{k+1})}J(v).
\]
To show that 
\begin{equation}
\label{enough}
\liminf_{L\to \infty} \inf_{v\in \NN_{\lambda}(\G_{k,L})}J(v) \ge \inf_{v\in \NN_{\lambda}(\overline \G_{k+1})}J(v),
\end{equation}
it is enough to note that, if $u_L \in \NN_{\lambda}(\G_{k,L})$ satisfies
\begin{equation*}
J(u_L) \le \inf_{v \in \NN_{\lambda}(\G_{k,L})}J(v) +\frac1L,
\end{equation*}
then 
\[
\max_{1 \le i \le k} \min_{x \in e_i} |u_L(x)| \to 0 \qquad\text{ as } L \to \infty.
\]
This allows one to obtain \eqref{enough} working exactly as in the proof of Lemma \ref{Gl}.
\end{proof}

\begin{lemma}
\label{lem 2}
If $L\to\infty$, then
\[
\limsup_{L\to \infty}\inf_{v\in \MM_{\lambda}(\G_{k,L})}J(v)\leq 2 \inf_{v\in \NN_{\lambda}(\overline \G_{k+1})}J(v).
\]
\end{lemma}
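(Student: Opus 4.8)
The plan is to exhibit, for every $L$ large, a test function $w_L\in\MM_\lambda(\G_{k,L})$ whose action is bounded above by $2\inf_{v\in\NN_\lambda(\overline\G_{k+1})}J(v)+o(1)$ as $L\to\infty$. The natural candidate is a function that on each half of $\G_{k,L}$ looks like a ground state on $\overline\G_{k+1}$: the idea is to place a positive bump near $\vv_1$ (using its pendant and half-line together with pieces of the $k$ connecting edges $e_1,\dots,e_k$) and a negative bump near $\vv_2$, making them vanish before reaching the midpoint of each $e_i$ so that the two bumps have disjoint supports and the resulting function is continuous and lies in $\MM_\lambda(\G_{k,L})$.

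The key steps, in order, are as follows. First, by density, fix $\eps>0$ and pick a nonnegative $\bar u\in\NN_\lambda(\overline\G_{k+1})$ with compact support such that $J(\bar u)\le\inf_{v\in\NN_\lambda(\overline\G_{k+1})}J(v)+\eps$; let $M>0$ be such that on each half-line and on the pendant of $\overline\G_{k+1}$ the function $\bar u$ vanishes outside $[0,M]$. Second, for $L\ge M$ identify the vertex of $\overline\G_{k+1}$ with $\vv_1$, matching its pendant with the pendant at $\vv_1$ and its $k+1$ half-lines with the half-line at $\vv_1$ and the $k$ edges $e_1,\dots,e_k$ (restricted to $[0,M]\subset[0,L]$); this realizes $\bar u$, extended by $0$, as a function $u_1\in\NN_\lambda(\G_{k,L})$ supported in the pendant, the half-line, and the first $M$-portion of each $e_i$ measured from $\vv_1$. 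Third, do the symmetric construction at $\vv_2$ to obtain $u_2\in\NN_\lambda(\G_{k,L})$ supported near $\vv_2$; since $L\ge 2M$ the supports of $u_1$ and $u_2$ are disjoint (they meet $e_i$ only in $[0,M]$ from $\vv_1$ and $[0,M]$ from $\vv_2$ respectively). Fourth, set
\[
w_L(x):=\begin{cases} u_1(x) & \text{if }x\in\supp(u_1),\\ -u_2(x) & \text{if }x\in\supp(u_2),\\ 0 & \text{elsewhere on }\G_{k,L}.\end{cases}
\]
Then $w_L^+=u_1\in\NN_\lambda(\G_{k,L})$ and $w_L^-=-u_2$ with $u_2\in\NN_\lambda(\G_{k,L})$, so $w_L\in\MM_\lambda(\G_{k,L})$, and
\[
J(w_L)=J(u_1)+J(u_2)=2J(\bar u)\le 2\inf_{v\in\NN_\lambda(\overline\G_{k+1})}J(v)+2\eps.
\]
Fifth, since this holds for every $L\ge 2M$ (with $M$ depending on $\eps$ but not on $L$), taking $\limsup_{L\to\infty}$ and then letting $\eps\to 0$ yields the claim.

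I do not expect a genuine obstacle here; the construction is a straightforward variant of the gluing arguments already used in the proofs of Theorem \ref{baffolungo} and Lemma \ref{Gl}. The only point that deserves a line of care is checking that $\bar u$ can indeed be transplanted onto $\G_{k,L}$ as claimed: this uses that $\overline\G_{k+1}$ has exactly $k+1$ half-lines plus one pendant, matching the $k$ edges $e_i$, the half-line, and the pendant emanating from $\vv_1$, and that continuity at $\vv_1$ is preserved because $\bar u$ takes a single value at the common vertex; the compact support of $\bar u$ (before the midpoints of the $e_i$) guarantees both that the extension by zero is $H^1$ and that the two bumps do not interact.
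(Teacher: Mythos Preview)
Your argument is correct and is precisely the construction the paper has in mind: the paper's proof merely says it ``follows the same lines as the one of Theorem~\ref{thm:ex_NGS}'', and what you wrote is exactly the detailed instantiation of that gluing argument, placing two compactly supported copies of an approximate ground state on $\overline\G_{k+1}$ with opposite signs near $\vv_1$ and $\vv_2$. The only minor slip is the phrase ``on the pendant $\bar u$ vanishes outside $[0,M]$'', which is unnecessary (the pendant is bounded and its length equals that of the pendants of $\G_{k,L}$, so the transplant there is automatic); the compact-support condition is only needed on the half-lines to guarantee the embedding into the finite edges $e_i$ for $L\ge 2M$.
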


\begin{proof}
The proof follows the same lines as the one of Theorem \ref{thm:ex_NGS}.
\end{proof}

\begin{proof}[Proof of Proposition \ref{Prop 6.2}]
Let $u$ be a nodal ground state in $ \MM_{\lambda}(\G_{k,L})$.
\smallskip

{\it Step 1: for $L$ long enough, either $u \equiv 0$ on the pendants or it has no zero on their closure.}
Assume by contradiction that $u\not\equiv 0$ on a pendant $p$, but $u(x_0) = 0$ for some $x_0$ on $p$.  With no loss of generality, let $u>0$ at the vertex of degree one of $p$. Outside $p$, $u < 0$ thanks to Theorem~\ref{nodaldom}.  Denoting as usual by $|p|$ the length of $p$, since $u$ is a solution to \eqref{eq:NLS}, we have $u^+\in \NN_{\lambda}(0,|p|)$ with $u^+(|p|)=0$, so that
\[
  J(u^+) \ge \inf_{\substack{v\in \NN_{\lambda}(0,|p|)\\ v(|p|)=0}} J(v)
  > \inf_{v\in \NN_{\lambda}(\overline \G_{k+1})}J(v).
\]
This is because the pendant of $\overline \G_{k+1}$ can be identified with the interval $[0,|p|]$, but $u^+$ is not a ground state in $\NN_{\lambda}(\overline \G_{k+1})$ as ground states never vanish.

Letting then 
\[
  \delta := \inf_{\substack{v\in  \NN_{\lambda}(0,|p|)\\ v(|p|)=0}} J(v)
  - \inf_{v\in \NN_{\lambda}(\overline \G_{k+1})}J(v)>0
\]
and recalling that $u^- \in \NN_{\lambda}(\G_{k,L})$, it follows 
\[
\inf_{v\in \MM_{\lambda}(\G_{k,L})}J(v)=J(u)=J(u^-)+ J(u^+)\geq \inf_{v\in \NN_{\lambda}(\G_{k,L})}J(v)+\inf_{v\in \NN_{\lambda}(\overline \G_{k+1})}J(v)+\delta,
\]
contradicting Lemmas \ref{lem 1}--\ref{lem 2} for $L$ large enough.
\smallskip

{\it Step 2: there holds $u(\vv_1)u(\vv_2)<0$.}
Assume that this is not the case. Since $u$ solves \eqref{eq:NLS}, on any of the two half-lines either $u\equiv 0$ or it never vanishes. Combining with Step 1, this implies that there exist $i\in\{1,\ldots, k\}$ and $\bar x_1$, $\bar x_2\in e_i\cup\{\vv_1,\vv_2\}$ with $u(\bar x_1)=u(\bar x_2)=0$ and, for all $x\in (\bar x_1, \bar x_2)$,  $u\not= 0$. Without loss of generality, let   $u>0$ on $(\bar x_1, \bar x_2)$. By Theorem \ref{nodaldom}, we know that $u<0$ on the 
remaining part of the graph and $u^-\in \NN_{\lambda}(\G_{k,L})$, while  we can think of $u^+$ as a function in  $\NN_{\lambda}(\mathbb R)$ with compact support. Hence we have
\[
\inf_{v\in \MM_{\lambda}(\G_{k,L})}J(v)=J(u^+)+J(u^-)>s_{\lambda}+ \inf_{v\in \NN_{\lambda}(\G_{k,L})}J(v),
\]
which contradicts Theorem \ref{thm:nonexhalf}.
\smallskip

{\it Step 3: conclusion.}
The previous steps ensure that $ u^{-1}(0)\subset \bigcup_{i=1}^N e_i$ and that it is a finite union of points by uniqueness  of the solution of the Cauchy problem for \eqref{eq:NLS}. The uniqueness of the zero of $u$ on each $e_i$ follows then by Theorem \ref{nodaldom}.
\end{proof}
\medbreak

\noindent {\it Proof of 2):} here we prove the following result. 

\begin{proposition}
\label{Prop 6.6}
Let $m\geq 2$. There exists a graph $\overline \G$ that admits a nodal ground state $u$ such that $u^{-1}(0)$ is the union of $m \ge 2$ half-lines attached at the same vertex.
\end{proposition}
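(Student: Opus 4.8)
\textbf{Proof proposal for Proposition \ref{Prop 6.6}.}

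The plan is to build $\overline\G$ as a variant of the graph $\G_{N,L}$ from Figure~\ref{fig:G_Nl}, but with the two half-lines replaced by bunches of half-lines. Concretely, I would take two vertices $\vv_1,\vv_2$ joined by $N$ edges of length $L$ (the ``bubble'' part), attach a pendant of the fixed length to each of $\vv_1$ and $\vv_2$ so that nodal ground states exist by Theorem~\ref{baffolungo}, and then attach $m$ half-lines to $\vv_1$ (and, for symmetry, $m$ half-lines to $\vv_2$; the exact count on the $\vv_2$ side can be adjusted as needed). The set $Z$ is empty throughout. The point is that a nodal ground state $u$ on this graph, for $L$ large, must (i) have exactly two nodal domains by Theorem~\ref{nodaldom}, (ii) vanish identically on one of the two bunches of half-lines, and (iii) be nonzero there on the other bunch — which forces $u^{-1}(0)$ to contain the full $m$ half-lines attached at $\vv_1$ (say). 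The nodal set can then be arranged, by the argument of Proposition~\ref{Prop 6.2}, to consist of \emph{exactly} these $m$ half-lines (plus possibly isolated points on the connecting edges, which one either tolerates or eliminates by taking $N$ or the internal geometry so that the zero sits at the vertex; but for the bare statement ``$u^{-1}(0)$ is the union of $m$ half-lines'', one just needs a graph realizing this, so I would choose $N$ and $L$ to make the construction clean, e.g.\ with a single connecting edge $N=1$ so the only freedom for a zero is on that one edge, then argue the zero lands where we want).

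The key steps, in order, are as follows. First, establish the analogue of Lemma~\ref{lem 1}: as $L\to\infty$, $\inf_{v\in\NN_\lambda(\overline\G)}J(v)$ converges to $\inf_{v\in\NN_\lambda(\overline\G_\infty)}J(v)$, where $\overline\G_\infty$ is the ``limit'' graph obtained by detaching the bubble part — concretely a graph consisting of the pendant together with the $m$ half-lines at one vertex. This is the same min-over-edges-goes-to-zero argument used in Lemma~\ref{Gl} and Lemma~\ref{lem 1}. Second, establish the analogue of Lemma~\ref{lem 2}: $\limsup_{L\to\infty}\inf_{v\in\MM_\lambda(\overline\G)}J(v)\le 2\inf_{v\in\NN_\lambda(\overline\G_\infty)}J(v)$, by gluing two sign-opposite copies of a nearly optimal $\NN_\lambda(\overline\G_\infty)$ function far apart, exactly as in the proof of Theorem~\ref{thm:ex_NGS}. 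Third, take a nodal ground state $u$ on $\overline\G$ for $L$ large and run the three-step dichotomy of Proposition~\ref{Prop 6.2}: Step~1, $u$ is either identically $0$ on each pendant or nonvanishing on its closure — otherwise $u^+$ (or $u^-$) restricted to that pendant is a non-ground-state element of $\NN_\lambda$ on the pendant-type graph, costing an amount $\delta>0$ bounded below independently of $L$, contradicting the first two lemmas for $L$ large; Step~2, $u$ cannot be one-signed on a half-line and have a zero ``between'' two half-lines or between a half-line and a pendant — otherwise one reads off $\#u^{-1}(t)\ge 2$ and hence $J(u^+)\ge s_\lambda$, contradicting \eqref{MZlevel0}/Theorem~\ref{thm:nonexhalf}; Step~3, combining these with Theorem~\ref{nodaldom} (exactly two nodal domains) forces $u$ to be, say, $\le 0$ on the whole bunch of $m$ half-lines at $\vv_1$ and $\ge 0$ elsewhere, and moreover $u$ must vanish identically on a set of half-lines attached at a single vertex. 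The crucial extra observation, specific to this proposition, is that if $u$ were nonzero on even one of the $m$ half-lines at $\vv_1$, then since solutions on a half-line with Neumann condition at the vertex are nonzero everywhere or zero everywhere, and since $u$ is continuous at $\vv_1$ with a Kirchhoff condition there, one derives a contradiction with the sign pattern (the $m$ derivatives at $\vv_1$ would all point the same way, breaking Kirchhoff) — hence $u\equiv 0$ on all $m$ of them, i.e.\ $u^{-1}(0)\supseteq$ these $m$ half-lines, and by Step~2 plus uniqueness for the Cauchy problem $u^{-1}(0)$ is \emph{exactly} the union of those $m$ half-lines.

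The main obstacle I anticipate is Step~3's sign/Kirchhoff bookkeeping: one must be careful that $u$ does not manage to be nonzero and one-signed on \emph{all} $m$ half-lines at $\vv_1$ simultaneously while still satisfying the Kirchhoff condition. The resolution is that a nonzero $L^2$-solution of $u''+|u|^{p-2}u=\lambda u$ on a half-line with $u'=0$ nowhere prescribed at the finite endpoint, but which must decay at infinity, necessarily has $u'(0)$ with the opposite sign to $u(0)$ (it is a decaying tail of the soliton); so $m$ such tails at $\vv_1$, all of the same sign as $u(\vv_1)$, contribute $m$ outgoing derivatives all of sign $-\operatorname{sign}u(\vv_1)$, and this cannot be balanced by the finitely many remaining edges at $\vv_1$ unless $u(\vv_1)=0$, in which case all the tails vanish identically. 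Once this is pinned down, the rest is a direct transcription of the machinery already developed for Proposition~\ref{Prop 6.2}. I would also double-check the trivial but necessary point that $\overline\G\in\mathbf G$ (finitely many edges, degrees finite, lengths bounded below), which is immediate from the construction.
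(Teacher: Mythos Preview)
Your approach has a genuine gap and in fact works against itself. If Steps~1--2 of the Proposition~\ref{Prop 6.2} argument go through on your modified graph, their conclusion is precisely that $u(\vv_1)u(\vv_2)<0$: both values are \emph{nonzero} and of opposite signs. But then, on every half-line attached at $\vv_1$ (or $\vv_2$), $u$ is a nontrivial tail of the soliton $\phi_\lambda$ and never vanishes; the unique zero of $u$ lies in the interior of the connecting edge, exactly as in Proposition~\ref{Prop 6.2}. So the very dichotomy you invoke forces $u$ to be nonzero on all the half-lines, the opposite of what you want. Your Kirchhoff ``rescue'' in Step~3 does not help: at $\vv_1$ (with $N=1$) there are $m$ half-lines plus the pendant and the connecting edge; the $m$ outgoing derivatives along the half-lines all have sign $-\operatorname{sign}u(\vv_1)$, but nothing prevents the two remaining outgoing derivatives from being large enough to balance them. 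There is no a priori bound making this impossible, so you cannot conclude $u(\vv_1)=0$.

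The paper's argument is entirely different and much shorter. It first takes the graph $\G_{1,L}$ with $L$ large enough that Proposition~\ref{Prop 6.2} applies, so there is a nodal ground state $u$ whose zero set is a single point $x_0$ in the interior of the connecting edge. Then it attaches the $m$ half-lines at \emph{that specific point} $x_0$ to form $\overline\G$, and extends $u$ by $0$ on the new half-lines to get $\overline u\in\MM_\lambda(\overline\G)$ with $J(\overline u)=J(u)$. The key input is Lemma~\ref{Lem 6.5}, which says that attaching $m\ge2$ half-lines at a point of a graph can only \emph{raise} the nodal ground state level: $\inf_{\MM_\lambda(\overline\G)}J\ge\inf_{\MM_\lambda(\G_{1,L})}J$. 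Combined with $J(\overline u)=\inf_{\MM_\lambda(\G_{1,L})}J$, this shows $\overline u$ is a nodal ground state on $\overline\G$, and by construction $\overline u^{-1}(0)$ is exactly the $m$ half-lines. The point you are missing is that one should attach the half-lines where the zero already is, not at a pre-chosen vertex and then try to force the zero to migrate there.
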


The graph $\overline \G$ is obtained from the graph $\G_{1,L}$ attaching $m$ half-lines at a suitable point. Before proving Proposition \ref{Prop 6.6} we establish the following lemma.

\begin{lemma}
\label{Lem 6.5}
Let $\G$ be a noncompact  graph with a finite number of edges. Let $\widetilde \G$ be a graph obtained from $\G$ attaching $m\ge 2$ half-lines $h_1,\dots, h_m$ at one of its points $\ppp$. If there exists a nodal ground state in $\MM_\lambda(\widetilde \G)$, then   
\begin{equation}
\label{follows}
\inf_{v\in \MM_{\lambda}(\widetilde \G)}J(v)\geq  \inf_{v\in \MM_{\lambda}(\G)}J(v).
\end{equation}
\end{lemma}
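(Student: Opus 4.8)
The plan is to prove the stronger statement that \emph{any} nodal ground state $u$ of $J$ on $\MM_\lambda(\widetilde\G)$ vanishes identically on each added half-line $h_1,\dots,h_m$; once this is established, \eqref{follows} follows at once. First I would normalise $u$ so that $u(\ppp)\ge0$ (replacing $u$ by $-u$ if necessary) and study $u$ on a half-line $h_j$, regarded as $[0,+\infty)$ with $\ppp$ at the origin. Since $u$ solves $u''+|u|^{p-2}u=\lambda u$ on $h_j$ and belongs to $H^1(0,+\infty)$, both $u$ and $u'$ tend to $0$ at infinity, so the conserved quantity $\tfrac12|u'|^2-\tfrac\lambda2|u|^2+\tfrac1p|u|^p$ is identically $0$ on $h_j$; the usual phase–plane picture then forces $u|_{h_j}$ to be a translated soliton profile, in particular of constant sign equal to $\operatorname{sign}u(\ppp)$ — so $u\equiv0$ on $h_j$ whenever $u(\ppp)=0$.

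If $u(\ppp)=0$, the previous observation makes $u$ supported inside $\G$, hence so are $u^{\pm}$, and all the integrals defining membership in $\NN$ over $\widetilde\G$ reduce to the corresponding integrals over $\G$; thus $u^{\pm}\in\NN_\lambda(\G)$, i.e.\ $u_{|\G}\in\MM_\lambda(\G)$, with $J(u_{|\G})=J(u)$. This gives $\inf_{v\in\MM_\lambda(\G)}J(v)\le J(u)=\inf_{v\in\MM_\lambda(\widetilde\G)}J(v)$, which is exactly \eqref{follows}. Consequently the whole problem comes down to excluding the case $u(\ppp)>0$.

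So suppose $u(\ppp)>0$. By the first step $u>0$ on every $h_j$, and since $m\ge2$ the positive nodal domain of $u$ — one of its two nodal domains, by Theorem~\ref{nodaldom} — contains $\ppp$ and at least two half-lines emanating from it. I would then argue that $\#\,(u^+)^{-1}(t)\ge2$ for a.e.\ $t\in(0,\|u^+\|_\infty)$: for $t<u(\ppp)$ this is clear because $u^+$, running continuously from $u(\ppp)$ to $0$ along each of the (at least two) half-lines, takes the value $t$ on each of them; for the remaining values of $t$ one follows two disjoint paths escaping to infinity through distinct half-lines, using that a solution of the equation is monotone between consecutive critical points on every edge. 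With this multiplicity in hand, \cite[Proposition~2.7]{DDGS} yields $J(u^+)\ge s_\lambda$. On the other hand $u^-\in\NN_\lambda(\widetilde\G)$ vanishes on the positive nodal domain, hence it is not a ground state, so $J(u^-)>\inf_{v\in\NN_\lambda(\widetilde\G)}J(v)$ by \cite[Proposition~3.3]{AST1}. Therefore $J(u)=J(u^+)+J(u^-)>s_\lambda+\inf_{v\in\NN_\lambda(\widetilde\G)}J(v)\ge\inf_{v\in\MM_\lambda(\widetilde\G)}J(v)$ by \eqref{MZlevel0} for $\widetilde\G$, contradicting the minimality of $u$.

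The step I expect to be the real obstacle is the multiplicity estimate $\#(u^+)^{-1}(t)\ge2$ in the last paragraph, for the levels $t\ge u(\ppp)$: this is precisely where one must exploit both the geometry of $\widetilde\G$ (two half-lines sharing the vertex $\ppp$ inside the positive nodal domain) and qualitative information on the minimiser (for instance that $\|u^+\|_\infty$ cannot be attained at a ``dead end'' of $\G$), much in the spirit of the level-set and rearrangement arguments underlying \cite[Proposition~2.7]{DDGS} and the treatment of assumption (H) in \cite{AST1}.
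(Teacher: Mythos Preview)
Your treatment of the case $u(\ppp)=0$ is the same as the paper's, and correct.  The genuine gap is in the case $u(\ppp)>0$: the multiplicity estimate $\#(u^+)^{-1}(t)\ge2$ for levels $t\ge u(\ppp)$ does \emph{not} follow from the argument you sketch, and in general it is simply false.  Once you know (as the paper shows, and as you could also show) that each $\phi_j:=u|_{h_j}$ is a tail of a soliton with $\phi_j'(\ppp)<0$, the function $u^+$ is bounded by $u(\ppp)$ on every $h_j$; the maximum of $u^+$ is therefore attained at some $x^*\in\G$.  Your ``two disjoint paths to infinity through distinct half-lines'' necessarily share the arc from $x^*$ to $\ppp$ inside $\G$, so for $t\in(u(\ppp),\|u^+\|_\infty)$ the intermediate value theorem only guarantees a preimage on that shared arc---one point, not two.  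Think of $\G$ with a pendant ending at a Neumann vertex where $u^+$ attains its maximum: near the top there is exactly one preimage.  So you cannot conclude $J(u^+)\ge s_\lambda$, and the contradiction you aim for is not available.  Your last paragraph correctly flags this as the obstacle; it is not a technicality but a real failure of the strategy.

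The paper does \emph{not} attempt to rule out $u(\ppp)>0$ (and Lemma~\ref{Lem 6.5} does not assert that nodal ground states vanish on the $h_j$).  Instead, in that case it compares Nehari parameters.  Writing $\phi_j\in\NN_{\theta_j}(h_j)$ and using $\phi_j'(\ppp)<0$ one computes $\theta_j<\lambda$; then, since $\wu^+\in\NN_\lambda(\widetilde\G)$ and $\lambda$ is a convex combination of the $\theta_j$'s and the parameter $\mu$ determined by $u^+|_{\G}\in\NN_\mu(\G)$, one gets $\mu>\lambda$.  Monotonicity in $\mu$ of the mixed level $\inf_{\MM_{\mu,\lambda}(\G)}J$ (the analogue of Remark~\ref{rem:Jincr}) then yields
\[
J(\wu)=\kappa\|\wu^+\|_p^p+\kappa\|\wu^-\|_p^p\ge \kappa\|u^+\|_{L^p(\G)}^p+\kappa\|u^-\|_{L^p(\G)}^p\ge\inf_{v\in\MM_{\mu,\lambda}(\G)}J(v)>\inf_{v\in\MM_\lambda(\G)}J(v),
\]
which is \eqref{follows}.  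This bypasses entirely the level-set/rearrangement route you were heading for.
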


\begin{proof} Let $\widetilde u$ be a nodal ground state on $\widetilde\G$ and assume without loss of generality that $\widetilde u (\ppp) \ge 0$.
Denote by $u$ the restriction of $\widetilde u$ on $\G$ and by $\phi_i$ the restriction of $\widetilde u$ to the half-line $h_i$, for $i=1, \ldots, m$.

If $\wu(\ppp) = 0$, since $\wu$ solves \eqref{eq:NLS}, each $\phi_i$ vanishes identically. Hence, $u \in \MM_{\lambda}(\G)$, $J(\wu)=J(u)$ and \eqref{follows} follows.

If $\wu(\ppp) > 0$, each $\phi_i$ coincides with a portion of the same soliton $\phi_\lambda$. With a slight abuse of notation we denote by $\phi_i'(\ppp)$ the outgoing derivative of $\phi_i$ at $\ppp$ along $h_i$.
Note that $\phi_i'(\ppp) < 0$ for every $i$. Indeed, if on the contrary we had for instance $\phi_1'(\ppp)\geq 0$, then 
the restriction of $\wu$ to the union of the $h_i$'s would contain at least one full soliton $\phi_\lambda$, so that $\|\wu\|_{L^p(\bigcup_i h_i)}^p \ge \|\phi_\lambda\|_p^p$. This would lead to 
\[
\inf_{v\in \MM_{\lambda}(\wG)}J(v)=J(\wu)=\kappa(\|\wu^+\|_p^p+\|\wu^-\|_p^p)
> \kappa  \|\phi_\lambda\|_p^p + \inf_{v\in \NN_{\lambda}(\wG)}J(v)
=s_\lambda + \inf_{v\in \NN_{\lambda}(\wG)}J(v)
\]
which contradicts \eqref{MZlevel0}.

As,  for all  $i\in\{1,\ldots, m\}$, $\phi_i$ is  a solution to \eqref{eq:NLS}, we have in particular $\phi_i\in \NN_{\theta_i}(h_i)$ with
\[
\theta_i
=
\frac{\int_{h_i} \phi_i^p\dx - \int_{h_i} (\phi_i')^2\dx}{\int_{h_i} \phi_i^2\dx}
=
\frac{\int_{h_i} \phi_i^p\dx +  \phi_i(\ppp) \phi_i'(\ppp) +\int_{h_i} \phi_i''\phi_i \dx}{\int_{h_i} \phi_i^2 \dx}
=
\frac{\lambda \int_{h_i} \phi_i^2 \dx +  \phi_i(\ppp) \phi_i'(\ppp) }{\int_{h_i} \phi_i^2\dx}
<\lambda
\]
since $\phi_i$ is a portion of $\phi_\lambda$ and $\phi_i'(\ppp) <0$.
Letting then $\mu$ be the number such that $u^+\in \NN_{\mu}(\G)$, there holds
\[
\lambda=\sum_{i=1}^m \frac{\|\phi_i\|_{L^2(h_i)}^2}{\|\wu^+\|_{L^2(\wG)}^2}\theta_i 
+ 
\frac{\|u^+\|_{L^2(\G)}^2}{\|\wu^+\|_{L^2(\wG)}^2}\mu
\]
which, combined with the preceding inequality, yields
$\mu>\lambda$.

Since, analogously to Remark \ref{rem:Jincr}, for a given $\lambda$ the map
\[
\mu \mapsto \inf_{v\in\MM_{\mu,\lambda}(\G)} \frac{1}{2} \|v'\|_{L^2(\G)}^2 +\frac{\mu}{2} \|v^+\|_{L^2(\G)}^2+  \frac{\lambda}{2} \|v^-\|_{L^2(\G)}^2- \frac{1}{p} \|v\|_{L^p(\G)}^p,
\]
where $\MM_{\mu, \lambda}(\G) := \left\{ v \in H^1(\G) \mid v^+ \in \mathcal{N}_{\mu}(\G) \mbox{ and }  v^-\in \mathcal{N}_{\lambda}(\G) \right\}$, is increasing, we have
\[
\inf_{v\in \MM_{\lambda}(\wG)}J(v)=J(\wu)= \kappa(\|\wu^+\|_p^p+\|\wu^-\|_p^p)\geq\kappa(\| u^+\|_p^p+\|u^-\|_p^p)
  \geq \inf_{v\in \MM_{\mu, \lambda}(\G)}J(v)
> \inf_{v\in \MM_{\lambda}(\G)}J(v),
\]
which concludes the proof. 
\end{proof}

\begin{proof}[Proof of Proposition \ref{Prop 6.6}]
Consider the graph $\G_{1,L}$ with $L \geq \overline L$ given by Proposition \ref{Prop 6.2}. 
On this graph, by Theorem \ref{baffolungo} and Proposition \ref{Prop 6.2}, 
we have a nodal ground state $u$  with $u^{-1}(0)=\{x_0\}$. 
Let now $\overline \G$ be the graph obtained from $\G_{1,L}$ attaching $m$ half-lines at the point $x_0$ and let $\overline u \in \MM_\lambda(\overline \G)$ be the function obtained extending $u$ by $0$ on each of the additional half-lines.

By  Theorem \ref{baffolungo},  nodal ground states exist  on $\overline \G$ and, by Lemma \ref{Lem 6.5},
\[
 \inf_{v\in \MM_{\lambda}(\G_{1,L})}J(v) = J(u)=J(\overline u)\geq \inf_{v\in \MM_{\lambda}(\overline \G)}J(v)\geq  \inf_{v\in \MM_{\lambda}(\G_{1,L})}J(v).
\]
This proves that $\overline u$ is a nodal ground state on $\bar \G$ and hence the existence of  a nodal ground state whose nodal set is given by $m$ half-lines attached at the same point.
\end{proof}
\medbreak

\noindent {\it Proof of 3):} here we prove the following statement. 

\begin{proposition}
\label{Prop 6.8}
Let $n \in \N$. There exist a graph $\overline \G$, a subset $Z \subseteq \V\setminus \V_{\infty}$ of its vertices of 
degree $1$ and a nodal ground state $u \in \MM_{\lambda,Z}(\overline \G)$ such that $u^{-1}(0)$ consists of $n$ line segments attached at the same point, each of length smaller than or equal to $\frac{\kappa}{2s_{\lambda}} \left(\frac{p\lambda}{2}\right)^{2/(p-2)}$.
\end{proposition}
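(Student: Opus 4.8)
The plan is to take $\overline\G$ to be the graph $\G_{1,L}$ of Proposition~\ref{Prop 6.2} with $L\ge\overline L$ — so that $\G_{1,L}$ carries a nodal ground state $u_0$ with $u_0^{-1}(0)=\{x_0\}$, $x_0$ an interior point of $e_1$ — to which we attach, at the single point $x_0$, $n$ line segments $s_1,\dots,s_n$ of lengths $a_1,\dots,a_n\le\overline a:=\frac{\kappa}{2s_\lambda}\bigl(\frac{p\lambda}{2}\bigr)^{2/(p-2)}$, and to let $Z$ consist of the $n$ free endpoints of these segments. The function $\overline u$ obtained by extending $u_0$ by $0$ on $s_1\cup\dots\cup s_n$ lies in $\MM_{\lambda,Z}(\overline\G)$ with $J(\overline u)=J(u_0)=\inf_{v\in\MM_\lambda(\G_{1,L})}J(v)$, so $\inf_{v\in\MM_{\lambda,Z}(\overline\G)}J(v)\le J(u_0)$; moreover $\overline\G$ keeps the two long pendants of $\G_{1,L}$, so part~2) of Theorem~\ref{baffolungo} gives a nodal ground state $\overline v$ on $\overline\G$. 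The whole proof then rests on the reverse inequality $J(\overline v)\ge J(u_0)$: once it holds, $\overline u$ realises $\inf_{v\in\MM_{\lambda,Z}(\overline\G)}J(v)$, hence is a nodal ground state, and since $\overline u^{-1}(0)=\{x_0\}\cup s_1\cup\dots\cup s_n=s_1\cup\dots\cup s_n$ (the $s_i$ all meet at $x_0$), its nodal set is exactly $n$ line segments attached at one point, of length $\le\overline a$.

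To prove $J(\overline v)\ge J(u_0)$ I would first replace $\overline v$ by $-\overline v$ if necessary so that $\overline v(x_0)\ge 0$, and record from \eqref{formJ}, \eqref{MZlevel0} and Proposition~\ref{2preim} that $\|\overline v'\|_{L^2(\overline\G)}^2<J(\overline v)/\kappa\le J(u_0)/\kappa\le 2s_\lambda/\kappa$. On each $s_i=[0,a_i]$ (with $0=x_0$) the restriction $\psi_i:=\overline v|_{s_i}$ solves the NLS ODE with $\psi_i(a_i)=0$, so the first integral $\tfrac12(\psi_i')^2-\tfrac\lambda2\psi_i^2+\tfrac1p|\psi_i|^p$ is constant along $s_i$ and equals $\tfrac12\psi_i'(a_i)^2\ge 0$; by Theorem~\ref{nodaldom} the function $\psi_i$ keeps one sign on $(0,a_i)$, since otherwise $s_i$ — which meets the rest of $\overline\G$ only at $x_0$ — would create a third nodal domain. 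The decisive estimate is then: \emph{if $\psi_i\not\equiv 0$, then $\psi_i'(0)<0$}. Indeed, if $\psi_i'(0)>0$ then $\psi_i$ attains a maximum at an interior point $x^\ast$, where the conserved quantity forces $|\psi_i(x^\ast)|^{p-2}\ge\frac{p\lambda}{2}$; and if $\psi_i'(0)=0$ then $\psi_i(x_0)\neq0$ by ODE uniqueness and evaluating the conserved quantity at $x_0$ gives $|\psi_i(x_0)|^{p-2}\ge\frac{p\lambda}{2}$. Either way $\|\psi_i\|_{L^\infty(s_i)}\ge(\frac{p\lambda}{2})^{1/(p-2)}$, while $\psi_i(a_i)=0$ yields $\|\psi_i\|_{L^\infty(s_i)}\le\sqrt{a_i}\,\|\psi_i'\|_{L^2(s_i)}\le\sqrt{a_i}\,\|\overline v'\|_{L^2(\overline\G)}<\sqrt{a_i}\sqrt{2s_\lambda/\kappa}\le\sqrt{\overline a}\sqrt{2s_\lambda/\kappa}=(\frac{p\lambda}{2})^{1/(p-2)}$, a contradiction. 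In particular no $s_i$ can carry a nonzero solution vanishing at both of its endpoints.

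Now the two remaining cases are routine. If $\overline v(x_0)=0$ then every $\psi_i\equiv0$ (no both-ends-vanishing bump survives), so $\overline v|_{\G_{1,L}}\in\MM_\lambda(\G_{1,L})$ and $J(\overline v)=J(\overline v|_{\G_{1,L}})\ge\inf_{v\in\MM_\lambda(\G_{1,L})}J(v)=J(u_0)$. If $\overline v(x_0)=c>0$ then each $\psi_i(0)=c>0$, so each $\psi_i\not\equiv0$ and, by the estimate above, $\psi_i'(0)<0$; moreover $\psi_i\ge0$ on $s_i$ (sign normalisation together with Theorem~\ref{nodaldom}), so $\overline v^-$ is supported in $\G_{1,L}$ with $\overline v^-\in\NN_\lambda(\G_{1,L})$. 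Setting $w^+:=\overline v^+|_{\G_{1,L}}$ ($\not\equiv0$ since $\overline v(x_0)>0$) and integrating the ODE on each $s_i$ gives $\|(w^+)'\|_2^2+\lambda\|w^+\|_2^2-\|w^+\|_p^p=c\sum_i\psi_i'(0)<0$, hence $n_\lambda(w^+)<1$. Then $\widetilde v:=\pr(w^+)+\overline v^-\in\MM_\lambda(\G_{1,L})$ — the two summands are compatible, both vanishing on $\{\overline v=0\}$ — and, using $n_\lambda(w^+)\le1$, $J(\widetilde v)=\kappa\bigl(n_\lambda(w^+)^p\|w^+\|_p^p+\|\overline v^-\|_p^p\bigr)\le\kappa\bigl(\|w^+\|_p^p+\sum_i\|\psi_i\|_p^p+\|\overline v^-\|_p^p\bigr)=J(\overline v)$, so $J(\overline v)\ge J(\widetilde v)\ge J(u_0)$. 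This yields $J(\overline v)\ge J(u_0)$ in every case and finishes the proof; the construction and the estimates are uniform in $n\ge1$.

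The one genuinely delicate step is the phase-plane estimate forcing $\psi_i'(0)<0$ on short segments, which is where the two-nodal-domain theorem and the exact value of $\overline a$ enter, calibrated against the a priori bound $\|\overline v'\|_{L^2}^2<2s_\lambda/\kappa$; everything else is either the construction above or a cut-and-reproject argument of the kind already used in Lemmas~\ref{Gl} and~\ref{Lem 6.5}.
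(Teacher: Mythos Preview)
Your proof is correct and follows essentially the same approach as the paper's, which packages the key inequality $\inf_{v\in \MM_{\lambda,Z}(\overline\G)}J(v)\geq \inf_{v\in \MM_{\lambda}(\G_{1,L})}J(v)$ into Lemma~\ref{Lem 6.7} (with the bound $J(\wu)\le 2s_\lambda$ coming from Theorem~\ref{baffolungo}) and then deduces Proposition~\ref{Prop 6.8} exactly as in Proposition~\ref{Prop 6.6}. Your inline treatment of the case $\overline v(x_0)>0$ via the direct computation $n_\lambda(w^+)<1$ is a cosmetic variant of the paper's convex-combination argument $\theta_i<\lambda\Rightarrow\mu>\lambda$ from Lemma~\ref{Lem 6.5}.

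One small wrinkle: the sentence ``In particular no $s_i$ can carry a nonzero solution vanishing at both of its endpoints'' does not follow from $\psi_i'(0)<0$ alone, since $\psi_i(0)=0$ together with $\psi_i'(0)<0$ is not yet a contradiction. You need one more line of the same phase-plane argument: if $\psi_i(0)=0$, $\psi_i'(0)<0$ and $\psi_i(a_i)=0$, Rolle gives an interior critical point $x^\ast$ with $\psi_i(x^\ast)\neq0$, and the conserved quantity at $x^\ast$ forces $|\psi_i(x^\ast)|\ge\bigl(\tfrac{p\lambda}{2}\bigr)^{1/(p-2)}$, yielding the same contradiction with the length bound. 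With this addition the case $\overline v(x_0)=0$ is complete.
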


Similarly to construction 2), the graph $\overline \G$ will be obtained from $\G_{1,L}$ attaching $n$ line segments at one of its points. To do this we need the next lemma.

\begin{lemma}
\label{Lem 6.7}
Let $\G$ be a noncompact graph with a finite number of edges.
Let $\widetilde \G$ be a graph obtained from $\G$ attaching
$n$ line segments $s_1$, \ldots, $s_n$ at one of its points $\ppp$.
Assume that each line segment has a length smaller than or equal to a number $S > 0$
and ends at a vertex with Dirichlet boundary condition. Suppose also that
$\wu$ and $S$ are such that $\wu$ is a nodal ground state on $\wG$ and that
$S\leq \frac{\kappa}{J(\wu)} \left(\frac{p\lambda}{2}\right)^{2/(p-2)}$. Then
\[
\inf_{v\in \MM_{\lambda,Z}(\wG)}J(v)\geq  \inf_{v\in \MM_{\lambda,Z}(\G)}J(v).
\]
\end{lemma}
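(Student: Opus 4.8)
The plan is to mimic the proof of Lemma~\ref{Lem 6.5}, with the half-lines replaced by the finite Dirichlet segments $s_1,\dots,s_n$; the new ingredient is the smallness hypothesis on $S$, which I would use to pin down the shape of $\wu$ on each $s_i$ via the conserved energy of \eqref{eq:NLS}. First I would set things up: up to replacing $\wu$ by $-\wu$ (still a nodal ground state, since $(-\wu)^\pm=-\wu^\mp$), assume $\wu(\ppp)\ge0$; write $u:=\wu_{\mid\G}$ and parametrize each $s_i$ by $[0,\ell_i]$ with $0\leftrightarrow\ppp$ and $\ell_i\leftrightarrow$ the Dirichlet endpoint, so that $\phi_i:=\wu_{\mid s_i}$ is a classical solution of \eqref{eq:NLS} on $(0,\ell_i)$ with $\phi_i(\ell_i)=0$ and $\ell_i\le S$. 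The first key step is an $L^\infty$ bound on the segments: since $\wu\in\NN_{\lambda,Z}(\wG)$, \eqref{formJ} gives $\|\wu'\|_{L^2(\wG)}^2\le J(\wu)/\kappa$ (using $\lambda>0$), so writing $\phi_i(x)=-\int_x^{\ell_i}\phi_i'$ and applying Cauchy--Schwarz,
\[
|\phi_i(x)|^2\le\ell_i\,\|\phi_i'\|_{L^2(s_i)}^2\le S\,\frac{J(\wu)}{\kappa}\le\Bigl(\tfrac{p\lambda}{2}\Bigr)^{2/(p-2)}\qquad\text{for all }x\in[0,\ell_i],
\]
whence $\|\phi_i\|_{L^\infty(s_i)}\le\phi_0:=\bigl(\tfrac{p\lambda}{2}\bigr)^{1/(p-2)}$ for every $i$.

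Next I would exploit the conserved quantity $E=\tfrac12(\phi_i')^2+V(\phi_i)$, with $V(t):=\tfrac1p|t|^p-\tfrac\lambda2 t^2$ (so that $V(t)>0\iff|t|>\phi_0$). If $\phi_i$ is sign-definite and not identically zero on a subinterval $[a,b]\subseteq[0,\ell_i]$ with $\phi_i(a)=\phi_i(b)=0$, then $E=\tfrac12\phi_i'(a)^2>0$ (else $\phi_i\equiv0$ by uniqueness for the Cauchy problem), and at the interior extremum we get $V\bigl(\|\phi_i\|_{L^\infty([a,b])}\bigr)=E>0$, forcing $\|\phi_i\|_{L^\infty([a,b])}>\phi_0$ and contradicting the bound above. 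From this I would deduce: \emph{(a)} if $\wu(\ppp)=0$ then $\phi_i\equiv0$ for all $i$ (apply the above to the interval bounded by $0$ and the first subsequent zero of $\phi_i$); \emph{(b)} if $\wu(\ppp)>0$ then $\phi_i$ has no zero in $(0,\ell_i)$, hence $\phi_i>0$ on $[0,\ell_i)$, and the outgoing derivative $\phi_i'(\ppp)$ along $s_i$ satisfies $\phi_i'(\ppp)\le0$ — for if $\phi_i'(\ppp)>0$ then $\phi_i$, which decreases back to $0$ at $\ell_i$, attains an interior maximum $\phi_1$ with $V(\phi_1)=E=\tfrac12\phi_i'(\ell_i)^2>0$, again giving $\phi_1>\phi_0$.

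To conclude I would treat the two cases. If $\wu(\ppp)=0$, then by (a) $\wu^\pm$ are supported in $\G$, so $u^\pm\in\NN_{\lambda,Z}(\G)$ (the Nehari identities persist, all norms being unchanged), $u^\pm\ne0$, hence $u\in\MM_{\lambda,Z}(\G)$ and $J(u)=\kappa\|u\|_{L^p(\G)}^p=\kappa\|\wu\|_{L^p(\wG)}^p=J(\wu)$ by \eqref{formJ}. If $\wu(\ppp)>0$, then by (b) each $\phi_i\ge0$, so $\wu^-$ lives on $\G$ and $u^-\in\NN_{\lambda,Z}(\G)\setminus\{0\}$; testing the equation for $\phi_i$ against $\phi_i$, summing over $i$ and using $\wu^+\in\NN_{\lambda,Z}(\wG)$, one gets
\[
\|(u^+)'\|_{L^2(\G)}^2+\lambda\|u^+\|_{L^2(\G)}^2-\|u^+\|_{L^p(\G)}^p=\wu(\ppp)\sum_{i=1}^n\phi_i'(\ppp)\le0,
\]
so $n_\lambda(u^+)\le1$ while $u^+(\ppp)=\wu(\ppp)>0$, and then $\widehat u:=\pr(u^+)+u^-\in\MM_{\lambda,Z}(\G)$ satisfies
\[
J(\widehat u)=\kappa\,n_\lambda(u^+)^p\|u^+\|_{L^p(\G)}^p+\kappa\|u^-\|_{L^p(\G)}^p\le\kappa\|u\|_{L^p(\G)}^p\le\kappa\|\wu\|_{L^p(\wG)}^p=J(\wu).
\]
In both cases $\inf_{v\in\MM_{\lambda,Z}(\wG)}J(v)=J(\wu)\ge\inf_{v\in\MM_{\lambda,Z}(\G)}J(v)$, as claimed.

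The hard part will be the conserved-energy rigidity: turning the smallness of $S$ — via the amplitude bound $\|\phi_i\|_{L^\infty(s_i)}\le\phi_0$ — into the dichotomy ``$\phi_i\equiv0$ or $\phi_i$ one-signed on $s_i$ with $\phi_i'(\ppp)\le0$'', being careful to cover every sign pattern of $\phi_i$ on the segment (the point being that a nonzero solution of \eqref{eq:NLS} vanishing at both endpoints of a subinterval must overshoot $\phi_0$). The rest is the bookkeeping of Lemma~\ref{Lem 6.5}: splitting $\wu^\pm$ over $\G$ and the segments, checking that the competitors are nonzero (which here is immediate from $\wu\in\MM_{\lambda,Z}(\wG)$ together with the location of $\wu^\pm$), and re-projecting $u^+$ onto $\NN_{\lambda,Z}(\G)$.
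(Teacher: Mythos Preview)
Your proof is correct and rests on the same two ingredients as the paper's: the Cauchy--Schwarz bound exploiting the smallness of $S$, and a phase plane/conserved energy analysis of \eqref{eq:NLS} on each segment. You only swap the order --- you first derive the uniform bound $\|\phi_i\|_{L^\infty(s_i)}\le\phi_0$ and then use the conserved energy to force the sign structure, whereas the paper first assumes $\phi_i'(\ppp)>0$, gets $\max\phi_i\ge\phi_0$ from the phase plane, and then derives a contradiction via Cauchy--Schwarz. Your treatment of the case $\wu(\ppp)=0$ is a bit more explicit than the paper's (which just refers back to Lemma~\ref{Lem 6.5}), and your final step --- testing the equation on each $s_i$ to obtain $n_\lambda(u^+)\le1$ and then projecting --- is a mild simplification of the paper's route through the mixed set $\MM_{\mu,\lambda}(\G)$ and its monotonicity in $\mu$.
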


\begin{proof}
We proceed in the same way as in the proof of Lemma \ref{Lem 6.5}. With no loss of generality, let $\wu(\ppp) \ge 0$. Denote by $u$ the restriction of $\wu$ to $\G$ and by $u_i$ the restriction of $u$ to $s_i$ for every $i$. Moreover, let $u_i'(\ppp)$ be the outward derivative of $u_i$ at $\ppp$ along $s_i$. Note again that, as $\wu$ is a nodal ground state, $u_i(\ppp) u_i'(\ppp)\leq 0$. Indeed, if this were not the case, we would have  $u_i'(\ppp)>0$ and, since $u_i$ satisfies the Dirichlet condition at the end of $s_i$, by a phase plane analysis we would have  $u_i(x_0) := \max u_i \ge \max \phi_{\lambda}=\left(\frac{p\lambda}{2}\right)^{1/(p-2)}$. Considering the first zero $x_1\in s_i$ of $u_i$ it would then follow
\[
\left(\frac{p\lambda}{2}\right)^{1/(p-2)}\leq u_i(x_0)-u_i(x_1)=\int_{x_1}^{x_0} u_i'(s)\,ds\le \sqrt{x_0-x_1}\|\wu'\|_2< \sqrt{SJ(\wu)/\kappa},
\]
which contradicts the choice of $S$. The rest of the proof follows as in that of Lemma \ref{Lem 6.5}.
\end{proof}

\begin{proof}[Proof of Proposition \ref{Prop 6.8}]
The proof is the same as the one of Proposition \ref{Prop 6.6}, using Lemma \ref{Lem 6.7} instead of Lemma \ref{Lem 6.5} and observing that, by Theorem \ref{baffolungo},    $J(\wu)\leq 2s_{\lambda}$.
\end{proof}

\begin{remark}
Graphs fulfilling Theorem \ref{nodalprop} can be obtained combining {\em ad libitum} the constructions 1), 2), 3). The general result is a graph as the one depicted in Figure \ref{fig:nodal_set}.
\end{remark}

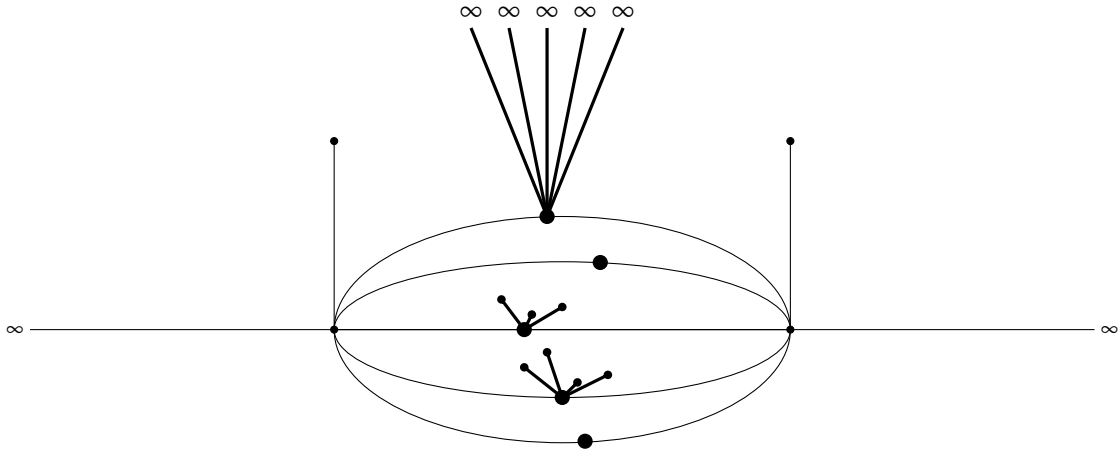
\begin{figure}[t]
	\centering
	\begin{tikzpicture}
		\draw (-7,0) -- (-3,0);
		\draw (3,0) -- (7,0);
		\node at (-7.2,0) [infinito] {$\scriptstyle\infty$};
		\node at (7.2,0) [infinito] {$\scriptstyle\infty$};
		\draw (0, 0) ellipse (3 and 0.);
		\draw (0, 0) ellipse (3 and 0.9);
		\draw (0, 0) ellipse (3 and 1.5);
		\draw (-3, 0) -- (-3, 2.5);
		\draw (3, 0) -- (3, 2.5);
		\node at (-3, 0) [nodo] {};
		\node at (-3, 2.5) [nodo] {};
		\node at (3, 0) [nodo] {};
		\node at (3, 2.5) [nodo] {};
		\fill (-0.2, 1.5) circle (0.1);
                \foreach \x in {-1, -0.5, 0, 0.5, 1}{
                  \draw[line width=1.2pt] (-0.2, 1.5) -- (\x-0.2, 4)
                  node[above]{$\infty$};
                }
		\fill (0.5, 0.89) circle (0.1);

		\fill (-0.5, 0) circle (0.1);
		\draw[line width=1.2pt, fill] (-0.5, 0) -- (-.8, 0.4) circle(1pt);
		\draw[line width=1.2pt, fill] (-0.5, 0) -- (-.4, 0.2) circle(1pt);
		\draw[line width=1.2pt, fill] (-0.5, 0) -- (0., 0.3) circle(1pt);
		\fill (0, -0.9) circle (0.1);
		\draw[line width=1.2pt, fill] (0, -0.9) -- (-0.5, -0.5) circle(1pt);
		\draw[line width=1.2pt, fill] (0, -0.9) -- (-0.2, -0.3) circle(1pt);
		\draw[line width=1.2pt, fill] (0, -0.9) -- (0.2, -0.7) circle(1pt);
		\draw[line width=1.2pt, fill] (0, -0.9) -- (0.6, -0.6) circle(1pt);
		
		\fill (0.3, -1.48) circle (0.1);
	\end{tikzpicture}
	\caption{Example of a graph as in Theorem \ref{nodalprop} hosting a nodal ground state whose nodal set (thick on the picture) is made of two isolated points,
	two groups of three line segments and a group of five half-lines.}
	\label{fig:nodal_set}
\end{figure}

\section*{Acknowledgements}
\noindent This work has been partially supported by the IEA Project ``Nonlinear Schr\"odinger Equations on Metric Graphs'' and by the INdAM GNAMPA Project 2023 ``Modelli nonlineari in presenza di interazioni puntuali''.
D.G. is an F.R.S.-FNRS Research Fellow.

\end{document}